\DeclareMathAlphabet{\mathscr}{T1}{calligra}{m}{n}
\theoremstyle{plain}
\newtheorem{theorem}{Theorem}
\newtheorem*{theorem*}{Theorem}
\newtheorem*{genericthm*}{\thistheoremname}
\newenvironment{namedtheorem}[1]
  {\renewcommand{\thistheoremname}{#1}%
   \begin{genericthm*}}
  {\end{genericthm*}}
\newtheorem{proposition}{Proposition}[section]
\newtheorem{corollary}[proposition]{Corollary}
\newtheorem{lemma}[proposition]{Lemma}
\theoremstyle{definition}
\newtheorem{definition}[proposition]{Definition}
\theoremstyle{remark}
\newtheorem{remark}[proposition]{Remark}
\newcommand{\thistheoremname}{}
\newtheorem*{genericrmk*}{\thistheoremname}
\newtheorem*{remark*}{Remark}
\newtheorem{example}[proposition]{Example}
\newtheorem*{example*}{Example}
\newtheorem*{exctd*}{\thistheoremname}
\newenvironment{examplectd}[1]
  {\renewcommand{\thistheoremname}{Example #1, continued}%
   \begin{exctd*}}
  {\end{exctd*}}
\newtheorem*{aside*}{Aside}
\newtheorem{question}[proposition]{Question}
\newtheorem*{question*}{Question}
\newtheorem*{conjecture*}{Conjecture}
\newtheorem*{exercise*}{Exercise}
\DeclareMathOperator{\Aut}{Aut}
\DeclareMathOperator{\rk}{rk}
\DeclareMathOperator{\Pic}{Pic}
\DeclareMathOperator{\TPic}{Pic^{\mathbb{T}}}
\DeclareMathOperator{\Picorb}{Pic}
\DeclareMathOperator{\TPicorb}{Pic^{\mathbb{T}}}
\DeclareMathOperator{\conv}{conv}
\DeclareMathOperator{\im}{im}
\DeclareMathOperator{\coker}{coker}
\newcommand{\ds}{/\!/\,}  
\title{Picard group and quantization of toric orbifolds}
\author{Thomas~Baier} 
\email{tbaier@math.tecnico.ulisboa.pt}
\author{Jos\'e~M.~Mour\~{a}o}
\email{jmourao@math.tecnico.ulisboa.pt}
\author{Jo\~{a}o~P.~Nunes}
\email{jpnunes@math.tecnico.ulisboa.pt}
\address{Departamento de Matem\'atica\\
Instituto Superior T\'ecnico\\
Av. Rovisco Pais\\
1049-001 Lisboa\\
Portugal}
\keywords{Toric orbifolds, Picard group, Quantization}
\subjclass[2010]{14M25,53D50,57R18,81S10}
\date{\today}
\begin{document}

\begin{abstract}
In the classical theory of toric manifolds polytopes appear in two guises -- as Newton polytopes of line bundles on the complex, and as moment polytopes on the symplectic side, the link between the two being established by the prequantizability condition on the cohomology class of the symplectic form.

Here we give a combinatorial description of the orbifold Picard group for complete toric orbifolds, with the aim of detailing the relation between complex and symplectic aspects in the orbifold setting. In particular this permits to illustrate the breakdown of identification of (orbifold) line bundles by their Chern class (or moment polytope up to translations in $\mathfrak{t}^\ast$), and non-constancy of $h^0$ on representatives of the same Chern class. As an application, we discuss symplectic reduction with respect to restrictions of the action to sub-tori, and the associated Bohr--Sommerfeld conditions in mixed polarizations.
\end{abstract}

\maketitle

\tableofcontents

\section{Introduction}

In this note, we determine a combinatorial description of the orbifold Picard group and dimension of the space of global sections of orbi-line bundles on toric orbifolds over projective toric varieties. 

We need a little notation to outline our main results: consider a fixed torus $\mathbb{T} \cong \mathbb{R}^n / \mathbb{Z}^n$ with Lie algebra $\mathfrak{t}$ and fundamental group $\pi_1(\mathbb{T}) = \mathfrak{t}_{\mathbb{Z}} \subset \mathfrak{t}$; its character lattice in the dual of the Lie algebra is denoted by $\mathfrak{t}^\ast_{\mathbb{Z}} \subset \mathfrak{t}^\ast$. The well-known correspondence between fans $\Sigma$ of convex cones in the Lie algebra $\mathfrak{t}$ and toric varieties $X_\Sigma$ is reviewed in Section \ref{subsection_toric-varieties}. The additional datum needed to specify an orbifold structure (making up what is called a \emph{stacky} or \emph{weighted fan}), which together with its map to the underlying (or ``coarse'') space $X_\Sigma$ we denote by $\pi: \mathcal{X}_{\Sigma,w}\to X_\Sigma$ is a collection of weights, i.e. positive integers $w_\rho \in \mathbb{N}_+$ associated to each ray $\rho \in \Sigma(1)$ in the fan.

Line bundles on the underlying space of an orbifold permit a generalization to \emph{orbi-line bundles} by considering quotients of line bundles on orbifold charts.
Pulling back line bundles from the coarse space clearly gives a subgroup of the orbifold Picard group, that fits into a commutative diagram
  \[
  \begin{tikzcd}
    & & 0 \arrow[d] & 0\arrow[d] & \\
    & 0 \arrow[r] \arrow[d] & \Pic X_\Sigma \arrow[d] \arrow[r, "\cong"] & H^2(X_\Sigma,\mathbb{Z}) \arrow[d] \arrow[r] & 0 \\
    0 \arrow[r] & \left( \Picorb \mathcal{X}_{\Sigma,w} \right)_{tor} \arrow[r] & \Picorb \mathcal{X}_{\Sigma,w} \arrow[r, "c_1"] & c_1(\Picorb \mathcal{X}_{\Sigma,w}) \arrow[r] & 0 
  \end{tikzcd}
  \]
In these terms our main objectives are to characterize the two contributors to the orbifold Picard group beyond the underlying space, that is
  \begin{itemize}
  \item[--] identify the rational Chern classes $c \in H^2(X_\Sigma, \mathbb{Q})$ in the image of $\Picorb \mathcal{X}_{\Sigma,w}$, and
  \item[--] identify the torsion subgroup $\left( \Picorb \mathcal{X}_{\Sigma,w} \right)_{tor}$.
  \end{itemize}
  Furthermore we tie this in with the symplectic picture by turning our attention to orbi-line bundles that have powers coming from very ample bundles on the coarse space, and
  \begin{itemize}
  \item[--] identify combinatorial data refining the Newton polytope up to translation classifying very ample line bundles on $X_\Sigma$, and
    \item[--] calculate the spaces of global sections of the orbi-line bundles.
  \end{itemize}
  We finish the paper with two applications to geometric quantization of orbifolds.

Let us outline our main results in some more detail. First, we identify linearized orbi-line bundles via a combinatorial description that uses covers $\widetilde{\mathbb{T}}_\sigma$ of the torus $\mathbb{T}$ associated in particular to the maximal cones $\sigma \in \Sigma(n)$; these are defined by specifying their fundamental group $\pi_1(\widetilde{\mathbb{T}}_\sigma)$ with a sublattice $\widetilde{\mathfrak{t}}_{\sigma,\mathbb{Z}} \subset \mathfrak{t}_{\mathbb{Z}}$ defined from the combinatorial data specifying the orbifold structure.

\begin{namedtheorem}{Theorem \ref{theorem_Picorb} (pg. \pageref{theorem_Picorb})} Equivalence classes of orbi-line bundles with a lift of the $\mathbb{T}$-action on $\mathcal{X}_{\Sigma,w}$ are classified equivalently by
\begin{itemize}
 \item either compatible collections of characters $m_\sigma \in \widetilde{\mathfrak{t}}^\ast_{\sigma,\mathbb{Z}}$ of the torus covers $\widetilde{\mathbb{T}}_\sigma \to \mathbb{T}$ associated to each maximal cone $\sigma \in \Sigma(n)$,
 \item or arbitrary collections of integers $l_\rho = m_\sigma(w_\rho\nu_\rho)$ associated to the rays $\rho \in \Sigma(1)$, where $\nu_\rho$ denotes the generator of the semigroup $\rho \cap \mathfrak{t}_{\mathbb{Z}}$.
\end{itemize}
\end{namedtheorem}

In the greater generality of smooth toric Deligne--Mumford stacks, Picard groups (or rather Picard stacks) are discussed in \cite{fantechi.mann.nironi:2010}. The results there \cite[Rmks. 4.5, 5.5]{fantechi.mann.nironi:2010} could be used to obtain an explicit combinatorial description similar to ours (see Remark \ref{rmk_Fantechi} below), but this still would come at the expense of using a technically much more demanding machinery.

\begin{namedtheorem}{Theorem \ref{theorem_h0} (pg. \pageref{theorem_h0})} The dimension of the space of global sections of an orbi-line bundle $\mathcal{L}_{\{m_\sigma\}_{\sigma \in \Sigma(n)}}$ equals the number of points in the character lattice $\mathfrak{t}^\ast_{\mathbb{Z}}$ that lie in the associated Newton polytope $P_{ \{m_\sigma \}_{\sigma \in \Sigma(n)} } =  \bigcap_{\sigma \in \Sigma(n)} (m_\sigma+\sigma\check{\ })$ (where $\sigma\check{\ }\subset \mathfrak{t}^\ast$ denotes the cone dual to $\sigma \subset \mathfrak{t}$)
\[
 h^0(\mathcal{L}_{\{m_\sigma\}_{\sigma \in \Sigma(n)}}) = \# P_{\{m_\sigma\}_{\sigma \in \Sigma(n)}} \cap \mathfrak{t}^\ast_{\mathbb{Z}} .
\]
\end{namedtheorem}

Lastly, we present two applications of our results: first we determine when a symplectic toric orbifold is pre-quantizable, and the data necessary beyond the Chern class of the symplectic form to determine a prequantization, then we check ``quantization commutes with reduction'' in this setting: notationally, let $\Sigma_P$ be the fan dual to a convex polytope $P \subset \mathfrak{t}^\ast$.

\begin{namedtheorem}{Theorem \ref{theorem_prequantizability} (pg. \pageref{theorem_prequantizability})} A symplectic toric orbifold $(\mathcal{X}_{\Sigma_P,w},\omega_{P,w})$
admits an orbi-line bundle whose rational Chern class coincides with the class of the symplectic form,
\[
 \exists \mathcal{L} \to \mathcal{X}_{\Sigma_P,w} \textrm{ such that } c_1(\mathcal{L}) = [\omega_{P,w}] \in H^2(X_{\Sigma_P},\mathbb{R}) ,
\]
if and only if there exists a translation of $P$ in $\mathfrak{t}^\ast$ that takes the image of each torus fixed point to an element of the corresponding character lattice $m_\sigma \in \widetilde{\mathfrak{t}}^\ast_{\sigma,\mathbb{Z}}$, so that (after possibly translating $P$)
  \[
    P = \conv \{ m_\sigma \in \widetilde{\mathfrak{t}}^\ast_{\sigma,\mathbb{Z}} \} .
  \]

In this case, non-equivalent orbi-line bundles representing the Chern class of $\omega_{P,w}$ form a torsor under the dual $\mathfrak{t}_{(\Sigma_P,w),\mathbb{Z}}^\ast/\mathfrak{t}_{\mathbb{Z}}^\ast$ of the orbifold fundamental group,
and the dimension of the space of global sections of the line bundle \emph{does} depend on this choice.
\end{namedtheorem}

When restricting the action in this setting to a sub-torus $\mathbb{T}_1 \subset \mathbb{T}$ (with moment map $\mu_1 = \pi \circ \mu$), we encounter the notion of \emph{Bohr--Sommerfeld fibers}, as those symplectic reductions
  \[
    \mathcal{X}_\alpha = \mu_1^{-1}(\alpha) / \mathbb{T}_1
  \]
  with orbi-integral symplectic form $[\omega_\alpha] \in c_1(\Picorb \mathcal{X}_\alpha)$.  
\begin{namedtheorem}{Theorem \ref{thm_qr-rq} (pg. \pageref{thm_qr-rq})}
For any choice of orbi-line bundle $\mathcal{L}_{\{m_\sigma\}} \to \mathcal{X}_{P,w}$ representing the class of the symplectic form $\omega_{P,w}$, there is a unique orbi-line bundle $\mathcal{L}_\alpha$ on the Bohr--Sommerfeld fiber $\mathcal{X}_\alpha$ descending in a $\mathbb{T}$-equivariant manner from $\mu_1^{-1}(\alpha)$. If all reductions at Bohr--Sommerfeld values are orbifolds, we obtain an isomorphism
  \[
  H^0(\mathcal{X}_{P,w},\mathcal{L}_{\{m_\sigma\}}) \cong
  \bigoplus_{\stackrel{\alpha \in P_1}{Bohr-Sommerfeld}}
  H^0(\mathcal{X}_\alpha,\mathcal{L}_\alpha) .
  \]
\end{namedtheorem}
It follows in particular that while the set of Bohr--Sommerfeld fibers depends only on the rational Chern class of the symplectic form, the dimensions of the associated quantum space \emph{does} depend on the orbi-line bundle representing this class.

Summing up, the most important changes in the combinatorial classification data that occur compared with the familiar situation of toric varieties are thus well illustrated already in a two-dimensional setting, as exemplified in Figure \ref{fig_introduction}.

\begin{figure}[!htb]
 \centering
\subfloat[LoF 1a][ \\ $\Sigma$ fan in $\mathfrak{t}$ \\ $\updownarrow$ \\ $X_\Sigma$ toric variety]{
\label{fig_1a} 
\begin{tikzpicture}
\node[anchor=center,inner sep=0] (image1a) at (0,0) {\includegraphics[width=0.24\textwidth]{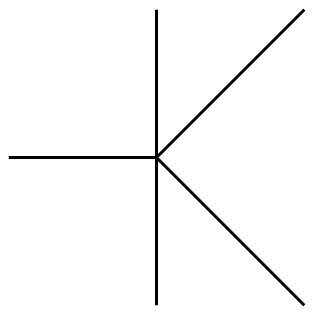}};
\begin{scope}[x={(image1a.east)-(image1a.west)},y={(image1a.north)-(image1a.south)}]
\node[label=right:{\small $\sigma_1$}] at (0.4,0){};
\node[label=right:{\small $\sigma_2$}] at (0,0.6){};
\node[label=center:{\small $\sigma_3$}] at (-0.4,0.4){};
\node[label=center:{\small $\sigma_4$}] at (-0.4,-0.4){};
\node[label=right:{\small $\sigma_5$}] at (0,-0.6){};
\end{scope}
\end{tikzpicture}}%
\subfloat[LoF 1b][ \\ $P \subset \mathfrak{t}^\ast$, $\Sigma$ refines $\Sigma_P$ \\
 $P = \conv\{m_{\sigma_i}\in\mathfrak{t}^\ast_{\mathbb{Z}}\}$ \\
 $\updownarrow$ \\
 $L_P\to X_\Sigma$ linearized line bundle]{
\label{fig_1b}
\begin{tikzpicture}
\node[anchor=center,inner sep=0] (image1b) at (0,0) {\includegraphics[width=0.24\textwidth]{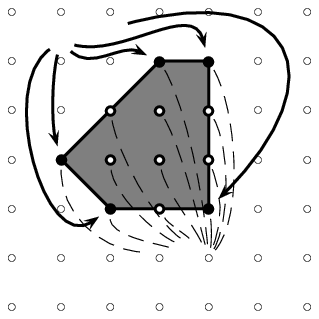}};
\begin{scope}[x={(image1b.east)-(image1b.west)},y={(image1b.north)-(image1b.south)}]
\node[] at (-0.6,0.85){\footnotesize $m_{\sigma_i} \in \mathfrak{t}^\ast_{\mathbb{Z}}$};
\node[] at (0.25,-0.7){\footnotesize $h^0(L_P)=12$};
\end{scope}
\end{tikzpicture}}%
\subfloat[LoF 1c][ \\ $\Sigma$ fan in $\mathfrak{t}$ \\
 $w:\Sigma(1)\to\mathbb{N}_+$ \\
 $\updownarrow$ \\
 $\mathcal{X}_{\Sigma,w}$ toric orbifold]{
\label{fig_1c}
\begin{tikzpicture}
\node[anchor=center,inner sep=0] (image1c) at (0,0) {\includegraphics[width=0.24\textwidth]{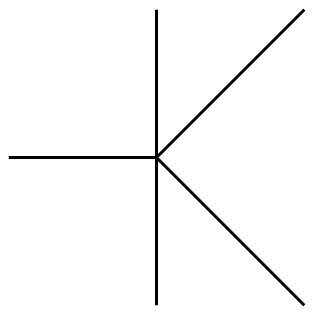}};
\begin{scope}[x={(image1c.east)-(image1c.west)},y={(image1c.north)-(image1c.south)}]
\node[label=right:{\small $\sigma_1$}] at (0.4,0){};
\node[label=right:{\small $\sigma_2$}] at (0,0.6){};
\node[label=center:{\small $\sigma_3$}] at (-0.4,0.4){};
\node[label=center:{\small $\sigma_4$}] at (-0.4,-0.4){};
\node[label=right:{\small $\sigma_5$}] at (0,-0.6){};
\node[rectangle,fill=white,fill opacity=1] at (0.3,0.3){\footnotesize $w=1$};
\node[rectangle,fill=white,fill opacity=1] at (0,0.8){\footnotesize $w=2$};
\node[rectangle,fill=white,fill opacity=1] at (-0.5,0){\footnotesize $w=2$};
\node[rectangle,fill=white,fill opacity=1] at (0,-0.8){\footnotesize $w=1$};
\node[rectangle,fill=white,fill opacity=1] at (0.3,-0.3){\footnotesize $w=1$};
\end{scope}
\end{tikzpicture}}%
\subfloat[LoF 1d][ \\ $P \subset \mathfrak{t}^\ast$, $\Sigma$ refines $\Sigma_P$ \\
 $P = \conv\{m_{\sigma_i}\in\widetilde{\mathfrak{t}}^\ast_{\sigma_i,\mathbb{Z}}\}$ \\
 $\updownarrow$ \\
 $\mathcal{L}_P\to \mathcal{X}_{\Sigma,w}$ \\
 linearized orbi-line bundle]{
\label{fig_1d}  
\begin{tikzpicture}
\node[anchor=center,inner sep=0] (image1d) at (0,0) {\includegraphics[width=0.24\textwidth]{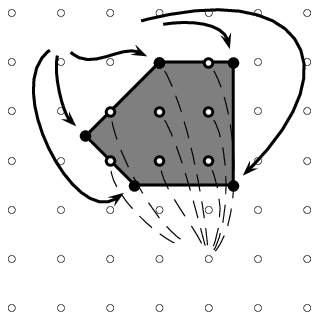}};
\begin{scope}[x={(image1d.east)-(image1d.west)},y={(image1d.north)-(image1d.south)}]
\node[] at (-0.6,0.85){\footnotesize $m_{\sigma_i} \in \widetilde{\mathfrak{t}}^\ast_{\sigma_i,\mathbb{Z}}$};
\node[rectangle,fill=white,fill opacity=1] at (0.25,-0.7){\footnotesize $h^0(\mathcal{L}_P)=8$};
\end{scope}
\end{tikzpicture}}%
\caption{Comparing line bundles on toric varieties and orbifolds}
\label{fig_introduction}   
\end{figure}

\begin{remark*}
  The main difference of this work to the substantial literature dealing with toric orbifolds and stacks and their quantization (see eg. \cite{borisov.chen.smith:2005,fantechi.mann.nironi:2010,lerman.malkin:2012,geraschenko.satriano:2015} and references therein) -- besides a ``technological downgrade'' in that we are not using the heavier machinery of stacks -- is the tightening of the relations between the description of the appropriate Picard group and the symplectic picture associated to the moment polytope. The natural and straightforward generalization of both determination of the space of sections and descent of line bundles to symplectic reductions in Theorems \ref{theorem_h0} and \ref{thm_qr-rq} illustrate this point.

  The topic of this note has also been dealt with from the stacky point of view in the pre-print \cite{sakai:2014}. It should be noted that our results on uniqueness of pre-quantization and dimension of the space of global sections differ from statements found there, cf. Remark \ref{rmk_Sakai}.
\end{remark*}

In a subsequent paper we will apply the present results in a generalization of the techniques \cite{baier.florentino.mourao.nunes:2011} to relate metric degenerations with distributional quantization in mixed polarizations.

\section{Generalities concerning toric varieties and orbifolds}

We need to fix some notation for the set-up we are concerned with.

\subsection{Toric varieties}\label{subsection_toric-varieties}

Though the symplectic variant makes an appearance in the last section, we are mainly concerned with toric varieties over the complex numbers in either the analytic or algebraic category. This is completely standard material, which we summarize here as briefly as possible.

For our purpose, it is convenient to \emph{fix} a torus $\mathbb{T} \cong \mathbb{R}^n / \mathbb{Z}^n$ and denote its Lie algebra by $\mathfrak{t}$, its dual by $\mathfrak{t}^\ast$, and the (dual) lattices of $U(1)$-subgroups and characters by $\mathfrak{t}_{\mathbb{Z}}$ and $\mathfrak{t}^\ast_{\mathbb{Z}}$, respectively; $\mathfrak{t}_{\mathbb{Z}}$ is also canonically identified with the fundamental group $\pi_1\mathbb{T}$. If we consider an element of the character lattice $m\in\mathfrak{t}_{\mathbb{Z}}^\ast$ as an actual character, we denote it by $\chi_m:\mathbb{T}\to U(1)$. Let $\mathbb{T}_{\mathbb{C}} \cong (\mathbb{C}^\ast)^n$ be the complexification of the torus.

\begin{definition} A ($\mathbb{T}_{\mathbb{C}}$-)\emph{toric variety} is an (irreducible and reduced) variety $X$ over $\mathbb{C}$ with an effective holomorphic action of $\mathbb{T}_{\mathbb{C}}$ which has a dense orbit.
\end{definition}

As is well-known, such varieties admit a completely combinatorial classification by collections of convex cones in the Lie algebra $\mathfrak{t}$.

\begin{definition} A \emph{fan} $\Sigma$ in $\mathfrak{t}$ is a collection of convex finitely generated cones $\tau \subset \mathfrak{t}$ which is closed under the inclusion of faces, and such that any two cones intersect in a common face.

A fan is \emph{rational} if every cone in it is generated by elements in $\mathfrak{t}_{\mathbb{Z}}$. It is \emph{simplicial} if every cone is so (i.e. a cone of dimension $k$ is generated by $k$ linearly independent rays). It is \emph{complete} if the union of all cones is all of $\mathfrak{t}$.
\end{definition}

We denote the collection of cones of dimension $k$ by $\Sigma(k)$, and in particular usually employ the letters $\sigma \in \Sigma(n)$ for maximal cones and $\rho \in \Sigma(1)$ for rays; we denote the generator of $\rho \cap \mathfrak{t}_{\mathbb{Z}}$ by $\nu_\rho$. Furthermore, for any cone $\tau\subset\mathfrak{t}$ we denote its dual cone by
\[
 \tau\check{\ } := \{ x \in \mathfrak{t}^\ast : \langle x, \tau \rangle_{\mathfrak{t}^\ast \times \mathfrak{t}} \geq 0 \} \subset \mathfrak{t}^\ast .
\]
\begin{definition} The \emph{dual fan} $\Sigma_P$ of a convex polytope $P$ is given by the collection of the cones dual to the cones describing $P$ locally at each vertex,
\[
 \tau \in \Sigma_P \quad \iff \quad \exists v\in P \textrm{ s.th. } P \cong v+\tau\check{\ } \textrm{ locally near } v. 
\]
\end{definition}

The relation between fans and toric varieties is described as follows:
\begin{theorem*}[{see e.g. \cite[in particular Thm. 1.5, 1.11, Cor. 2.16]{oda:1988}}] There is a one-to-one correspondence between complete normal $\mathbb{T}_{\mathbb{C}}$-toric varieties and complete rational fans $\Sigma$ in $\mathfrak{t}$.

The variety is projective if and only if the fan is dual to a convex polytope.
\end{theorem*}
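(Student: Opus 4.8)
The plan is to prove the three assertions -- construction of $X_\Sigma$ from a complete rational fan, reconstruction of the fan from a complete normal toric variety, and the projectivity criterion -- along the standard lines for which \cite{oda:1988} is a reference. For the construction direction I would glue affine charts: to each cone $\sigma \in \Sigma$ associate the semigroup $S_\sigma = \sigma\check{\ } \cap \mathfrak{t}^\ast_{\mathbb{Z}}$ of lattice points in the dual cone, which is finitely generated by Gordan's lemma, and set $U_\sigma := \mathrm{Spec}\,\mathbb{C}[S_\sigma]$. This is an affine $\mathbb{T}_{\mathbb{C}}$-variety with dense orbit $U_{\{0\}} = \mathbb{T}_{\mathbb{C}}$, and it is normal because the semigroup of lattice points of a rational cone is saturated, so $\mathbb{C}[S_\sigma]$ is integrally closed. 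The fan axioms are exactly what is needed to glue these charts: when $\sigma \cap \tau$ is a common face, $U_{\sigma\cap\tau}$ is a principal open subset of both $U_\sigma$ and $U_\tau$, the triple-overlap cocycle condition is automatic, and separatedness of the result is equivalent to pairwise intersections being \emph{common} faces.

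For the converse I would recover $\Sigma$ from the geometry of a complete normal toric variety $X$. The crucial input is Sumihiro's theorem, which uses normality and separatedness to cover $X$ by $\mathbb{T}_{\mathbb{C}}$-invariant affine opens; each such chart, being affine normal with a dense torus, equals $U_\sigma$ for a unique rational cone $\sigma$, recovered via the orbit--cone correspondence as the set of one-parameter subgroups $\lambda_v$, $v \in \mathfrak{t}_{\mathbb{Z}}$, for which $\lim_{z \to 0} \lambda_v(z)$ exists in the chart. Checking that these cones assemble into a fan inverse to the previous construction is then bookkeeping. Completeness enters through the valuative criterion of properness tested on one-parameter subgroups: $\lim_{z\to 0}\lambda_v(z)$ exists in $X_\Sigma$ iff $v$ lies in some cone, so $X_\Sigma$ is complete precisely when $|\Sigma| = \mathfrak{t}$.

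For the projectivity criterion I would pass through $\mathbb{T}$-invariant Cartier divisors and their support functions. Such a divisor $D$ determines a piecewise-linear function that is linear, given by some $m_\sigma \in \mathfrak{t}^\ast_{\mathbb{Z}}$, on each maximal cone $\sigma$, and the associated line bundle is ample exactly when this function is strictly convex with respect to $\Sigma$ (the toric Nakai criterion). A strictly convex support function yields the lattice polytope $P = \conv\{ m_\sigma \}$, whose dual fan is $\Sigma$, and whose lattice points provide a projective embedding; conversely the dual fan $\Sigma_P$ of any convex polytope carries the strictly convex support function coming from $P$. Hence $X_\Sigma$ is projective iff $\Sigma$ is dual to a polytope.

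I expect the two genuine obstacles to be the converse direction and the projectivity equivalence. The former rests entirely on Sumihiro's covering theorem, which is exactly where the normality and separatedness hypotheses are consumed -- dropping them allows non-separated or otherwise pathological torus varieties that do not arise from fans. The latter requires the toric form of the Nakai--Moishezon ampleness criterion, identifying ampleness of the line bundle with strict convexity of its support function and thereby with $\Sigma$ being the normal fan of a lattice polytope.
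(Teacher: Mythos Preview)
Your sketch is correct and follows the standard line of argument found in the cited reference \cite{oda:1988}. Note, however, that the paper does not give its own proof of this statement: it is recalled as a classical result with a citation, so there is no ``paper's proof'' to compare against. Your outline is precisely the content behind those citations (Gordan's lemma and gluing for the construction, Sumihiro's theorem for the converse, and the support-function/strict-convexity criterion for projectivity), and nothing further is expected here.
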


Below, it will become explicit that simplicial fans yield varieties with orbifold (also ``finite quotient'') singularities only.
As we will ``lift'' the glueing construction of a toric variety to orbifold structures below, we recall quickly the functorial correspondence
\begin{equation*}
 \textrm{cones in the fan } \tau \in \Sigma
 \leftrightarrow
 \textrm{ affine toric subvarieties } U_\tau \subset X_\Sigma ,
\end{equation*}
where $U_\tau$ is defined as the closure of the embedding of $\mathbb{T}_{\mathbb{C}}$ in affine space defined by the characters that generate the semigroup $\tau\check{\ }\cap\mathfrak{t}^\ast_{\mathbb{Z}}$,
\begin{equation*}
 U_\tau \cong \overline{\im \chi_{m_1} \times \cdots \times \chi_{m_j}} \subset \mathbb{A}^{j}_{\mathbb{C}},
 \quad \textrm{ where } \quad
 \tau\check{\ }\cap\mathfrak{t}^\ast_{\mathbb{Z}} = \langle m_1, \dots, m_j \rangle .
\end{equation*}
Thus in more categorical terms, the glueing construction exhibits the toric variety $X_\Sigma$ as an injective limit of the affine toric varieties $U_\tau$ indexed by the cones $\tau\in\Sigma$,
\[
 X_\Sigma = \lim_{\underset{\tau \in \Sigma}{\longrightarrow}} U_\tau .
\]
In Corollary \ref{corollary_inj_limit} below we will prove the analogous statement for toric orbifolds. The existence of an atlas with such special properties (consisting of a finite semi-lattice of charts which are unions of torus orbits) has similar consequences in both cases.

Another fact that follows from the description of the $U_\tau$ is the character decomposition of the structure sheaf into one-dimensional subspaces,
\[
 H^0(U_\tau,\mathcal{O}_{U_\tau}) = \bigoplus_{m \in \tau\check{\ }\cap\mathfrak{t}^\ast_{\mathbb{Z}}}  H^0(U_\tau,\mathcal{O}_{U_\tau})_{\chi_m} ,
\]
as well as the sheaf of non-vanishing sections (that necessarily restrict to monomials on the open orbit $U_0 \subset U_\tau$)
\[
  H^0(U_\tau,\mathcal{O}_{U_\tau}^\times) = \coprod_{m \in \tau^\perp \cap \mathfrak{t}^\ast_{\mathbb{Z}}}  H^0(U_\tau,\mathcal{O}_{U_\tau})_{\chi_m} \setminus \{ 0 \},
\]

\subsection{Orbifolds}\label{section_orbifolds}

Useful reviews of the basics of (effective) orbifolds and further references can be found for example in \cite{haefliger.salem:1991,godinho:2001,ross.thomas:2011,putman:2012}.

\begin{definition}[{cf. \cite{haefliger.salem:1991}, \cite[\S 2.3]{putman:2012}}] A \emph{developable} (or \emph{good} in Thurston's terminology) \emph{orbifold} is a Hausdorff topological space $U$ (called the \emph{underlying} or \emph{coarse} space) together with a representation
\[
 \widetilde{\Gamma} \circlearrowright \widetilde{U} \to \widetilde{U} / \widetilde{\Gamma} \cong U
\]
as quotient of a smooth manifold $\widetilde{U}$ by a discrete group $\widetilde{\Gamma}$ acting properly discontinuously; we denote this orbifold structure on the quotient by $\widetilde{U} \ds \widetilde{\Gamma} \to \widetilde{U}/\widetilde{\Gamma}$.

Two orbifold structures $\widetilde{\Gamma}_i \circlearrowright \widetilde{U}_i \to \widetilde{U}_i / \widetilde{\Gamma}_i \cong U$ are \emph{equivalent} if there is a common covering space $V$ of the $\widetilde{U}_i$ for which the transformation groups $\Gamma_i^V := \{ g \in \Aut V \vert \exists \widetilde{g}\in\widetilde{\Gamma}_i: \psi_i \circ g = \widetilde{g}\circ \psi_i \}$ coincide,
\begin{equation}\label{diag_equiv-developments}
 \begin{tikzcd}[row sep=small]
  & V \ar{rd}{\psi_2} \ar{ld}[swap]{\psi_1} & \\
 \widetilde{U}_1 \ar{rd} & & \widetilde{U}_2 \ar{ld} \\
 & U &
 \end{tikzcd}
 \quad \textrm{ s.th. } \quad \Gamma_1^V = \Gamma_2^V .
\end{equation}

An \emph{orbifold} is a Hausdorff topological space $X$ together with an open covering $U_i$ and a set of \emph{uniformizing charts}, i.e. developable orbifold structures $\widetilde{\Gamma}_i \circlearrowright \widetilde{U}_i \to U_i$
which are pairwise equivalent on intersections $U_i \cap U_j$.
\end{definition}

\begin{remark}
The original definition of orbifold (called \emph{V-manifolds} in \cite{satake:1956}) employs a different kind of charts called \emph{Satake charts} fitting together in a \emph{Satake atlas}: these consist of an open covering $U_i$ and finite quotients $\Gamma_i\circlearrowright V_i \to U_i \cong V_i/\Gamma_i$, where $V_i$ are subsets of $\mathbb{R}^n$ (or $\mathbb{C}^n$), and $\Gamma_i \subset \Aut V_i$ are finite subgroups (called \emph{isotropy} or \emph{stabilizer} groups), such that whenever $U_i \subset U_j$, we have injections $\Gamma_i \hookrightarrow \Gamma_j$ and $V_i \hookrightarrow V_j$ (generally non-unique) that ``glue up to the actions of the stabilizer groups''.

The equivalent definition in terms of uniformizing charts serves our purpose better, however, since such exist on the canonical torus-invariant atlas of a toric variety, cf. \S \ref{sect_developable} below.
\end{remark}

\begin{definition}[cf. {\cite[\S 2.3]{putman:2012}}]\label{dfn_orbi-line-bundle}
An (analytic/algebraic) \emph{orbi-line bundle} over a developable (quasi-projective) orbifold $\mathcal{X}$ is an equivariant (analytic/algebraic) $\widetilde{\Gamma}$-equivariant line bundle on some non-singular (quasi-projective) manifold $\widetilde{U}$ such that $\mathcal{X} \cong \widetilde{U} \ds \widetilde{\Gamma}$.

Over a general orbifold, an orbi-line bundle is given by a consistent collection of orbi-line bundles on some atlas of developable charts, $L_\alpha \in \Pic^{\widetilde{\Gamma}_\alpha} \widetilde{U}_\alpha$; to write the compatibility conditions out we make use of a simplifying fact that occurs for toric orbifolds: we suppose that the atlas $U_\alpha$ is indexed by a partially ordered set that is a semi-lattice under intersections, and that the equivalence of the restrictions of orbifold charts (\ref{diag_equiv-developments}) lifts these inclusions, i.e.
\[
 \begin{tikzcd}[row sep=small]
  & \widetilde{U}_{\alpha \cap \beta} \ar{rd}{\psi^{\alpha\cap\beta}_\beta} \ar{ld}[swap]{\psi^{\alpha\cap\beta}_\alpha} & \\
 \widetilde{U}_\alpha\left\vert_{U_{\alpha \cap \beta}}\right. \ar{rd} & & \widetilde{U}_\beta\left\vert_{U_{\alpha \cap \beta}}\right. \ar{ld} \\
 & U_{\alpha \cap \beta} = U_\alpha \cap U_\beta &
 \end{tikzcd} .
\]
The glueing is specified by $\widetilde{\Gamma}_{\alpha \cap \beta}$-equivariant bundle isomorphisms
\[
 \phi_{\alpha, \alpha\cap\beta}:
 \psi^{\alpha\cap\beta}_\alpha{}^\ast L_\alpha\left\vert_{U_{\alpha \cap \beta}}\right.
 \to
 L_{\alpha\cap\beta}
\]
which give rise to transition functions subject to the usual cocycle conditions
\[
 \widetilde{\phi}_{\alpha \beta} = \phi_{\beta, \alpha\cap\beta}^{-1} \circ \phi_{\alpha, \alpha\cap\beta}
 \quad \textrm{ s.th. }\quad
 \psi^{\alpha\cap\beta\cap\gamma}_{\beta\cap\gamma}{}^\ast \widetilde{\phi}_{\beta \gamma}
 \circ
 \psi^{\alpha\cap\beta\cap\gamma}_{\alpha\cap\beta}{}^\ast \widetilde{\phi}_{\alpha \beta}
 =
 \psi^{\alpha\cap\beta\cap\gamma}_{\alpha\cap\gamma}{}^\ast \widetilde{\phi}_{\alpha \gamma}
\]
on the developable charts on triple intersections $\widetilde{U}_{\alpha \cap \beta \cap \gamma} \to U_{\alpha \cap \beta \cap \gamma}$.
\end{definition}

\begin{definition}[{\cite[\S 1.2]{haefliger.salem:1991}}] A \emph{$G$-action} on the orbifold $\mathcal{X}\to X$ is a $G$-action on the underlying space $G \circlearrowright X$ such that there is a developable atlas  $\widetilde{\Gamma}_{\alpha} \circlearrowright \widetilde{U}_{\alpha} \to U_{\alpha}$ over an atlas of $G$-invariant charts $U_\alpha \subset X$; as before the uniformizations $\widetilde{U}_\alpha$ come with canonically defined transformation groups
\[
 \widetilde{G}_{\alpha} := \{ \widetilde{g} \in \Aut \widetilde{U}_{\alpha} \vert \exists g\in G \textrm{ s.th. } \pi \circ \widetilde{g} = g \circ \pi \} 
\]
that fit into extensions $1 \to \widetilde{\Gamma}_{\alpha} \to  \widetilde{G}_{\alpha} \to G \to 1$.
\end{definition}

It is convenient for us to spell this out in the particular case of lifting a torus action to the total space of an orbi-line bundle:

\begin{definition} A \emph{$\mathbb{T}$-linearized} orbi-line bundle over a developable quasi-projective orbifold with a $\mathbb{T}$-action $\mathbb{T} \circlearrowright \mathcal{X} \cong \widetilde{U}/\widetilde{\Gamma}$ is a $\mathbb{T}_{\widetilde{U}}$-equivariant line bundle on $\widetilde{U}$ with the other conditions as before (where as above $\mathbb{T}_{\widetilde{U}} := \{ s\in\Aut \widetilde{U} \vert \exists t\in\mathbb{T} \textrm{ s.th. } \pi \circ s = t \circ \pi \}$).

Over a developable atlas as above, the bundle isomorphisms $\phi^{\alpha \cap \beta}_\alpha$ and transition functions $\widetilde{\phi}_{\alpha \beta}$ are required to be $\mathbb{T}_{\widetilde{U}_{\alpha\cap\beta}}$-equivariant.
\end{definition}

\begin{definition}\label{dfn_Picorb} Denote by $\Picorb \mathcal{X}$ the Picard groups of equivalence classes of orbi-line bundles on $\mathcal{X}$. The \emph{first Chern class} of an orbi-line bundle $\mathcal{L}$ is defined as
\[
 c_1(\mathcal{L}) := \frac{1}{d} c_1(L) \in H^2(X,\mathbb{Q}) = \Pic X \otimes \mathbb{Q}, \textrm{ where } \pi^\ast L = \mathcal{L}^d \textrm{ and } L\in\Pic X.
\]
\end{definition}
\begin{remark*}
The rational Chern class is well-defined for any orbi-line bundle since only a finite number of strata with finite isotropy groups occur, and hence a finite tensor power of any orbi-line bundle will have trivial characters at all of them and descend to a line bundle on $X$. Furthermore, if two pull-backs give rise to powers of an orbi-line bundle, then an appropriate product of them will be trivial as orbi-line bundle and thus have a trivializing section. This however descends to the coarse space, which guarantees the two rational Chern classes for $\mathcal{L}$ to coincide.
\end{remark*}

\begin{definition}[cf. \cite{davis:2011}, also for a discussion of alternative definitions]\label{dfn_orbi_pi1} An \emph{orbifold covering map} is one that locally on developable charts $\widetilde{\Gamma} \circlearrowright \widetilde{U} \to \widetilde{U} / \widetilde{\Gamma}$ is given by intermediate quotients of equivalent orbifold structures on some cover $V\to\widetilde{U}$.

The \emph{orbifold fundamental group} $\pi_1^{orb}\mathcal{X}$ is the group of deck transformations of the universal covering orbifold.
\end{definition}

This definition of course relies on an existence theorem that we do not state -- in our setting we will see an explicit construction of the orbifold universal cover below. The two facts on orbifold fundamental groups we will rely upon below are the following (see \cite[\S\S 1.1, 5.1]{haefliger.salem:1991}):
\begin{itemize}
 \item For a developable orbifold $\mathcal{X} = \widetilde{U} \ds \widetilde{\Gamma}$, we have a short-exact sequence
\[
 1 \to \pi_1 \widetilde{U} \to \pi_1^{orb}\mathcal{X} \to \widetilde{\Gamma} \to 1 ;
\]
in particular its orbifold universal cover is the universal cover of $\widetilde{U}$.
 \item
The orbifold fundamental group is a quotient of the (usual) fundamental group of the \emph{regular part} $X^{reg}$ (i.e. the set of smooth points with trivial orbifold stabilizer group) of the coarse space.
\end{itemize}

\section{Orbifold structures on simplicial projective toric varieties}

Toric orbifolds have been discussed from two technical view-points, either as generalizations of manifolds (note that the symplectic orbifolds of \cite{lerman.tolman:1997} all admit complex structures, cf. \cite[Thm. 1.7]{lerman.tolman:1997}) or as special Deligne--Mumford stacks (in particular by \cite{borisov.chen.smith:2005,iwanari:2009,fantechi.mann.nironi:2010}). The following classification theorem provides us with the combinatorial codification of toric orbifolds.

\begin{definition} A \emph{stacky} (or \emph{weighted}) fan $(\Sigma,w)$ is a (simplicial, complete, rational) fan $\Sigma$ with a weight function $\Sigma(1) \ni \rho \mapsto w_\rho \in \mathbb{N}_+$.
\end{definition}
\begin{theorem*}[{\cite[Thm. 7.17]{fantechi.mann.nironi:2010}}] Any toric orbifold $\mathcal{X}$ (with coarse space $X_{\Sigma}$ the complete simplicial toric variety with corresponding fan $\Sigma$) is isomorphic to the smooth toric Deligne--Mumford stack obtained from some stacky fan $(\Sigma,w)$.
\end{theorem*}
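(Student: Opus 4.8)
The plan is to reconstruct the stacky fan directly from the orbifold, working on the torus-invariant affine atlas, and then to verify that the combinatorial data determines $\mathcal{X}$ up to isomorphism. Throughout I use that a toric orbifold comes equipped with a lift of the $\mathbb{T}_{\mathbb{C}}$-action to its orbifold structure, and that $X_\Sigma$ is complete and simplicial.

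First I would analyze the local charts. Fix a maximal cone $\sigma \in \Sigma(n)$ with primitive ray generators $\nu_1,\dots,\nu_n$ (linearly independent, since $\Sigma$ is simplicial). Restricting the orbifold structure to the affine toric chart $U_\sigma \subset X_\Sigma$ produces a developable chart $\widetilde U_\sigma \ds \widetilde\Gamma_\sigma \to U_\sigma$ to which the torus action lifts; the transformation group $\mathbb{T}_{\widetilde U_\sigma}$ is then an extension of $\mathbb{T}_{\mathbb{C}}$ by the finite abelian group $\widetilde\Gamma_\sigma$, hence itself a torus $\widetilde{\mathbb{T}}_{\sigma,\mathbb{C}}$, and $\widetilde U_\sigma$ is a smooth affine toric variety for it with dense orbit. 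Viewing its defining cone inside $\mathfrak{t}$ via the isogeny $\widetilde{\mathbb{T}}_{\sigma,\mathbb{C}} \to \mathbb{T}_{\mathbb{C}}$, it must be $\sigma$ itself; smoothness then forces the primitive generators of the rays of $\sigma$ relative to the cover lattice $\widetilde{\mathfrak{t}}_{\sigma,\mathbb{Z}} \subset \mathfrak{t}_{\mathbb{Z}}$ to form a $\mathbb{Z}$-basis of $\widetilde{\mathfrak{t}}_{\sigma,\mathbb{Z}}$. Since the generator on ray $\rho_i$ lies in $\mathfrak{t}_{\mathbb{Z}}$ and $\nu_i$ is primitive there, it equals $w_i^\sigma \nu_i$ for a unique $w_i^\sigma \in \mathbb{N}_+$, so that $\widetilde{\mathfrak{t}}_{\sigma,\mathbb{Z}} = \bigoplus_i \mathbb{Z}\, w_i^\sigma \nu_i$ and $\widetilde\Gamma_\sigma \cong \mathfrak{t}_{\mathbb{Z}}/\widetilde{\mathfrak{t}}_{\sigma,\mathbb{Z}}$.

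Next I would globalize the weights: one must check that $w_\rho^\sigma$ depends only on the ray $\rho$, not on the maximal cone $\sigma \ni \rho$. The key is that $w_\rho$ is intrinsic --- it is the order of the (cyclic) isotropy group at the generic point of the prime divisor $D_\rho$, realized in any developable chart around that point as a transverse cyclic quotient singularity of that order. Concretely, for $\sigma, \sigma' \supset \rho$ the charts $\widetilde U_\sigma$, $\widetilde U_{\sigma'}$ restrict to equivalent orbifold structures over $U_\sigma \cap U_{\sigma'} \supset U_\rho$; comparing their ray data along $\rho$ through a common refinement as in (\ref{diag_equiv-developments}) forces $w_\rho^\sigma = w_\rho^{\sigma'}$. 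This yields a well-defined weight function $w:\Sigma(1)\to\mathbb{N}_+$, i.e.\ a stacky fan $(\Sigma,w)$, whose associated toric orbifold $\mathcal{X}_{\Sigma,w}$ has by construction uniformizing charts equivalent to the $\widetilde U_\sigma \ds \widetilde\Gamma_\sigma$ over each $U_\sigma$. To conclude $\mathcal{X}\cong\mathcal{X}_{\Sigma,w}$ I would invoke that the $U_\sigma$ ($\sigma\in\Sigma(n)$) together with their pairwise intersections (again torus-invariant affines $U_\tau$, $\tau$ a common face) form an atlas whose nerve is the face semilattice of $\Sigma$, and that equivalence of uniformizing charts on overlaps is a property, not glueing data; hence a $\mathbb{T}_{\mathbb{C}}$-compatible orbifold structure over $X_\Sigma$ is pinned down by its restrictions to the $U_\sigma$, exactly as $X_\Sigma$ is the injective limit of the $U_\tau$ (the orbifold analogue being Corollary \ref{corollary_inj_limit}).

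The hard part will be the globalization step: ensuring the local multiplicities actually patch to a single function on $\Sigma(1)$, i.e.\ excluding a toric orbifold whose transverse isotropy along a divisor is recorded incompatibly from its two sides. Developability is what makes this work --- the equivalence relation on uniformizing charts is rigid enough to collapse the gerbe-type ambiguities that force the stacky-fan literature to pass to the full Picard stack. A subsidiary technical point is the local claim that a smooth affine toric variety isogenous to $U_\sigma$ over $\mathbb{T}_{\mathbb{C}}$ is necessarily the standard chart for the sublattice $\bigoplus_i \mathbb{Z}\, w_i^\sigma\nu_i$; this is the classification of smooth affine toric varieties together with the observation that cones with a prescribed underlying subset of $\mathfrak{t}$ differ only in the choice of lattice.
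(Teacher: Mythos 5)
This statement is not proved in the paper at all: it is imported verbatim from Fantechi--Mann--Nironi \cite[Thm.\ 7.17]{fantechi.mann.nironi:2010}, and the paper only supplies the \emph{converse} direction (the construction of $\mathcal{X}_{\Sigma,w}$ from a stacky fan via the developable charts $\widetilde{U}_\tau \ds \widetilde{\Gamma}_\tau$ in Lemma \ref{lemma_developable} and Corollary \ref{corollary_inj_limit}). Your argument is essentially that construction run backwards, which is the natural elementary route and is consistent in spirit with the paper's stated aim of avoiding stack machinery; the intrinsic identification of $w_\rho$ as the order of the isotropy group at the generic point of $D_\rho$ is the right way to globalize the weights, and your observation that for \emph{effective} orbifolds chart equivalence over an invariant atlas is a property rather than gluing data is exactly what makes the final patching step work.

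There is, however, one concrete non sequitur in your local analysis: ``an extension of $\mathbb{T}_{\mathbb{C}}$ by the finite abelian group $\widetilde\Gamma_\sigma$, hence itself a torus.'' Neither half of this is free. An extension of a torus by a finite abelian group need not be a torus (e.g.\ $\mathbb{T}_{\mathbb{C}}\times\mathbb{Z}/2$), the missing ingredient being connectedness of $\mathbb{T}_{\widetilde U_\sigma}$; and abelianness of the deck group $\widetilde\Gamma_\sigma$ is not part of the definition of a developable chart. Both are repaired by looking at the preimage $\widetilde O$ of the open orbit $O\cong(\mathbb{C}^\ast)^n$: since the orbifold is effective, the isotropy is trivial over $O$, so $\widetilde O\to O$ is an honest covering; since $\widetilde U_\sigma\setminus\widetilde O$ is a proper analytic subset of the connected manifold $\widetilde U_\sigma$, $\widetilde O$ is connected, hence $\widetilde\Gamma_\sigma$ is a quotient of $\pi_1(O)\cong\mathbb{Z}^n$ (so abelian) and $\widetilde O$ is a connected covering group of $\mathbb{T}_{\mathbb{C}}$, i.e.\ the covering torus you want, with $\mathbb{T}_{\widetilde U_\sigma}$ identified with it. You should also make explicit why a developable chart with lifted action exists over \emph{all} of $U_\sigma$: the definition of a torus action on an orbifold provides invariant developable charts, and any invariant open set containing the fixed point $x_\sigma$ already contains $U_\sigma$ (every orbit in $U_\sigma$ has $x_\sigma$ in its closure), while restriction of a developable structure to an open subset stays developable. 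With those two points filled in, your proof goes through.
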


In what follows we describe this toric orbifold $\mathcal{X}_{\Sigma,w}\to X_\Sigma$ by an elementary glueing construction. Already in the simplest example, the weighted projective line, it is clear that there is, in general, no Satake atlas of the orbifold structure supported on the usual atlas of affine toric subvarieties -- as all points in the open orbit have trivial stabilizer group, the Satake chart on it is the identity, so there will be no immersion that lifts the inclusion into a (ramified) vertex chart. Using orbifold charts on non-invariant domains on the other hand is cumbersome for the description of linearized orbi-line bundles. The way out is indicated by  the fact that the restriction of the orbifold structure to each affine toric subvariety is a developable orbifold.

The central enhancement of the combinatorial data in the orbifold setup is the use of weights and ray generators to define a family of torus covers $\widetilde{\mathbb{T}}_\tau \to \mathbb{T}$ indexed by the cones in $\Sigma$, such that inclusions of cones $\tau_1 \subset \tau_2$ induce surjections $\widetilde{\mathbb{T}}_{\tau_1} \to \widetilde{\mathbb{T}}_{\tau_2}$. To specify $\widetilde{\mathbb{T}}_\tau$ together with the claimed homomorphisms, it is of course sufficient to give the fundamental groups $\pi_1\widetilde{\mathbb{T}}_{\tau} \subset \pi_1\mathbb{T}$.

\begin{definition} For each maximal cone $\sigma \in \Sigma(n)$, the cover $\widetilde{\mathbb{T}}_\sigma \to \mathbb{T}$ is defined by specifying
\[
 \pi_1\widetilde{\mathbb{T}}_{\sigma} =
 \widetilde{\mathfrak{t}}_{\sigma,\mathbb{Z}} := \langle w_\rho \nu_\rho \vert \forall \rho \subset \sigma, \rho \in \Sigma(1) \rangle \subset \mathfrak{t}_{\mathbb{Z}} \cong \pi_1\mathbb{T}.
\]
For an arbitrary cone $\tau \in \Sigma$, we use the same construction for
\[
 \pi_1\widetilde{\mathbb{T}}_{\tau} =
 \widetilde{\mathfrak{t}}_{\tau,\mathbb{Z}} := \bigcap_{\overset{\sigma \in \Sigma(n)}{\sigma \supset \tau}}  \widetilde{\mathfrak{t}}_{\sigma,\mathbb{Z}} .
\]
For each $\tau$, we denote the kernel of these torus covers by $\widetilde{\Gamma}_\tau := \ker \widetilde{\mathbb{T}}_\tau \to \mathbb{T}$.
\end{definition}

It is important not to confuse the groups $\widetilde{\Gamma}_\tau$ that occur in the toric presentation of the orbifold structure with the orbifold structure groups $\Gamma_\tau$ at any point of the relative interior of the torus orbit labelled by $\tau$. Only for maximal cones $\sigma$ we have that $\widetilde{\Gamma}_\sigma$ equals the stabilizer of the corresponding torus fixed point.

\begin{remark*}
Sublattices of the torus' fundamental group associated to the cones also appear in the literature on toric stacks \cite[\S 4]{borisov.chen.smith:2005}, \cite[\S 2]{goldin.harada.johannsen.krepski:2016}; these coincide with $\widetilde{\mathfrak{t}}_{\tau,\mathbb{Z}}$ only for \emph{maximal} cones $\tau = \sigma$. For the smaller cones, they are distinct since they are more directly related to the orbifold inertia groups over the corresponding torus orbits.
\end{remark*}

\begin{example}\label{ex_weightedP1} While in one dimension there is a unique complete toric variety $\mathbb{P}^1$, in the orbifold setting we encounter a whole family $\mathbb{P}^1_{n,m}$ of weighted projective lines depending on two non-negative integers that give the order of the (cyclic) stabilizer groups at the two torus fixed points. The corresponding weighted fan $\Sigma =\{ \sigma_{\pm} = \pm \mathbb{R}_{+}, 0 \},w = \{ w_- = n, w_+ = m \}$ and the lattices derived from it for the example $n=6,m=4$ are shown in Figure \ref{fig_weightedP1}.

\begin{figure}[!htb]
 \centering
\begin{tikzpicture}
\node[anchor=center,inner sep=0] (image2) at (0,0) {\includegraphics[width=\textwidth]{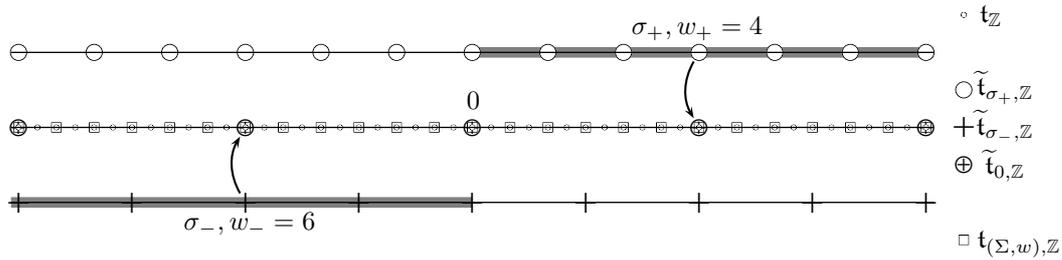}};
\begin{scope}[x={(image2.east)-(image2.west)},y={(image2.north)-(image2.south)}]
\node[] at (0.85,0){\small $\widetilde{\mathfrak{t}}_{\sigma_-,\mathbb{Z}}$};
\node[] at (0.85,0.3){\small $\widetilde{\mathfrak{t}}_{\sigma_+,\mathbb{Z}}$};
\node[] at (0.85,-0.3){\small $\widetilde{\mathfrak{t}}_{0,\mathbb{Z}}$};
\node[] at (0.825,0.85){\small $\mathfrak{t}_{\mathbb{Z}}$};
\node[] at (0.875,-0.9){\small $\mathfrak{t}_{(\Sigma,w),\mathbb{Z}}$};
\node[] at (-0.1,0.2){\small $0$};
\node[] at (0.3,0.75){\small $\sigma_+,w_+=4$};
\node[] at (-0.5,-0.75){\small $\sigma_-,w_-=6$};
\end{scope}
\end{tikzpicture}%
\caption{Weighted fan and fundamental group of covering tori of $\mathbb{P}^1_{6,4}$}\label{fig_weightedP1}
\end{figure}

In this example
\[
 \widetilde{\mathfrak{t}}_{0,\mathbb{Z}} = \widetilde{\mathfrak{t}}_{\sigma_{-},\mathbb{Z}} \cap \widetilde{\mathfrak{t}}_{\sigma_{+},\mathbb{Z}}
 \ \iff \ 
 12\mathbb{Z} = 6\mathbb{Z} \cap 4\mathbb{Z}
\]
under the natural identification $\mathfrak{t}_{\mathbb{Z}} = \mathbb{Z}$. (The lattice $\mathfrak{t}_{(\Sigma,w),\mathbb{Z}}$ is related to the orbifold fundamental group and will be introduced in general below).
Using this family of sub-lattices $\widetilde{\mathfrak{t}}_{\tau,\mathbb{Z}}$ in general, we next construct toric ``developments'' of the orbifold structure over the corresponding affine subvarieties.
\end{example}

\subsection{Affine toric orbifolds are developable}\label{sect_developable}
Each cone $\tau \in \Sigma$ determines an affine toric subvariety $U_\tau \in X_\Sigma$, so that $X_\Sigma$ is obtained by glueing these; in this paragraph we describe the orbifold structure of $\mathcal{X}_{\Sigma,w}$ by lifting this glueing construction to developable orbifold structures.

\begin{lemma}\label{lemma_developable} For every cone $\tau \in \Sigma$, considering it as defining an affine toric variety with respect to the lattices $\widetilde{\mathfrak{t}}_{\tau,\mathbb{Z}} \subset \mathfrak{t}_{\mathbb{Z}}$ defines a $\widetilde{\mathbb{T}}_{\tau} \to \mathbb{T}$-equivariant morphism of toric varieties which is a quotient by $\widetilde{\Gamma}_\tau$
\[
 \begin{tikzcd}[column sep=tiny]
 \widetilde{\mathbb{T}}_\tau \ar[draw=none]{r}[anchor=center]{\circlearrowright} \ar{d} & \widetilde{U}_\tau \ar{d}{/ \widetilde{\Gamma}_\tau} \\
  \mathbb{T}  \ar[draw=none]{r}[anchor=center]{\circlearrowright}  & U_\tau
 \end{tikzcd}
\]
with $\widetilde{U}_\tau$ non-singular.
\end{lemma}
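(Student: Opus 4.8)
The plan is to work one cone at a time and exploit the standard dictionary between toric varieties and semigroup algebras. Given $\tau \in \Sigma$, we have a sublattice $\widetilde{\mathfrak{t}}_{\tau,\mathbb{Z}} \subset \mathfrak{t}_{\mathbb{Z}}$, hence a dual surjection of dual lattices $\mathfrak{t}^\ast_{\mathbb{Z}} \hookrightarrow \widetilde{\mathfrak{t}}^\ast_{\tau,\mathbb{Z}}$ — here $\widetilde{\mathfrak{t}}^\ast_{\tau,\mathbb{Z}} := \Hom(\widetilde{\mathfrak{t}}_{\tau,\mathbb{Z}},\mathbb{Z})$ sits as a \emph{finite-index overlattice} of $\mathfrak{t}^\ast_{\mathbb{Z}}$ inside $\mathfrak{t}^\ast$. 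First I would define $\widetilde{U}_\tau$ to be the affine toric variety associated to the cone $\tau$ viewed in the vector space $\mathfrak{t}$ but with respect to the \emph{finer} lattice $\widetilde{\mathfrak{t}}_{\tau,\mathbb{Z}}$, i.e. $\widetilde{U}_\tau = \Spec \mathbb{C}[\tau\check{\ } \cap \widetilde{\mathfrak{t}}^\ast_{\tau,\mathbb{Z}}]$. The torus with character lattice $\widetilde{\mathfrak{t}}^\ast_{\tau,\mathbb{Z}}$ is exactly $\widetilde{\mathbb{T}}_\tau$ (its cocharacter lattice is $\widetilde{\mathfrak{t}}_{\tau,\mathbb{Z}} = \pi_1\widetilde{\mathbb{T}}_\tau$ by definition), and the lattice inclusion $\mathfrak{t}^\ast_{\mathbb{Z}} \subset \widetilde{\mathfrak{t}}^\ast_{\tau,\mathbb{Z}}$ induces a morphism of semigroup algebras $\mathbb{C}[\tau\check{\ } \cap \mathfrak{t}^\ast_{\mathbb{Z}}] \to \mathbb{C}[\tau\check{\ } \cap \widetilde{\mathfrak{t}}^\ast_{\tau,\mathbb{Z}}]$ and thus the claimed morphism $\widetilde{U}_\tau \to U_\tau$. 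Equivariance with respect to $\widetilde{\mathbb{T}}_\tau \to \mathbb{T}$ is immediate from functoriality of these constructions.

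The second step is to identify the morphism as the quotient by $\widetilde{\Gamma}_\tau$. Since $\widetilde{\Gamma}_\tau = \ker(\widetilde{\mathbb{T}}_\tau \to \mathbb{T}) = \Hom(\widetilde{\mathfrak{t}}^\ast_{\tau,\mathbb{Z}}/\mathfrak{t}^\ast_{\mathbb{Z}}, \mathbb{C}^\ast)$ is the finite abelian group Cartier-dual to the quotient lattice, it acts on $\mathbb{C}[\tau\check{\ } \cap \widetilde{\mathfrak{t}}^\ast_{\tau,\mathbb{Z}}]$ by scaling the monomial $\chi^m$ by the character $m \bmod \mathfrak{t}^\ast_{\mathbb{Z}}$, and the ring of invariants is precisely the span of those $\chi^m$ with $m \in \mathfrak{t}^\ast_{\mathbb{Z}}$, i.e. $\mathbb{C}[\tau\check{\ } \cap \mathfrak{t}^\ast_{\mathbb{Z}}]$. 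This is a standard fact about finite quotients of affine toric varieties; it also shows the morphism is finite (the semigroup algebra upstairs is a finitely generated module over the one downstairs because $[\widetilde{\mathfrak{t}}^\ast_{\tau,\mathbb{Z}}:\mathfrak{t}^\ast_{\mathbb{Z}}] < \infty$) and surjective.

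The third and main point is smoothness of $\widetilde{U}_\tau$. By the smoothness criterion for affine toric varieties, $\widetilde{U}_\tau$ is non-singular if and only if $\tau$ is generated by part of a $\mathbb{Z}$-basis of $\widetilde{\mathfrak{t}}_{\tau,\mathbb{Z}}$. I would argue this first for a maximal cone $\sigma \in \Sigma(n)$: since $\Sigma$ is simplicial, $\sigma$ is spanned by the $n$ primitive generators $\nu_{\rho_1},\dots,\nu_{\rho_n}$, which form an $\mathbb{R}$-basis of $\mathfrak{t}$; by the very definition $\widetilde{\mathfrak{t}}_{\sigma,\mathbb{Z}} = \langle w_{\rho_1}\nu_{\rho_1},\dots,w_{\rho_n}\nu_{\rho_n}\rangle$ is \emph{spanned} by $n$ linearly independent vectors, hence they form a $\mathbb{Z}$-basis of it, and $\sigma$ is generated by (all of) this basis. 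So $\widetilde{U}_\sigma \cong \mathbb{C}^n$. For a general cone $\tau$, I expect the argument to be the subtler part: $\tau$ is a face of some maximal $\sigma \supset \tau$, and one must check that the generators of $\tau$ extend to a $\mathbb{Z}$-basis of $\widetilde{\mathfrak{t}}_{\tau,\mathbb{Z}} = \bigcap_{\sigma \supset \tau} \widetilde{\mathfrak{t}}_{\sigma,\mathbb{Z}}$. Writing $\tau = \langle \nu_{\rho_1},\dots,\nu_{\rho_k}\rangle$ with $\rho_1,\dots,\rho_k \subset \sigma$, the vectors $w_{\rho_1}\nu_{\rho_1},\dots,w_{\rho_k}\nu_{\rho_k}$ lie in every $\widetilde{\mathfrak{t}}_{\sigma',\mathbb{Z}}$ with $\sigma' \supset \tau$ (each such $\sigma'$ contains all the $\rho_i$), hence in $\widetilde{\mathfrak{t}}_{\tau,\mathbb{Z}}$; the claim is that they span the saturation $\widetilde{\mathfrak{t}}_{\tau,\mathbb{Z}} \cap \mathrm{span}_{\mathbb{R}}(\tau)$ and extend to a basis of the whole intersection lattice. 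This can be seen by choosing the fixed maximal $\sigma \supset \tau$, in which $w_{\rho_1}\nu_{\rho_1},\dots,w_{\rho_n}\nu_{\rho_n}$ is a $\mathbb{Z}$-basis of $\widetilde{\mathfrak{t}}_{\sigma,\mathbb{Z}}$; the sublattice $\widetilde{\mathfrak{t}}_{\tau,\mathbb{Z}} \subset \widetilde{\mathfrak{t}}_{\sigma,\mathbb{Z}}$ is a direct summand on the "$\tau$-coordinates" because for an element of $\widetilde{\mathfrak{t}}_{\sigma,\mathbb{Z}}$ supported on $\mathrm{span}(\tau)$, lying in $\widetilde{\mathfrak{t}}_{\tau,\mathbb{Z}}$ is a condition only on its remaining coordinates, so one gets a splitting exhibiting $\langle w_{\rho_1}\nu_{\rho_1},\dots,w_{\rho_k}\nu_{\rho_k}\rangle$ as a direct summand of $\widetilde{\mathfrak{t}}_{\tau,\mathbb{Z}}$. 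Hence $\tau$ is generated by part of a basis and $\widetilde{U}_\tau$ is smooth (indeed $\widetilde{U}_\tau \cong \mathbb{C}^k \times (\mathbb{C}^\ast)^{n-k}$). Finally, one should remark that $\widetilde{U}_\tau \ds \widetilde{\Gamma}_\tau$ is a developable orbifold structure on $U_\tau$ since $\widetilde{\Gamma}_\tau$ is finite, hence acts properly discontinuously on the smooth variety $\widetilde{U}_\tau$; this justifies the phrase "affine toric orbifolds are developable" in the section title.
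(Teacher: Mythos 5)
Your proposal is correct, and its first two steps (construction of $\widetilde{U}_\tau$ from the semigroup $\tau\check{\ }\cap\widetilde{\mathfrak{t}}^\ast_{\tau,\mathbb{Z}}$, identification of $U_\tau$ as the quotient by the Cartier dual of $\widetilde{\mathfrak{t}}^\ast_{\tau,\mathbb{Z}}/\mathfrak{t}^\ast_{\mathbb{Z}}$ acting on monomials) spell out what the paper dismisses as ``immediate from the construction of toric varieties''. Where you genuinely diverge is in the only step the paper actually argues, namely that $\{w_\rho\nu_\rho\}_{\rho\subset\tau}$ extends to a $\mathbb{Z}$-basis of $\widetilde{\mathfrak{t}}_{\tau,\mathbb{Z}}$. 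The paper proves this by downward induction on $\dim\tau$, comparing a cone $\tau$ with a facet $\tau'$ via the snake lemma and deducing that the quotient $Q_{k-1}=\widetilde{\mathfrak{t}}_{\tau',\mathbb{Z}}/\bigoplus\mathbb{Z}w_{\rho_i}\nu_{\rho_i}$ stays torsion-free one dimension at a time. You instead fix one maximal $\sigma\supset\tau$ and argue in the coordinates of the basis $\{w_\rho\nu_\rho\}_{\rho\subset\sigma}$ of $\widetilde{\mathfrak{t}}_{\sigma,\mathbb{Z}}$: since $\bigoplus_{\rho\subset\tau}\mathbb{Z}w_\rho\nu_\rho$ lies in \emph{every} $\widetilde{\mathfrak{t}}_{\sigma',\mathbb{Z}}$ with $\sigma'\supset\tau$, membership of $x=x_\tau+x_\perp$ in $\widetilde{\mathfrak{t}}_{\tau,\mathbb{Z}}$ depends only on the component $x_\perp$ in the non-$\tau$ coordinates, so the coordinate projection is a retraction of $\widetilde{\mathfrak{t}}_{\tau,\mathbb{Z}}$ onto $\bigoplus_{\rho\subset\tau}\mathbb{Z}w_\rho\nu_\rho$ and splits it off as a direct summand. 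This is a correct, non-inductive argument that is arguably more transparent than the snake-lemma diagram; the paper's induction, on the other hand, needs no choice of ambient maximal cone and only the formal exact-sequence bookkeeping. Two cosmetic slips you should fix: the dual of the inclusion $\widetilde{\mathfrak{t}}_{\tau,\mathbb{Z}}\subset\mathfrak{t}_{\mathbb{Z}}$ is an \emph{injection} $\mathfrak{t}^\ast_{\mathbb{Z}}\hookrightarrow\widetilde{\mathfrak{t}}^\ast_{\tau,\mathbb{Z}}$ with finite cokernel, not a ``surjection'' (your arrow is right, the word is not); and $\widetilde{\mathfrak{t}}_{\tau,\mathbb{Z}}$ is a \emph{sublattice} (coarser, not finer) of $\mathfrak{t}_{\mathbb{Z}}$ --- it is the dual lattice $\widetilde{\mathfrak{t}}^\ast_{\tau,\mathbb{Z}}$ that is the finite-index overlattice of $\mathfrak{t}^\ast_{\mathbb{Z}}$, as you correctly state elsewhere.
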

\begin{proof} Everything is immediate from the construction of toric varieties, except non-singularity of the ``ramified cover'' $\widetilde{U}_\tau$: for this, we need to show that we can supplement the vectors $\{ w_\rho \nu_\rho \}_{\rho \subset \tau}$ to obtain a $\mathbb{Z}$-basis of the lattice $\widetilde{\mathfrak{t}}_{\tau,\mathbb{Z}}$; in particular, for $\tau = \sigma \in \Sigma(n)$ of maximal dimension there is nothing to show as the cone is simplicial by hypothesis. We now proceed by induction: suppose the assertion is proved for all cones of dimension at least $k$, and consider a pair of cones $\tau' \subset \tau$ of dimensions $k-1$ and $k$; the snake lemma
\[
 \begin{tikzcd}
 & 0 \ar{r} \ar{d} & 0 \ar{r} \ar{d} & \ker \ar{d} \ar[out=-22.5, in=157.5, looseness=3.2,overlay]{dddll} & \\
 0 \ar{r} &
 \mathbb{Z}w_{\rho_1}\nu_{\rho_1}\oplus\dots\oplus\mathbb{Z}w_{\rho_{k-1}}\nu_{\rho_{k-1}} \ar{r} \ar{d} &
 \widetilde{\mathfrak{t}}_{\tau',\mathbb{Z}} \ar{r} \ar{d} &
 Q_{k-1} \ar{r} \ar{d} & 0 \\
 0 \ar{r} & 
 \mathbb{Z}w_{\rho_1}\nu_{\rho_1}\oplus\dots\oplus\mathbb{Z}w_{\rho_k}\nu_{\rho_k} \ar{r} \ar{d} &
 \widetilde{\mathfrak{t}}_{\tau,\mathbb{Z}} \ar{r} \ar{d} &
 Q_k \ar{r} \ar{d} & 0 \\
 & \mathbb{Z} \ar{r} & \coker_1 \ar{r} & \coker_2 \ar{r} & 0
 \end{tikzcd}
\]
provides an injection $\ker \subset \mathbb{Z}$, which is therefore free of rank $1$; $\coker_1$ is finite, hence so is $\coker_2$, so $\rk Q_{k-1} = \rk Q_k +1$ and $Q_{k-1}$ has to be torsion-free if $Q_k$ is, as the generator of $\ker$ needs to map to a non-torsion element.
\end{proof}

\begin{corollary}\label{corollary_inj_limit} The collection of developable orbifold structures on $U_\tau$ glue to give a toric orbifold structure $\mathcal{X}_{\Sigma,w}$ on $X_\Sigma$,
or in more categorical terms the toric orbifold $\mathcal{X}_{\Sigma,w}$ is an injective limit of developable orbifolds
\[
 \mathcal{X}_{\Sigma,w} = \lim_{\underset{\tau \in \Sigma}{\longrightarrow}} \widetilde{U}_\tau \ds \widetilde{\Gamma}_\tau .
\]
Furthermore, since toric orbifold structures on toric varieties are uniquely determined by the orders of the cyclic stabilizer groups, the injective limit $\mathcal{X}_{\Sigma,w}$ is isomorphic to the orbifold structure obtained from either the toric Deligne--Mumford stack, or the construction from a weighted polytope in symplectic geometry.
\end{corollary}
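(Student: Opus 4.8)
The plan is to verify that the family of developable charts $\widetilde{U}_\tau \ds \widetilde{\Gamma}_\tau$ produced by Lemma \ref{lemma_developable} assembles into an orbifold atlas in the sense of Definition \ref{dfn_orbi-line-bundle}, and then to pin the resulting orbifold down by its coarse space and its isotropy along the toric divisors. Since $U_\tau \cap U_{\tau'} = U_{\tau \cap \tau'}$ with $\tau \cap \tau'$ a common face of $\tau$ and $\tau'$, and since equivalence of developable structures is transitive, the pairwise compatibility over all intersections reduces to a single statement about face relations: for $\tau' \prec \tau$, the restriction of $\widetilde{U}_\tau \ds \widetilde{\Gamma}_\tau$ to the open subset $U_{\tau'} \subset U_\tau$ should be equivalent to $\widetilde{U}_{\tau'} \ds \widetilde{\Gamma}_{\tau'}$, by an equivalence lifting the open inclusion, so that we are in the semi-lattice situation presupposed by Definition \ref{dfn_orbi-line-bundle}.

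First I would build this equivalence. The preimage of $U_{\tau'}$ under $\widetilde{U}_\tau \to U_\tau$ is the open toric subvariety of $\widetilde{U}_\tau$ corresponding to the face $\tau'$, i.e. the affine toric variety of $\tau'$ taken with respect to the lattice $\widetilde{\mathfrak{t}}_{\tau,\mathbb{Z}}$, carrying the restricted $\widetilde{\Gamma}_\tau$-action. The lattice inclusion $\widetilde{\mathfrak{t}}_{\tau',\mathbb{Z}} \subset \widetilde{\mathfrak{t}}_{\tau,\mathbb{Z}}$, together with the face relation $\tau' \prec \tau$, induces a toric morphism $\psi_1$ of $\widetilde{U}_{\tau'}$ onto this preimage. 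The crucial combinatorial point is that $\psi_1$ is \emph{\'etale}: a short computation with the $\mathbb{Z}$-bases supplied in the proof of Lemma \ref{lemma_developable} shows that $\widetilde{\mathfrak{t}}_{\tau',\mathbb{Z}}$ and $\widetilde{\mathfrak{t}}_{\tau,\mathbb{Z}}$ restrict to the \emph{same} sublattice $\langle w_\rho \nu_\rho \mid \rho \prec \tau' \rangle$ of the linear span of $\tau'$, and likewise on the span of every face of $\tau'$; thus the two lattices differ only in directions transverse to $\tau'$, where the morphism is an isogeny of tori, so there is no ramification along any boundary divisor and $\psi_1$ is a genuine orbifold covering. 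One may therefore take $\widetilde{U}_{\tau'}$ itself as the common cover $V$ in diagram (\ref{diag_equiv-developments}), with $\psi_2 = \mathrm{id}$: on that side the transformation group is tautologically $\widetilde{\Gamma}_{\tau'}$, while on the other side it consists of the automorphisms of $\widetilde{U}_{\tau'}$ lying over elements of $\widetilde{\Gamma}_\tau \subset \widetilde{\mathbb{T}}_\tau$. Because $\widetilde{\mathbb{T}}_{\tau'} \to \widetilde{\mathbb{T}}_\tau$ is surjective and the deck group of $\psi_1$ is $\ker(\widetilde{\mathbb{T}}_{\tau'} \to \widetilde{\mathbb{T}}_\tau) \subset \widetilde{\mathbb{T}}_{\tau'}$, every such lift is again a torus element, so this transformation group is precisely $\ker(\widetilde{\mathbb{T}}_{\tau'} \to \mathbb{T}) = \widetilde{\Gamma}_{\tau'}$; the two groups coincide and (\ref{diag_equiv-developments}) holds.

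Next I would promote these local equivalences to a coherent datum and read off the isotropy. For a chain of faces $\tau'' \prec \tau' \prec \tau$ the composite $\widetilde{U}_{\tau''} \to \widetilde{U}_{\tau'} \to \widetilde{U}_\tau$ agrees with the map built directly from $\widetilde{\mathfrak{t}}_{\tau'',\mathbb{Z}} \subset \widetilde{\mathfrak{t}}_{\tau,\mathbb{Z}}$, since at the lattice level one merely composes the inclusions $\widetilde{\mathfrak{t}}_{\tau'',\mathbb{Z}} \subset \widetilde{\mathfrak{t}}_{\tau',\mathbb{Z}} \subset \widetilde{\mathfrak{t}}_{\tau,\mathbb{Z}}$ while the face relations compose; the functoriality of the affine-toric-variety construction then yields the cocycle conditions of Definition \ref{dfn_orbi-line-bundle} on triple intersections. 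This assembles $\mathcal{X}_{\Sigma,w}$ as an orbifold with coarse space $\varinjlim_{\tau \in \Sigma} U_\tau = X_\Sigma$, realized as the injective limit $\varinjlim_{\tau \in \Sigma} \widetilde{U}_\tau \ds \widetilde{\Gamma}_\tau$ over the face poset of $\Sigma$, the structure maps being the \'etale covers just constructed. For the identification, observe that on the chart $\widetilde{U}_\rho$ the divisor lying over $D_\rho$ is fixed pointwise only by the cyclic subgroup of $\widetilde{\Gamma}_\rho$ of order $[\rho \cap \mathfrak{t}_{\mathbb{Z}} : \rho \cap \widetilde{\mathfrak{t}}_{\rho,\mathbb{Z}}] = [\mathbb{Z}\nu_\rho : \mathbb{Z}\,w_\rho\nu_\rho] = w_\rho$; hence $\mathcal{X}_{\Sigma,w}$ has coarse space $X_\Sigma$ and cyclic generic isotropy of order $w_\rho$ along $D_\rho$ for each $\rho \in \Sigma(1)$ -- exactly the data encoded by the stacky fan $(\Sigma,w)$, which is also the isotropy data of the smooth toric Deligne--Mumford stack of $(\Sigma,w)$ and of the symplectic orbifold obtained by reduction from the corresponding labelled polytope. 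Invoking the cited uniqueness of a toric orbifold structure given its coarse space and cyclic stabilizer orders (cf. \cite[Thm. 7.17]{fantechi.mann.nironi:2010}, \cite{lerman.tolman:1997}), all three constructions are isomorphic.

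The hard part, I expect, is the verification in the second paragraph that diagram (\ref{diag_equiv-developments}) genuinely holds between the restriction of a larger chart and a smaller one -- both the \'etaleness of $\psi_1$ and the matching of the two transformation groups -- since this is the step where the nested-lattice combinatorics of the $\widetilde{\mathfrak{t}}_{\tau,\mathbb{Z}}$ enters essentially; once it is in place, the coherence and the final identification follow formally from functoriality of the toric construction and the cited classification.
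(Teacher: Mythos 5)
Your proposal is correct and follows essentially the same route as the paper: both reduce to the inclusions $U_{\tau'}\subset U_\tau$ coming from face relations and hinge on the key lattice identity that $\widetilde{\mathfrak{t}}_{\tau',\mathbb{Z}}$ and $\widetilde{\mathfrak{t}}_{\tau,\mathbb{Z}}$ cut out the same sublattice $\bigoplus_{\rho\subset\tau'}\mathbb{Z}w_\rho\nu_\rho$ of the span of $\tau'$, so that the deck group $\widetilde{\mathfrak{t}}_{\tau,\mathbb{Z}}/\widetilde{\mathfrak{t}}_{\tau',\mathbb{Z}}$ acts freely and the connecting map $\widetilde{U}_{\tau'}\to\widetilde{U}_\tau$ is an unramified covering (the paper's ``local diffeomorphism''). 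Your extra verifications -- the cocycle coherence over triple intersections and the computation of the generic isotropy order $w_\rho$ along $D_\rho$ for the final identification -- are left implicit in the paper's proof but are consistent with it.
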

\begin{proof} The open cover $\{U_{\tau}\}_{\tau \in \Sigma}$ is closed under intersections, and whenever $U_{\tau_1} \subset U_{\tau_2}$ there is a commutative diagram
\[
 \begin{tikzcd}[row sep=small]
 \widetilde{\mathbb{T}}_{\tau_1}  \ar[draw=none]{d}[anchor=center]{\circlearrowright} \ar[two heads]{r} & \widetilde{\mathbb{T}}_{\tau_2}  \ar[draw=none]{d}[anchor=center]{\circlearrowright} \\
 \widetilde{U}_{\tau_1} \ar{dd}{/  \widetilde{\Gamma}_{\tau_1}} \ar{r} & \widetilde{U}_{\tau_2} \ar{dd}{/  \widetilde{\Gamma}_{\tau_2}} \\
 & & \\
 U_{\tau_1} \ar[draw=none]{r}[anchor=center]{\subset} & U_{\tau_2}
 \end{tikzcd}
\]
relating the uniformizing charts; we need to show that the map $\widetilde{U}_{\tau_1} \to \widetilde{U}_{\tau_2}$ is a local diffeomorphism.
In fact, this map is the quotient of $\widetilde{U}_{\tau_1}$ by the subgroup $K \subset \widetilde{\Gamma_{\tau_1}}$ such that $\widetilde{\Gamma}_{\tau_1} / K \cong \widetilde{\Gamma}_{\tau_2}$, so we need to verify that this group acts on $\widetilde{U}_{\tau_1}$ without fixed points.
Now, for any affine toric variety $U_\tau$, the set of elements of the torus that fix \emph{some} point is the subtorus whose Lie algebra is spanned by the cone $\tau$. From the definitions it is however clear that
\[
 \widetilde{\mathfrak{t}}_{\tau_1,\mathbb{Z}} \cap \bigoplus_{\rho \subset \tau_1} \mathbb{R} \nu_\rho =
 \bigoplus_{\rho \subset \tau_1} \mathbb{Z}w_\rho \nu_\rho =
 \widetilde{\mathfrak{t}}_{\tau_2,\mathbb{Z}} \cap \bigoplus_{\rho \subset \tau_1} \mathbb{R} \nu_\rho ,
\]
so $K \cong \widetilde{\mathfrak{t}}_{\tau_2,\mathbb{Z}} / \widetilde{\mathfrak{t}}_{\tau_1,\mathbb{Z}}$ acts without fixed points and the map $\widetilde{U}_{\tau_1} \to \widetilde{U}_{\tau_2}$ is a local diffeomorphism, as required.
\end{proof}

\subsection{The orbifold fundamental group} 
This (defined as the group of deck transformations of the orbifold universal cover, cf. Definition \ref{dfn_orbi_pi1}) is an example of one feature of toric orbifolds where already the simplest possible case, the weighted projective line, provides a quite good illustration of what to expect:

\begin{examplectd}{\ref{ex_weightedP1}} The orbifold fundamental group of $\mathbb{P}^1_{n,m}$ can be calculated using the uniformizing atlas over the affine toric pieces:
\[
 \begin{tikzcd}[row sep=small]
 \widetilde{\Gamma}_{-} \ar[draw=none]{d}[anchor=center]{\circlearrowright} & \widetilde{\Gamma}_{0} \ar[draw=none]{d}[anchor=center]{\circlearrowright} \ar[two heads]{l} \ar[two heads]{r} & \widetilde{\Gamma}_{+} \ar[draw=none]{d}[anchor=center]{\circlearrowright} \\
 \widetilde{U}_{-} \ar{dd}{n:1} & \widetilde{U}_{0} \ar{dd}{k:1} \ar{l} \ar{r} & \widetilde{U}_{+} \ar{dd}{m:1} \\
 & & & \\
 U_{-} & U_{0} \ar[draw=none]{l}[anchor=center]{\supset}  \ar[draw=none]{r}[anchor=center]{\subset} & U_{+}
 \end{tikzcd}, \textrm{ where } k := \frac{nm}{\gcd(n,m)} .
\]
The universal orbifold cover of $\mathbb{P}^1_{n,m}$ is given by ``the same developable orbifold charts on the maximal quotients which still glue'': setting $\widetilde{\Gamma}'_j < \widetilde{\Gamma}_j$ to be the subgroup of index $\gcd(n,m)$ and $V_j := \widetilde{U}_j / \widetilde{\Gamma}'_j$, these charts glue to give a $\mathbb{P}^1_{n',m'}$ with $n' := n/\gcd(n,m), m' := m/\gcd(n,m)$.
Since the indices of the subgroups by which we quotient are all the same, the quotients are canonically identified
\[
 \widetilde{\Gamma}_{-} / \widetilde{\Gamma}'_{-} =  \widetilde{\Gamma}_{0} / \widetilde{\Gamma}'_{0} =  \widetilde{\Gamma}_{+} / \widetilde{\Gamma}'_{+} 
\]
and the induced actions on $V_j$ glue to a global action. In the example depicted previously $n=6,m=4$, we have $k=12$ and the orbifold universal cover is
\[
 \mathbb{Z}/2\mathbb{Z} \circlearrowright \mathbb{P}^1_{3,2} \to \mathbb{P}^1_{6,4} ,
\]
so that in particular $\pi_1^{orb} \mathbb{P}^1_{6,4} \cong \mathbb{Z} / 2\mathbb{Z}$.
\end{examplectd}

In this example, it is well known that $\mathbb{P}^1_{n,m}$ is orbifold-simply connected for $n$ and $m$ relatively prime; this follows also from the construction in the general case that we give now.

\begin{proposition}\label{prop_orbi-pi_1} The orbifold fundamental group of $\mathcal{X}_{\Sigma,w}$ is
\[
 \pi_1^{orb} \mathcal{X}_{\Sigma,w} = \mathfrak{t}_{\mathbb{Z}} / \mathfrak{t}_{(\Sigma,w),\mathbb{Z}} ,
 \textrm\qquad { where } \qquad
 \mathfrak{t}_{(\Sigma,w),\mathbb{Z}} := \langle w_\rho \nu_\rho \vert \rho \in \Sigma(1) \rangle \subset \mathfrak{t}_{\mathbb{Z}}.
\]
\end{proposition}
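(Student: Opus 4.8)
The plan is to build the orbifold universal cover explicitly as a toric orbifold $\mathcal{X}_{\Sigma',w'}$ mapping to $\mathcal{X}_{\Sigma,w}$, and then identify the deck group with $\mathfrak{t}_{\mathbb{Z}}/\mathfrak{t}_{(\Sigma,w),\mathbb{Z}}$. The key observation is that the lattice $\mathfrak{t}_{(\Sigma,w),\mathbb{Z}} = \langle w_\rho \nu_\rho \mid \rho \in \Sigma(1)\rangle$ is contained in every $\widetilde{\mathfrak{t}}_{\sigma,\mathbb{Z}}$ (it is generated by \emph{all} the $w_\rho\nu_\rho$, whereas $\widetilde{\mathfrak{t}}_{\sigma,\mathbb{Z}}$ uses only those $\rho\subset\sigma$), so it is also contained in each $\widetilde{\mathfrak{t}}_{\tau,\mathbb{Z}}$. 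Hence we may form a new torus $\widehat{\mathbb{T}}$ with $\pi_1\widehat{\mathbb{T}} = \mathfrak{t}_{(\Sigma,w),\mathbb{Z}}$, and for each cone $\tau$ the cover $\widetilde{\mathbb{T}}_\tau$ factors through a further cover $\widehat{\mathbb{T}}_\tau \to \widetilde{\mathbb{T}}_\tau$ with $\pi_1\widehat{\mathbb{T}}_\tau := \mathfrak{t}_{(\Sigma,w),\mathbb{Z}}\cap(\text{the sublattice cut out over }\tau)$; concretely one takes the new weighted fan $(\Sigma',w')$ to be $\Sigma$ regarded inside the coarser lattice $\mathfrak{t}_{(\Sigma,w),\mathbb{Z}}$ (re-expressing the ray generators in terms of this lattice), with weights adjusted so that $w'_\rho\nu'_\rho = w_\rho\nu_\rho$ still. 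Applying Lemma \ref{lemma_developable} and Corollary \ref{corollary_inj_limit} to $(\Sigma',w')$ produces a toric orbifold $\mathcal{X}_{\Sigma',w'}$, and the lattice inclusion $\mathfrak{t}_{(\Sigma,w),\mathbb{Z}}\subset\mathfrak{t}_{\mathbb{Z}}$ induces a map $\mathcal{X}_{\Sigma',w'}\to\mathcal{X}_{\Sigma,w}$ of orbifolds.

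Next I would verify this map is an orbifold covering in the sense of Definition \ref{dfn_orbi_pi1} and that $\mathcal{X}_{\Sigma',w'}$ is orbifold-simply-connected. For the covering property: over each $U_\tau$, the map is by construction the intermediate quotient $\widetilde{U}_\tau \ds \widehat{\Gamma}_\tau \to \widetilde{U}_\tau\ds\widetilde{\Gamma}_\tau$ (same uniformizing manifold $\widetilde{U}_\tau$, smaller group), which is exactly the local model required. The fixed-point-free-action check that makes the glued maps local diffeomorphisms is the same argument as in Corollary \ref{corollary_inj_limit}: the subtorus fixing points of $U_\tau$ has Lie algebra $\operatorname{span}_{\mathbb{R}}\{\nu_\rho\mid\rho\subset\tau\}$, and one checks $\widetilde{\mathfrak{t}}_{\tau,\mathbb{Z}}\cap\bigoplus_{\rho\subset\tau}\mathbb{R}\nu_\rho = \bigoplus_{\rho\subset\tau}\mathbb{Z}w_\rho\nu_\rho = \mathfrak{t}_{(\Sigma,w),\mathbb{Z}}\cap\bigoplus_{\rho\subset\tau}\mathbb{R}\nu_\rho$, so the kernel $\widetilde{\mathfrak{t}}_{\tau,\mathbb{Z}}/\mathfrak{t}_{(\Sigma,w),\mathbb{Z}}$ of the deck action meets no isotropy and acts freely. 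For simple-connectedness: since $\Sigma$ is complete, the rays $\{\nu_\rho\}$ span $\mathfrak{t}$, so in the new lattice the generators $\{w'_\rho\nu'_\rho\}$ generate $\mathfrak{t}_{(\Sigma',w'),\mathbb{Z}}=\pi_1\widehat{\mathbb{T}}$ on the nose, i.e. $\mathfrak{t}_{(\Sigma',w'),\mathbb{Z}} = \mathfrak{t}_{(\Sigma,w),\mathbb{Z}}$ as lattices in the new ambient space — so the construction is a fixed point, and any further connected orbifold cover would have to have an even coarser generated-by-weights lattice, impossible. (Alternatively, invoke the second bullet after Definition \ref{dfn_orbi_pi1}: $\pi_1^{orb}$ is a quotient of $\pi_1$ of the regular part; the regular part of $\mathcal{X}_{\Sigma',w'}$ retracts onto a copy of the compact torus $\widehat{\mathbb{T}}/(\text{stabilizer directions})$, and tracking generators shows the quotient is trivial.)

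Finally, given that $\mathcal{X}_{\Sigma',w'}\to\mathcal{X}_{\Sigma,w}$ is the universal orbifold cover, the deck group is by definition $\pi_1^{orb}\mathcal{X}_{\Sigma,w}$, and it is computed from the two lattices: the deck transformations are precisely the automorphisms of $\mathcal{X}_{\Sigma',w'}$ covering the identity on $X_\Sigma$, which on the dense-orbit level are translations by $\widehat{\mathbb{T}}$ modulo those already acting trivially, i.e. the finite group $\mathfrak{t}_{\mathbb{Z}}/\mathfrak{t}_{(\Sigma,w),\mathbb{Z}}$ acting on $\mathcal{X}_{\Sigma',w'}$ via the cover $\widehat{\mathbb{T}}\to\mathbb{T}$. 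One should check this group does act by orbifold automorphisms (it preserves the weighted-fan data since it acts on the common lattice $\mathfrak{t}_{\mathbb{Z}}$ preserving $\Sigma$) and that the action is effective on $\mathcal{X}_{\Sigma',w'}$ (an element acting trivially would fix the dense orbit, forcing it into $\mathfrak{t}_{(\Sigma,w),\mathbb{Z}}$). I expect the main obstacle to be the verification that the explicitly constructed cover is genuinely \emph{universal} rather than merely \emph{a} connected cover — that is, ruling out further connected orbifold covers; the cleanest route is the observation in the previous paragraph that $\mathfrak{t}_{(\Sigma,w),\mathbb{Z}}$ is already generated by its weighted ray generators, so the construction cannot be iterated, combined with the short exact sequence $1\to\pi_1\widetilde U\to\pi_1^{orb}\to\widetilde\Gamma\to1$ from the excerpt applied chart-by-chart to see that any connected cover is dominated by ours.
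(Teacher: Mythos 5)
Your proposal is correct and follows essentially the same route as the paper: the paper likewise constructs the cover as the toric object $Y_{(\Sigma,w)}$ associated to $\Sigma$ viewed in the lattice $\mathfrak{t}_{(\Sigma,w),\mathbb{Z}}$, fits it between the developable charts $\widetilde{U}_\tau \to U_\tau$ as the intermediate quotients by $K_\tau = \ker(\widetilde{\Gamma}_\tau \to \mathfrak{t}_{\mathbb{Z}}/\mathfrak{t}_{(\Sigma,w),\mathbb{Z}})$, and obtains the matching upper bound on $\pi_1^{orb}$ from the surjection $\mathfrak{t}_{\mathbb{Z}} = \pi_1 U_0 \twoheadrightarrow \pi_1 X_\Sigma^{reg} \twoheadrightarrow \pi_1^{orb}\mathcal{X}_{\Sigma,w}$, which must factor through every $\widetilde{\Gamma}_\sigma$ and hence through $\mathfrak{t}_{\mathbb{Z}}/\mathfrak{t}_{(\Sigma,w),\mathbb{Z}}$. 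One caution: your primary universality argument (``the construction cannot be iterated'') only excludes further covers of the same toric form, so the load-bearing step is the one you relegate to a parenthetical --- the Seifert--van Kampen bound on $\pi_1$ of the regular part, which is exactly how the paper closes the gap.
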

\begin{remark} This result is well known in the literature, cf. \cite[Thm. 3.2]{poddar.sarkar:2010}, \cite[\S 2.2]{goldin.harada.johannsen.krepski:2016}. Since it has a short and explicit interpretation using the developable charts we reprove it here.
\end{remark}
\begin{proof}
As noted in the introduction, the orbifold fundamental group $\pi_1^{orb} \mathcal{X}_{\Sigma,w}$ is a quotient of the fundamental group of the subset of regular points $X_\Sigma^{reg} \subset X_\Sigma$.
This, in turn, is a union of torus orbits which is an open subset, so it is also a union of affine toric subvarieties $U_\tau$ for certain $\tau$'s. All of these intersect in the open orbit $U_0 \subset X_\Sigma^{reg}$, and by the Seifert--van Kampen theorem we obtain a canonical epimorphism $\mathfrak{t}_{\mathbb{Z}} = \pi_1 U_0 \to \pi_1 X_\Sigma^{reg}$. We conclude that the orbifold fundamental group is a quotient
\begin{equation}\label{eqn_epi_pi1}
 \mathfrak{t}_{\mathbb{Z}} \twoheadrightarrow \pi_1^{orb} \mathcal{X}_{\Sigma,w} .
\end{equation}
On the other hand, we know that the fundamental group of a quotient of a simply connected manifold, such as $\mathcal{X}_{\Sigma,w}\vert_{U_\sigma} = \widetilde{U}_\sigma \ds \widetilde{\Gamma}_\sigma$, is just the group of deck transformations $\widetilde{\Gamma}_\sigma = \mathfrak{t}_{\mathbb{Z}} / \widetilde{\mathfrak{t}}_{\sigma,\mathbb{Z}}$, hence (since $U_0 \subset U_\sigma$) the epimorphism (\ref{eqn_epi_pi1}) has to factor through $\widetilde{\Gamma}_\sigma$ for all maximal cones $\sigma \in \Sigma(n)$. Therefore it also must factor through $\mathfrak{t}_{\mathbb{Z}} / \mathfrak{t}_{(\Sigma,w),\mathbb{Z}}$, as any ray is contained in some maximal cone, so the orbifold fundamental group must be a quotient
\begin{equation}\label{eqn_epi_pi1_2}
 \mathfrak{t}_{\mathbb{Z}} / \mathfrak{t}_{(\Sigma,w),\mathbb{Z}} \twoheadrightarrow \pi_1^{orb} \mathcal{X}_{\Sigma,w} .
\end{equation}

It is clear on the other hand that the weighted fan $(\Sigma,w)$ defines a toric \emph{variety} $Y_{(\Sigma,w)}$ with respect to the torus $\mathbb{T}_{(\Sigma,w)}$ with fundamental group $\mathfrak{t}_{(\Sigma,w),\mathbb{Z}} = \pi_1 \mathbb{T}_{(\Sigma,w)}$; we show that it is the orbifold universal cover by ``fitting it between the developable orbifold charts'' $\widetilde{U}_\tau \to U_\tau$ of $\mathcal{X}_{\Sigma,w}$.
Since $\widetilde{\mathfrak{t}}_{\tau,\mathbb{Z}} \subset \mathfrak{t}_{(\Sigma,w),\mathbb{Z}}$ for all $\tau$, we have in fact canonical epimorphisms $\widetilde{\Gamma}_\tau = \mathfrak{t}_{\mathbb{Z}} / \widetilde{\mathfrak{t}}_{\tau,\mathbb{Z}} \to \mathfrak{t}_{\mathbb{Z}} / \mathfrak{t}_{(\Sigma,w),\mathbb{Z}}$; denote their kernels by $K_\tau$, and consider the intermediate quotients
\[
 \begin{tikzcd}[column sep=scriptsize]
 & \widetilde{\mathbb{T}}_\tau \ar[draw=none]{r}[anchor=center]{\circlearrowright} \ar{dl} & \widetilde{U}_\tau \ar{dl}[anchor=center,rectangle,fill=white,fill opacity=1]{K_\tau} \ar{dd}{\widetilde{\Gamma}_\tau} \\
 \mathbb{T}_\Sigma \ar[draw=none]{r}[anchor=center]{\circlearrowright} \ar{dr} & V_\tau \ar{dr}[anchor=center,rectangle,fill=white,fill opacity=1]{\mathfrak{t}_{\mathbb{Z}} / \mathfrak{t}_{\Sigma,\mathbb{Z}} } & \\
 & \mathbb{T} \ar[draw=none]{r}[anchor=center]{\circlearrowright} & U_\tau
 \end{tikzcd} .
\]
These are just the affine toric pieces of $Y_{(\Sigma,w)}$, so we constructed the claimed $\mathfrak{t}_{\mathbb{Z}} / \mathfrak{t}_{\Sigma,\mathbb{Z}}$-action on $Y_{(\Sigma,w)}$ that yields $X_\Sigma$ as a quotient, which finishes the proof.
\end{proof}

\begin{example}\label{ex_P2_2,2,3} We consider the orbifold $\mathbb{P}^2_{2,2,3}$ with underlying space the projective plane $\mathbb{P}^2$ and weights $2,2$ and $3$ along the toric divisors, corresponding to the fan of Figure \ref{fig_weightedP2}.

\begin{figure}[!htb]
 \centering
\subfloat[LoF 3a][ \\ Sub-lattices of maximal cones]{
\label{fig_3a} 
\begin{tikzpicture}
\node[anchor=center,inner sep=0] (image3a) at (0,0) {\includegraphics[width=0.45\textwidth]{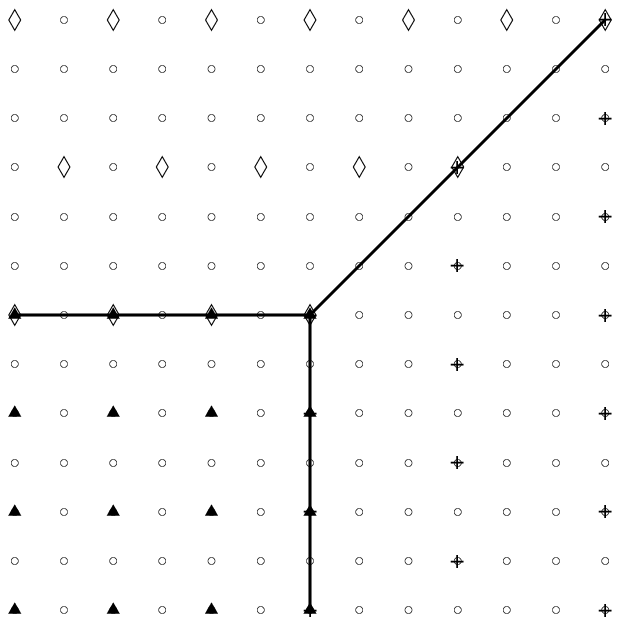}};
\begin{scope}[x={(image3a.east)-(image3a.west)},y={(image3a.north)-(image3a.south)}]
\node[rectangle,fill=white,fill opacity=1] at (0.3,0.3){\small $\rho_{NE}$};
\node[rectangle,fill=white,fill opacity=1] at (-0.475,0){\small $\rho_{W}$};
\node[rectangle,fill=white,fill opacity=1] at (0,-0.45){\small $\rho_{S}$};
\node[rectangle,fill=white,fill opacity=1] at (0.6,0.6){\tiny $w_{NE}=3$};
\node[rectangle,fill=white,fill opacity=1] at (-0.85,0){\tiny $w_{W}=2$};
\node[rectangle,fill=white,fill opacity=1] at (0,-0.8){\tiny $w_{S}=2$};
\node[rectangle,fill=white,fill opacity=1] at (-0.15,0.6){\small $\widetilde{\mathfrak{t}}_{\sigma_N,\mathbb{Z}}$};
\node[rectangle,fill=white,fill opacity=1] at (0.6,-0.15){\small $\widetilde{\mathfrak{t}}_{\sigma_E,\mathbb{Z}}$};
\node[rectangle,fill=white,fill opacity=1] at (-0.65,-0.425){\small $\widetilde{\mathfrak{t}}_{\sigma_{SW},\mathbb{Z}}$};
\end{scope}
\end{tikzpicture}}%
\hspace{2em}
\subfloat[LoF 3b][ \\ Sub-lattices of one- and zero-dimensional cones]{
\label{fig_3b}
\begin{tikzpicture}
\node[anchor=center,inner sep=0] (image3b) at (0,0) {\includegraphics[width=0.45\textwidth]{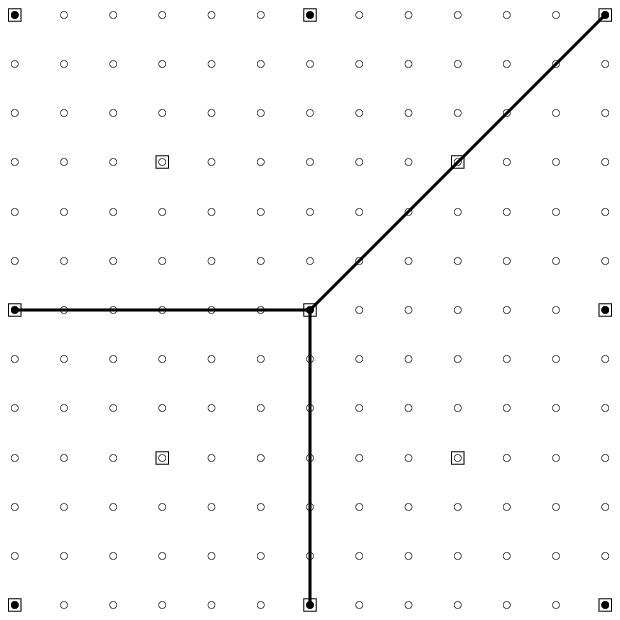}};
\begin{scope}[x={(image3b.east)-(image3b.west)},y={(image3b.north)-(image3b.south)}]
\node[rectangle,fill=white,fill opacity=1] at (0,0.775){\small $\widetilde{\mathfrak{t}}_{\rho_W,\mathbb{Z}} = \widetilde{\mathfrak{t}}_{\rho_S,\mathbb{Z}} = \widetilde{\mathfrak{t}}_{0,\mathbb{Z}}$};
\node[rectangle,fill=white,fill opacity=1] at (0.6,0.3){\small $\widetilde{\mathfrak{t}}_{\rho_{NE},\mathbb{Z}}$};
\end{scope}
\end{tikzpicture}}%
\caption{Fan and fundamental groups of covering tori for $\mathbb{P}^2_{2,2,3}$}
\label{fig_weightedP2}
\end{figure}

The sub-lattice $\mathfrak{t}_{(\Sigma,w),\mathbb{Z}}$ that yields the orbifold fundamental group $\cong \mathbb{Z}/2\mathbb{Z}$ is generated for example by $(2,0)$ and $(1,1)$, and exhibits the orbifold universal cover of this example as isomorphic to the quadric cone (considered in Example \ref{ex_quadric_cone} below).
\end{example}

\section{Picard groups of toric orbifolds}

We proceed to calculate the orbifold Picard group $\Picorb \mathcal{X}_{\Sigma,w}$ and its $\mathbb{T}$-linearized version $\TPicorb \mathcal{X}_{\Sigma,w}$. The toric description of the orbifold structure on $\mathcal{X}_{\Sigma,w}$ we have in place permits to proceed in complete analogy with the underlying projective toric variety $X_\Sigma$.

From the definitions of (linearized) orbi-line bundles in Section \ref{section_orbifolds}, it follows in particular that we have canonically defined subgroups $\Pic^{\widetilde{\mathbb{T}}_\tau} \widetilde{U}_\tau \subset \TPicorb \mathcal{X}_{\Sigma,w}\vert_{U_\tau}$, and it turns out these are sufficient for our purposes.
\begin{theorem}\label{theorem_Picorb} For a toric orbifold $\mathcal{X}_{\Sigma,w}$, the linearized orbifold Picard group admits the following combinatorial descriptions:
\begin{eqnarray*}
 \mathcal{L} \in \TPicorb \mathcal{X}_{\Sigma,w}
 & \leftrightarrow &
 \left\{ m_\sigma \in \widetilde{\mathfrak{t}}^\ast_{\sigma,\mathbb{Z}} \right\}_{\sigma\in\Sigma(n)} \textrm{ s.th. } \forall \sigma_1, \sigma_2 : m_{\sigma_1}-m_{\sigma_2} \in (\sigma_1 \cap \sigma_2 )^\perp \\
 & \leftrightarrow &
 \left\{ h_\rho \in \frac{1}{w_\rho}\rho^\ast_{\mathbb{Z}} \right\}_{\rho\in\Sigma(1)} \ \leftrightarrow \  \left\{ l_\rho = h_\rho(w_\rho\nu_\rho) \in \mathbb{Z} \right\}_{\rho\in\Sigma(1)} .
\end{eqnarray*}
In other words, there is a commutative diagram
\begin{equation}\label{diagram_linearized-orbi-Pic}
 \begin{tikzcd}
  \TPic X_\Sigma \ar[draw=none]{r}[anchor=center]{\subset} \ar[hookrightarrow]{d} &  \TPicorb \mathcal{X}_{\Sigma,w}  \ar{d}{\cong} \\
 \bigoplus_{\rho \in \Sigma(1)} \rho^\ast_{\mathbb{Z}} \ar[draw=none]{r}[anchor=center]{\subset} &
 \bigoplus_{\rho \in \Sigma(1)} \frac{1}{w_\rho}\rho^\ast_{\mathbb{Z}} 
 \end{tikzcd} .
\end{equation}
\end{theorem}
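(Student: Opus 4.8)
The plan is to imitate the classical computation of the $\mathbb{T}$-equivariant Picard group of a toric variety, but carried out fiberwise over the developable atlas $\{\widetilde{U}_\tau \ds \widetilde{\Gamma}_\tau\}_{\tau\in\Sigma}$ furnished by Lemma \ref{lemma_developable} and Corollary \ref{corollary_inj_limit}. First I would treat the affine building blocks: a $\mathbb{T}$-linearized orbi-line bundle on $\mathcal{X}_{\Sigma,w}\vert_{U_\sigma}$ for a maximal cone $\sigma$ is by definition a $\widetilde{\mathbb{T}}_\sigma$-equivariant line bundle on the smooth affine toric variety $\widetilde{U}_\sigma$; since $\widetilde{U}_\sigma$ is affine with a fixed point and $\mathrm{Pic}\,\widetilde{U}_\sigma=0$, such a bundle is trivial as a line bundle and its linearization is recorded by a single character of $\widetilde{\mathbb{T}}_\sigma$, namely the weight of the $\widetilde{\mathbb{T}}_\sigma$-action on the fiber over the torus-fixed point. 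This gives the bijection with $m_\sigma\in\widetilde{\mathfrak{t}}^\ast_{\sigma,\mathbb{Z}}$. For a general cone $\tau$ one argues identically with $\widetilde{U}_\tau$, using that $\widetilde{U}_\tau$ retracts onto the orbit labelled by $\tau$; the restriction maps correspond to the dual maps $\widetilde{\mathfrak{t}}^\ast_{\tau,\mathbb{Z}}\hookrightarrow$ characters of the covering tori induced by the surjections $\widetilde{\mathbb{T}}_{\tau_1}\to\widetilde{\mathbb{T}}_{\tau_2}$ of Corollary \ref{corollary_inj_limit}.

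Next I would assemble the global object. Because the atlas is a finite semi-lattice closed under intersection and the overlaps are exactly the $U_{\sigma_1\cap\sigma_2}$, the Čech-type gluing data of Definition \ref{dfn_orbi-line-bundle} degenerate: a global $\mathbb{T}$-linearized orbi-line bundle is the same as a compatible collection $\{m_\sigma\}_{\sigma\in\Sigma(n)}$, where compatibility on $U_{\sigma_1}\cap U_{\sigma_2}=U_{\sigma_1\cap\sigma_2}$ means that $m_{\sigma_1}$ and $m_{\sigma_2}$ pull back to the same character of $\widetilde{\mathbb{T}}_{\sigma_1\cap\sigma_2}$. Unwinding the lattice definitions, $m_{\sigma_1}$ and $m_{\sigma_2}$, viewed in $\mathfrak{t}^\ast_\mathbb{Q}$, restrict equally to $\widetilde{\mathfrak{t}}_{\sigma_1\cap\sigma_2,\mathbb{Z}}$; since that lattice spans the linear subspace generated by $\sigma_1\cap\sigma_2$, this is precisely the condition $m_{\sigma_1}-m_{\sigma_2}\in(\sigma_1\cap\sigma_2)^\perp$. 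I would also observe — mimicking the classical fact that on a complete fan it suffices to check overlaps of maximal cones — that the lower-dimensional charts impose no further constraints, because $\widetilde{\mathfrak{t}}_{\tau,\mathbb{Z}}=\bigcap_{\sigma\supset\tau}\widetilde{\mathfrak{t}}_{\sigma,\mathbb{Z}}$ by definition and hence agreement of all the $m_\sigma$ on pairwise overlaps already forces a consistent value on every $\widetilde{U}_\tau$.

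For the remaining two descriptions I would pass from characters to their values on ray generators. Given a compatible family $\{m_\sigma\}$, to each ray $\rho$ with containing maximal cone $\sigma$ associate $l_\rho:=m_\sigma(w_\rho\nu_\rho)\in\mathbb{Z}$; this is well-defined (independent of the choice of $\sigma\ni\rho$) precisely by the compatibility condition, since $\rho\subset\sigma_1\cap\sigma_2$ forces $m_{\sigma_1}-m_{\sigma_2}$ to annihilate $w_\rho\nu_\rho$. Equivalently, set $h_\rho\in\tfrac{1}{w_\rho}\rho^\ast_\mathbb{Z}$ to be the functional on $\rho$ sending $\nu_\rho$ to $l_\rho/w_\rho$; i.e. $h_\rho$ is the restriction to $\rho$ of $m_\sigma$, which a priori lies in $\tfrac{1}{w_\rho}\rho^\ast_\mathbb{Z}$ because $m_\sigma$ is integral on $\widetilde{\mathfrak{t}}_{\sigma,\mathbb{Z}}\ni w_\rho\nu_\rho$. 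Conversely, because each $\sigma$ is simplicial and $\{w_\rho\nu_\rho\}_{\rho\subset\sigma}$ is part of a $\mathbb{Z}$-basis of $\widetilde{\mathfrak{t}}_{\sigma,\mathbb{Z}}$ (the non-singularity statement proved in Lemma \ref{lemma_developable}), an arbitrary choice of integers $\{l_\rho\}_{\rho\subset\sigma}$ determines a unique $m_\sigma\in\widetilde{\mathfrak{t}}^\ast_{\sigma,\mathbb{Z}}$, and these automatically glue since any two maximal cones meeting along $\rho$ assign $\rho$ the same $l_\rho$; hence $\{l_\rho\}_{\rho\in\Sigma(1)}$ and $\{h_\rho\}$ range over all of $\bigoplus_\rho\mathbb{Z}$ and $\bigoplus_\rho\tfrac{1}{w_\rho}\rho^\ast_\mathbb{Z}$ respectively, with no compatibility constraint. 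The commutative square \eqref{diagram_linearized-orbi-Pic} then records that pullback from the coarse space corresponds to the inclusion $\bigoplus_\rho\rho^\ast_\mathbb{Z}\subset\bigoplus_\rho\tfrac{1}{w_\rho}\rho^\ast_\mathbb{Z}$, since on $X_\Sigma$ (weights all $1$) the same recipe is the standard divisor description of $\TPic X_\Sigma$.

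The main obstacle I anticipate is the gluing/descent bookkeeping of the second paragraph: one must verify carefully that the equivariant Čech data of Definition \ref{dfn_orbi-line-bundle}, which in principle involve transition \emph{functions} on the ramified covers $\widetilde{U}_{\sigma_1\cap\sigma_2}$ and a cocycle condition on triple overlaps, genuinely collapse to the purely combinatorial compatibility $m_{\sigma_1}-m_{\sigma_2}\in(\sigma_1\cap\sigma_2)^\perp$ — i.e. that the transition functions can be normalized to be constant (equivariant automorphisms of the trivial bundle are just characters, and on the overlap the relevant character group is that of $\widetilde{\mathbb{T}}_{\sigma_1\cap\sigma_2}$), and that the triple-overlap cocycle condition is then automatic. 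This is the orbifold analogue of the classical statement that $\TPic$ of an affine toric variety is a group of characters and that higher Čech obstructions vanish on the toric atlas; the subtlety is to see it through the covering tori $\widetilde{\mathbb{T}}_\tau$ and their surjections, which is exactly the structure Corollary \ref{corollary_inj_limit} was set up to provide.
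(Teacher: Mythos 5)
Your proposal follows essentially the same route as the paper's proof: restrict to the invariant affine charts, identify the linearized orbifold Picard group over a vertex chart with $\widetilde{\mathfrak{t}}^\ast_{\sigma,\mathbb{Z}}$, extract the compatibility condition on overlaps, and re-encode everything in the ray data $l_\rho$ via the basis statement of Lemma \ref{lemma_developable}; you even supply the converse direction (arbitrary $\{l_\rho\}$ glue) more explicitly than the paper does.

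One step in your second paragraph is misstated, however. You justify $m_{\sigma_1}-m_{\sigma_2}\in(\sigma_1\cap\sigma_2)^\perp$ by saying that $m_{\sigma_1}$ and $m_{\sigma_2}$ ``pull back to the same character of $\widetilde{\mathbb{T}}_{\sigma_1\cap\sigma_2}$'' and that the lattice $\widetilde{\mathfrak{t}}_{\sigma_1\cap\sigma_2,\mathbb{Z}}$ ``spans the linear subspace generated by $\sigma_1\cap\sigma_2$''. Both claims are false as written: pullback of characters along a covering of tori is injective, so literal equality of the pullbacks would force $m_{\sigma_1}=m_{\sigma_2}$; and $\widetilde{\mathfrak{t}}_{\tau,\mathbb{Z}}=\bigcap_{\sigma\supset\tau}\widetilde{\mathfrak{t}}_{\sigma,\mathbb{Z}}$ is a finite intersection of full-rank sublattices of $\mathfrak{t}_{\mathbb{Z}}$ and hence itself of full rank --- this is exactly the distinction the paper warns about between $\widetilde{\Gamma}_\tau$ and the orbifold stabilizer groups $\Gamma_\tau$ of the orbit labelled by $\tau$. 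The correct justification, and the one the paper uses, is that on the overlap chart the two equivariant trivializations differ by a non-vanishing $\widetilde{\mathbb{T}}_{\sigma_1\cap\sigma_2}$-semi-invariant function on $\widetilde{U}_{\sigma_1\cap\sigma_2}$, and the weights of such functions are exactly the characters lying in $(\sigma_1\cap\sigma_2)^\perp$; equivalently, $\Pic^{\widetilde{\mathbb{T}}_\tau}\widetilde{U}_\tau=\widetilde{\mathfrak{t}}^\ast_{\tau,\mathbb{Z}}/\bigl(\tau^\perp\cap\widetilde{\mathfrak{t}}^\ast_{\tau,\mathbb{Z}}\bigr)$, i.e.\ what must agree is the character of the stabilizer of the minimal orbit, not of the whole covering torus. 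With that replacement the rest of your argument --- including the reduction of the triple-overlap cocycle to character bookkeeping, and the passage to $\{l_\rho\}$ using that $\{w_\rho\nu_\rho\}_{\rho\subset\sigma}$ is a $\mathbb{Z}$-basis of $\widetilde{\mathfrak{t}}_{\sigma,\mathbb{Z}}$ together with simpliciality of $\sigma_1\cap\sigma_2$ --- goes through and coincides with the paper's proof.
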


\begin{remark} The fact that the linearized Picard group is described by an \emph{unrestricted} choice of integers associated to the rays of the fan is in complete analogy with line bundles on \emph{non-singular} toric varieties (i.e. toric manifolds), and could be read as a confirmation of the philosophical point of view that ``orbifolds are non-singular''.
\end{remark}

\begin{wrapfigure}{r}{0.33\textwidth}
 \centering
\begin{tikzpicture}
\node[anchor=center,inner sep=0] (image4) at (0,0) {\includegraphics[width=0.33\textwidth,trim=0 0 0 0,clip=]{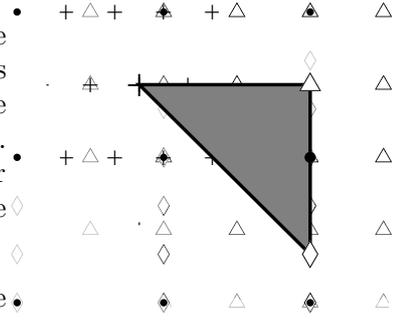}};
\begin{scope}[x={(image4.east)-(image4.west)},y={(image4.north)-(image4.south)}]
\end{scope}
\end{tikzpicture}%
\caption{Classifying data of an orbi-line bundle}\label{fig_olb_weightedP2}
\end{wrapfigure}
\hspace{-2ex}\begin{examplectd}{\ref{ex_P2_2,2,3}}
Just as for toric varieties, line bundles with sections correspond in this way to polytopes in the dual of the Lie algebra; we draw a simple example (of rational Chern class $\frac{7}{6}\mathcal{O}_{\mathbb{P}^2}(1)$ on $\mathbb{P}^2_{2,2,3}$) in Figure \ref{fig_olb_weightedP2}.
Each $m_\sigma$ can be chosen in the corresponding character lattice, subject to the compatibility requirement that the dual fan to the polytope be contained in $\Sigma$.
\end{examplectd}

\begin{proof}[Proof of Theorem A] We proceed as before by using the fact that the restrictions to affine toric subvarieties have no nontrivial automorphisms; therefore any $\mathbb{T}$-linearized orbi-line bundle is still determined by the collection of its restrictions to an open cover
\[
 \mathcal{L} \in \TPicorb \mathcal{X}_{\Sigma,w}
 \leftrightarrow
 \left\{ \mathcal{L}\vert_{U_\tau} \in \TPicorb \mathcal{X}_{\Sigma,w}\vert_{U_\tau} \right\}_{\tau\in\Sigma} .
\]
Now as for any maximal cone $\sigma \in \Sigma(n)$, the orbifold vertex chart $\widetilde{U}_\sigma \to U_\sigma$ represents $\mathcal{X}_{\Sigma,w}\vert_{U_\sigma}$ as finite quotient of an affine space, from the definition we have that
\[
 \TPicorb \mathcal{X}_{\Sigma,w}\vert_{U_\sigma} =
 \Pic^{\widetilde{\mathbb{T}}_\sigma} \widetilde{U}_\sigma =
 \widetilde{\mathfrak{t}}^\ast_{\sigma,\mathbb{Z}},
\]
Any restriction $\mathcal{L}\vert_{U_\tau}$ can be obtained by restricting from some vertex chart, so
\[
 \mathcal{L}\vert_{U_\tau} \in  \Pic^{\widetilde{\mathbb{T}}_\tau} \widetilde{U}_\tau \subset 
 \TPicorb \mathcal{X}_{\Sigma,w}\vert_{U_\tau}
\]
and the collection of characters $m_\tau \in \widetilde{\mathfrak{t}}^\ast_{\tau,\mathbb{Z}}$ has to satisfy the condition
\[
 m_{\tau_1}-m_{\tau_2} \in (\tau_1 \cap \tau_2)^\perp .
\]
If we write the condition for $m_\sigma$ to lie in $\widetilde{\mathfrak{t}}^\ast_{\sigma,\mathbb{Z}}$ in the basis $\{ \nu_{\rho,\sigma}^\ast \}_{\rho \subset \sigma}$, we obtain
\[
 m_\sigma = \sum_{\rho \subset \sigma} h_\rho(\nu_\rho) \nu_{\rho,\sigma}^\ast \quad \textrm{ with } \quad
 h_\rho(\nu_\rho) \in \frac{1}{w_\rho}\mathbb{Z} ,
\]
and the compatibility between the $m_\sigma$ implies that $h_\rho(\nu_\rho)$ is independent of the choice of maximal cone $\sigma \supset \rho$, which concludes the proof of the Theorem.
\end{proof}

\begin{example}\label{ex_quadric_cone}
An example of a two-dimensional toric variety with orbifold singularities (as are all toric singularities in dimension two) is the quadric cone, shown in Figure \ref{fig_quadric-cone}. It defines a non-trivial orbifold structure even for the trivial weight function $w\equiv 1$ we consider here.

\begin{figure}[!htb]
 \centering
\subfloat[LoF 5a][ \\ $X_\Sigma \cong$ \\ {$\{ [W\!:\!X\!:\!Y\!:\!Z] | XZ-Y^2\}$}]{
\label{fig_5a} 
\begin{tikzpicture}
\node[anchor=center,inner sep=0] (image5a) at (0,0) {\includegraphics[width=0.32\textwidth]{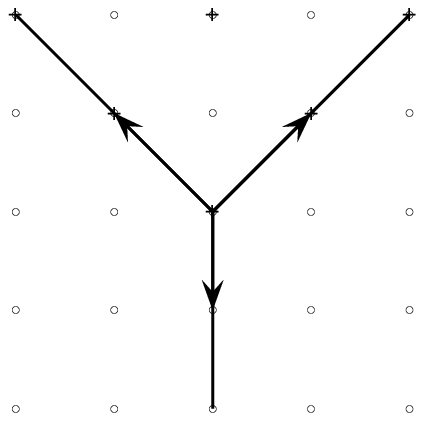}};
\begin{scope}[x={(image5a.east)-(image5a.west)},y={(image5a.north)-(image5a.south)}]
\node[rectangle,fill=white,fill opacity=1] at (0.6,0.6){\small $\nu_{\rho_{NE}}$};
\node[rectangle,fill=white,fill opacity=1] at (-0.6,0.6){\small $\nu_{\rho_{NW}}$};
\node[rectangle,fill=white,fill opacity=1] at (0,-0.6){\small $\nu_{\rho_{S}}$};
\end{scope}
\end{tikzpicture}}%
\subfloat[LoF 5b][ \\ $1\cdot\nu_{\rho_{NE}}^\ast \oplus 1\cdot\nu_{\rho_{NW}}^\ast \oplus 0\cdot\nu_{\rho_{S}}^\ast \mod \mathfrak{t}^\ast_{\mathbb{Z}} = L_{P_1} \in \Pic X_\Sigma$]{
\label{fig_5b} 
\begin{tikzpicture}
\node[anchor=center,inner sep=0] (image5b) at (0,0) {\includegraphics[width=0.32\textwidth]{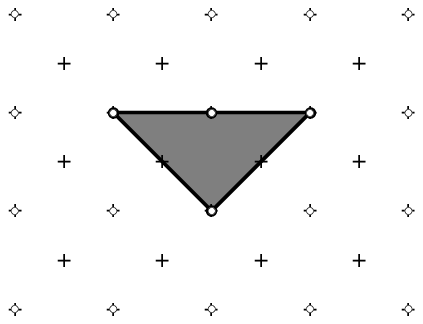}};
\begin{scope}[x={(image5b.east)-(image5b.west)},y={(image5b.north)-(image5b.south)}]
\node[] at (0,0){\small $P_1$};
\end{scope}
\end{tikzpicture}}%
\subfloat[LoF 5c][ \\ $2\cdot\nu_{\rho_{NE}}^\ast \oplus 1\cdot\nu_{\rho_{NW}}^\ast \oplus 0\cdot\nu_{\rho_{S}}^\ast \mod \mathfrak{t}^\ast_{\mathbb{Z}} = \mathcal{L}_{P_2} \in \Picorb \mathcal{X}_{\Sigma,1}$]{
\label{fig_5c} 
\begin{tikzpicture}
\node[anchor=center,inner sep=0] (image5c) at (0,0) {\includegraphics[width=0.32\textwidth]{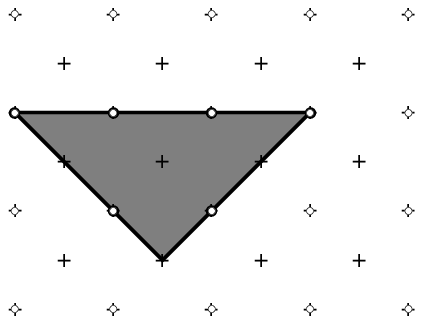}};
\begin{scope}[x={(image5c.east)-(image5c.west)},y={(image5c.north)-(image5c.south)}]
\node[] at (-0.25,-0.2){\small $P_2$};
\end{scope}
\end{tikzpicture}}%
\caption{The quadric cone as orbifold}
\label{fig_quadric-cone}
\end{figure}

The line bundle $L_{P_1}$ defined by the polytope $P_1$ embeds $X_\Sigma$ in $\mathbb{P}^3$ as the quadric cone $XZ-Y^2=0$.

On the same orbifold, consider the second polytope $P_2$ defined by the element  $2\cdot\nu_{\rho_{NE}}^\ast \oplus 1\cdot\nu_{\rho_{NW}}^\ast \oplus 0\cdot\nu_{\rho_{S}}^\ast \mod \mathfrak{t}^\ast_{\mathbb{Z}}$. It corresponds to an orbi-line bundle $\mathcal{L}_{P_2}$ which is not the pull-back of a line bundle, since it is defined by an element outside the image of the left vertical arrow in diagram (\ref{diagram_orbi-Pic}), which is \emph{not} surjective in case $X_\Sigma$ is singular. This shows that considering non-smooth simplicial projective toric varieties as a orbifolds makes a difference regarding the Picard group even with trivial weight function.
\end{example}

\begin{corollary}\label{cor_linearizability}
Any orbi-line bundle admits a lift of the torus action, and hence
\[
 \Picorb \mathcal{X}_{\Sigma,w} = \TPicorb \mathcal{X}_{\Sigma,w} / \mathfrak{t}^\ast_{\mathbb{Z}} .
\]
Thus the following diagram commutes
\begin{equation}\label{diagram_orbi-Pic}
 \begin{tikzcd}
 H^2( X_\Sigma ; \mathbb{Z}) = \Pic X_\Sigma \ar[draw=none]{r}[anchor=center]{\subset} \ar[hookrightarrow]{d} &  \Picorb \mathcal{X}_{\Sigma,w}  \ar{d}{\cong} \\
 \left( \bigoplus_{\rho \in \Sigma(1)} \rho^\ast_{\mathbb{Z}} \right) / \mathfrak{t}^\ast_{\mathbb{Z}}  \ar[draw=none]{r}[anchor=center]{\subset} &
 \left( \bigoplus_{\rho \in \Sigma(1)} \frac{1}{w_\rho}\rho^\ast_{\mathbb{Z}} \right) / \mathfrak{t}^\ast_{\mathbb{Z}}
 \end{tikzcd} ,
\end{equation}
and the torsion subgroup is characterized as either the kernel of the first Chern class or the dual of the orbifold fundamental group,
\[
 \left(\Picorb \mathcal{X}_{\Sigma,w}\right)_{\textrm{tor}} = 
 \ker c_1 =
 \mathfrak{t}^\ast_{(\Sigma,w),\mathbb{Z}} /  \mathfrak{t}^\ast_{\mathbb{Z}} =
 \pi_1^{\textrm{orb}}(\mathcal{X}_{\Sigma,w})\check{\ } .
\]
\end{corollary}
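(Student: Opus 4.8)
The plan is to read everything off from Theorem \ref{theorem_Picorb}, which already identifies $\TPicorb \mathcal{X}_{\Sigma,w}$ with the lattice $\bigoplus_{\rho\in\Sigma(1)}\tfrac1{w_\rho}\rho^\ast_{\mathbb{Z}}$ containing $\TPic X_\Sigma=\bigoplus_\rho\rho^\ast_{\mathbb{Z}}$; what remains is to pass from the linearized to the plain Picard group and then to analyze the Chern class homomorphism on the combinatorial model.

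\emph{Step 1: linearizability.} I would argue, exactly as in the classical toric case, that every orbi-line bundle $\mathcal{L}$ admits a $\mathbb{T}$-linearization. Restricted to the open orbit $U_0\subset X_\Sigma$, whose induced orbifold structure is that of the honest manifold $\mathbb{T}_{\mathbb{C}}$ (a free quotient), $\mathcal{L}$ is trivial and carries a $\widetilde{\mathbb{T}}_0$-equivariant (eigen-)trivialization; the corresponding rational section $s$ of $\mathcal{L}$ over $\mathcal{X}_{\Sigma,w}$ restricts on each developable chart $\widetilde U_\tau$ to a monomial times a unit on the dense orbit, so $\mathrm{div}(s)$ is a $\widetilde{\mathbb{T}}_\tau$-invariant orbi-Cartier divisor on $\widetilde U_\tau$, and the tautological section $1$ of $\mathcal{O}(\mathrm{div}\, s)\cong\mathcal{L}$ then supplies a compatible family of $\widetilde{\mathbb{T}}_\tau$-linearizations, i.e. a $\mathbb{T}$-linearization of $\mathcal{L}$. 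Two linearizations of a fixed $\mathcal{L}$ differ by twisting with a character, and since $X_\Sigma$ is complete the only bundle automorphisms are scalars, so distinct linearizations give non-isomorphic linearized bundles; hence $0\to\mathfrak{t}^\ast_{\mathbb{Z}}\to\TPicorb\mathcal{X}_{\Sigma,w}\to\Picorb\mathcal{X}_{\Sigma,w}\to 0$ is exact, and under the identification of Theorem \ref{theorem_Picorb} the sub-lattice $\mathfrak{t}^\ast_{\mathbb{Z}}$ enters by restriction $m\mapsto(m|_{\mathbb{Z}\nu_\rho})_\rho\in\bigoplus_\rho\rho^\ast_{\mathbb{Z}}$. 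Quotienting diagram (\ref{diagram_linearized-orbi-Pic}) by $\mathfrak{t}^\ast_{\mathbb{Z}}$ yields (\ref{diagram_orbi-Pic}), using $\Pic X_\Sigma\cong H^2(X_\Sigma;\mathbb{Z})$ for the toric variety $X_\Sigma$ as usual.

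\emph{Step 2: the Chern class and torsion.} Writing an element of $\Picorb\mathcal{X}_{\Sigma,w}$ as $(h_\rho)_\rho\bmod\mathfrak{t}^\ast_{\mathbb{Z}}$, its $d$-th power for $d=\mathrm{lcm}_\rho w_\rho$ lies in $\TPic X_\Sigma$ and descends to the line bundle $\mathcal{O}_{X_\Sigma}\big(\sum_\rho d\,h_\rho(\nu_\rho)D_\rho\big)$ on the coarse space, so by Definition \ref{dfn_Picorb} one gets $c_1(\mathcal{L})=\sum_\rho h_\rho(\nu_\rho)[D_\rho]\in H^2(X_\Sigma;\mathbb{Q})$ (up to the customary sign). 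Since $\Pic X_\Sigma\cong H^2(X_\Sigma;\mathbb{Z})$ is torsion-free, $c_1$ kills $(\Picorb\mathcal{X}_{\Sigma,w})_{\mathrm{tor}}$; conversely if $c_1(\mathcal{L})=0$ then $\mathcal{L}^d=\pi^\ast L$ with $c_1(L)=0$, whence $L=\mathcal{O}$ and $\mathcal{L}$ is $d$-torsion, so $(\Picorb\mathcal{X}_{\Sigma,w})_{\mathrm{tor}}=\ker c_1$. To compute $\ker c_1$ I use the standard exact sequence $\mathfrak{t}^\ast_{\mathbb{Q}}\xrightarrow{\,m\mapsto(\langle m,\nu_\rho\rangle)_\rho\,}\bigoplus_\rho\mathbb{Q}\,D_\rho\to H^2(X_\Sigma;\mathbb{Q})\to 0$ for the complete simplicial toric variety $X_\Sigma$: $c_1(\mathcal{L})=0$ exactly when $(h_\rho(\nu_\rho))_\rho$ is the restriction of a single $m\in\mathfrak{t}^\ast_{\mathbb{Q}}$, and by $h_\rho\in\tfrac1{w_\rho}\rho^\ast_{\mathbb{Z}}$ this forces $m(w_\rho\nu_\rho)\in\mathbb{Z}$ for all $\rho$, i.e. $m\in\mathfrak{t}^\ast_{(\Sigma,w),\mathbb{Z}}=\Hom(\mathfrak{t}_{(\Sigma,w),\mathbb{Z}},\mathbb{Z})$. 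Hence $\ker c_1=\mathfrak{t}^\ast_{(\Sigma,w),\mathbb{Z}}/\mathfrak{t}^\ast_{\mathbb{Z}}$; finally, applying $\Hom(-,\mathbb{Z})$ to $0\to\mathfrak{t}_{(\Sigma,w),\mathbb{Z}}\to\mathfrak{t}_{\mathbb{Z}}\to\pi_1^{\mathrm{orb}}\mathcal{X}_{\Sigma,w}\to 0$ (Proposition \ref{prop_orbi-pi_1}) identifies this with $\operatorname{Ext}^1(\pi_1^{\mathrm{orb}}\mathcal{X}_{\Sigma,w},\mathbb{Z})=(\pi_1^{\mathrm{orb}}\mathcal{X}_{\Sigma,w})\check{\ }$.

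\emph{Main obstacle.} Everything after Step 1 is lattice bookkeeping; the one non-formal point is the linearizability statement. I would take care that the argument there rests only on triviality of $\mathcal{L}$ on the open orbit (i.e. $\Pic\mathbb{T}_{\mathbb{C}}=0$) together with the fact that a rational section trivializing over the open orbit has torus-invariant divisor on every affine chart, rather than re-using the combinatorial classification; equivalently one can package this as the vanishing of the \v{C}ech obstruction to gluing the local linearizations over the cover $\{U_\tau\}_{\tau\in\Sigma}$, which is immediate since that cover contains the open orbit $U_0$ inside every chart, so its nerve is a simplex and supports no $H^1$ with constant coefficients.
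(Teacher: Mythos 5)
Your proposal is correct and follows essentially the same route as the paper: the linearizability argument --- trivialize on the developable charts (whose Picard groups vanish), observe that the trivializing sections differ by characters on overlaps because every chart contains the open orbit, and glue by fixing the linearization at one maximal cone (your \v{C}ech reformulation over the simplex-nerve cover is exactly this coning-off) --- is precisely the content of the paper's proof. Your Step~2, identifying $\ker c_1$ with $\mathfrak{t}^\ast_{(\Sigma,w),\mathbb{Z}}/\mathfrak{t}^\ast_{\mathbb{Z}}$ via the standard divisor sequence and then dualizing the short exact sequence of Proposition~\ref{prop_orbi-pi_1}, is left implicit in the paper (absorbed into diagram~(\ref{diagram_orbi-Pic}) and Corollary~\ref{cor_orbi-c1}), and your write-up of it is sound.
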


\begin{remark}
Note the analogy with the fact that for topological line bundles on a space $X$
\[
 H^2(X;\mathbb{Z})_{\textrm{tor}} \cong (\pi_1(X)^{\textrm{ab}})_{\textrm{tor}} 
\]
by the universal coefficient and Hurewicz theorems.
\end{remark}

\begin{remark}\label{rmk_integrality-condition}
For non-singular toric varieties the canonical arrow on the left of diagram (\ref{diagram_orbi-Pic}) is of course well known to be an isomorphism
\[
 \Pic X_\Sigma = H^2(X_\Sigma,\mathbb{Z}) = \Big( \bigoplus_{\rho \in \Sigma(1)} \rho^\ast_{\mathbb{Z}} \Big) / \mathfrak{t}^\ast_{\mathbb{Z}}
\]
and is usually expressed using the matrix $N_\Sigma$ made up by the ray generators $\nu_\rho \in \rho\cap\mathfrak{t}_{\mathbb{Z}}$ as
\[
 H^2(X_\Sigma,\mathbb{Z}) \cong (\mathbb{Z}^{\Sigma(1)})^\ast / {}^t\!N_\Sigma \mathfrak{t}^\ast_{\mathbb{Z}} .
\]
Here $\mathbb{Z}^{\Sigma(1)}$ denotes the free $\mathbb{Z}$-module on the set of rays in the fan, the star the dual module, and $N_\Sigma: \mathbb{Z}^{\Sigma(1)} \to \mathfrak{t}_{\mathbb{Z}}$ is the linear map sending the generator associated to a ray $e_\rho$ to the primitive lattice vector in the ray $\nu_\rho$.

Upon tensoring with the reals (or rationals), cohomology classes $c \in H^2(X_\Sigma,\mathbb{R})$ are identified with equivalence classes of vectors $h_c + {}^t\!N_\Sigma \mathfrak{t}^\ast \in (\mathbb{R}^{\Sigma(1)})^\ast / {}^t\!N_\Sigma \mathfrak{t}^\ast$, and the integrality condition takes the form
\[
 c \in H^2(X_\Sigma,\mathbb{Z}) \subset  H^2(X_\Sigma,\mathbb{R})
 \quad \iff \quad
 ( h_c + {}^t\!N_\Sigma \mathfrak{t}^\ast ) \cap (\mathbb{Z}^{\Sigma(1)})^\ast \neq \emptyset ,
\]
integrality of a class $c$ is determined by whether the corrseponding subspace $h_c + {}^t\!N_\Sigma \mathfrak{t}^\ast$ does intersect the lattice of the ambient space.

It is worth noting that for more general $\Sigma$, the subspace ${}^t\!N_\Sigma \mathfrak{t}^\ast_{\mathbb{Z}} \subset (\mathbb{Z}^{\Sigma(1)})^\ast$ and the canonical identification $H^2(X_\Sigma,\mathbb{Q}) \cong (\mathbb{Q}^{\Sigma(1)})^\ast / {}^t\!N_\Sigma \mathfrak{t}^\ast_{\mathbb{Q}}$ are still well-defined, but the identification of line bundles with translates of ${}^t\!N_\Sigma \mathfrak{t}^\ast$ that meet the original lattice breaks down, as we saw in Example \ref{ex_quadric_cone}. It resurrects for orbi-line bundles, however, as we will see shortly.
\end{remark}

\begin{proof}
From Definition \ref{dfn_orbi-line-bundle}, $\mathcal{L}$ is defined by a set of $\widetilde{\Gamma}_\tau$-equivariant line bundles $L_\tau \to \widetilde{U}_\tau$ together with bundle isomorphisms $\phi_{\tau_2,\tau_1}: \psi^{\tau_1}_{\tau_2}{}^\ast L_{\tau_2}\left\vert_{U_{\tau_1}}\right. \to L_{\tau_2}$ for $\tau_1 \subset \tau_2$. As every vertex chart $\widetilde{U}_\sigma$ for a maximal cone is an affine space, all $L_\tau$ are trivial line bundles, and we can choose non-vanishing sections
\[
 1_\tau \in H^0(\widetilde{U}_\tau,L_\tau^\times) .
\]
We may assume these to coincide at some point over the open orbit.
Associated to each non-vanishing section is a character $\widetilde{\chi}_\tau^{\mathcal{L}}: \widetilde{\Gamma}_\tau \to U(1)$ in whose eigenspace $1_\tau$ lies.

Comparing two trivializing sections $1_\tau, 1_{\tau'}$ on $\widetilde{U}_{\tau \cap \tau'}$ we find that they have to differ by a non-vanishing function, that (as $\widetilde{U}_{\tau \cap \tau'}$ is affine $\widetilde{\mathbb{T}}_{\tau \cap \tau'}$-toric) has to be given by a character
\begin{equation}\label{1_sigma_transition}
 \psi^{\tau \cap \tau'}_{\tau}{}^\ast 1_\tau  = \chi_{h_{\tau \tau'}}  \psi^{\tau \cap \tau'}_{\tau'}{}^\ast 1_{\tau'}
 \qquad \textrm{ where } \qquad h_{\tau\tau'}\in \widetilde{\mathfrak{t}}^\ast_{\tau \cap \tau',\mathbb{Z}} .
\end{equation}
Now consider any maximal cone $\sigma_0$ and fix a lift of the torus action $\widetilde{\mathbb{T}}_{\sigma_0}$ to $L_{\sigma_0}$ -- in other words, choose a representative $m_{\sigma_0} \in \widetilde{\mathfrak{t}}^\ast_{\sigma_0,\mathbb{Z}}$ for the character $\widetilde{\chi}_{\sigma_0}^{\mathcal{L}} = m_{\sigma_0} + \mathfrak{t}^\ast_{\mathbb{Z}}$. Define
\[
 m_\sigma := m_{\sigma_0}+h_{\sigma \sigma_0} .
\]
We claim that for all maximal cones $\sigma\in\Sigma(n)$
\begin{eqnarray*}
 & \textrm{(i)} &  m_\sigma + \mathfrak{t}^\ast_{\mathbb{Z}} = \widetilde{\chi}_{\sigma}^{\mathcal{L}} ; \\
 & \textrm{(ii)} & m_\sigma \in \widetilde{\mathfrak{t}}^\ast_{\sigma,\mathbb{Z}} ; \\
 & \textrm{(iii)} & m_\sigma - m_{\sigma'} \in (\sigma \cap \sigma')^\perp ,
\end{eqnarray*}
and note that by Theorem \ref{theorem_Picorb} (or also directly from the definition) this suffices to linearize $\mathcal{L}$. But (i) follows from the fact that each trivializing section $1_\sigma$ lies in the corresponding eigenspace of $\widetilde{\chi}_{\sigma}^{\mathcal{L}}$ and the identity (\ref{1_sigma_transition}), and in turn implies (ii). The last point (iii) holds since (\ref{1_sigma_transition}) holds on all $\widetilde{U}_{\sigma\cap\sigma'}$, whence the character $\chi_{m_\sigma-m_{\sigma'}}$ must not vanish on the minimal orbit in $\widetilde{U}_{\sigma\cap\sigma'}$ whose stabilizer has Lie algebra $\langle \sigma \cap \sigma' \rangle$.
\end{proof}

\begin{corollary}\label{cor_orbi-c1}
For any orbi-line bundle $\mathcal{L}$, there exist integers $d>0,l_\rho$ such that $w_\rho \vert d$ for all rays $\rho \in \Sigma(1)$ and
\[
 \mathcal{L}^d \cong \pi^\ast \mathcal{O}_{X_\Sigma}(\sum_{\rho \in \Sigma(1)} l_\rho \frac{d}{w_\rho}D_\rho) .
\]
Thus the image $c_1(\Picorb \mathcal{X}_{\Sigma,w}) \subset \Pic X_\Sigma \otimes \mathbb{Q}$ is isomorphic to the bottom right entry in the following commutative diagram
\begin{equation}\label{diagram_rational_c1}
 \begin{tikzcd}
 \Picorb \mathcal{X}_{\Sigma,w} \ar{r}{c_1} \ar{d}{\cong} &  \Pic X_\Sigma \otimes \mathbb{Q}  \\
 \left( \bigoplus_{\rho \in \Sigma(1)} \frac{1}{w_\rho}\rho^\ast_{\mathbb{Z}} \right) / \mathfrak{t}^\ast_{\mathbb{Z}}\ar[two heads]{r} & \left( \bigoplus_{\rho \in \Sigma(1)} \frac{1}{w_\rho}\rho^\ast_{\mathbb{Z}} \right) / \mathfrak{t}^\ast_{(\Sigma,w),\mathbb{Z}} \ar[hookrightarrow]{u}
 \end{tikzcd} .
\end{equation}
In particular, a rational divisor class $c \in \Pic X_\Sigma \otimes \mathbb{Q} = H^2(X_\Sigma,\mathbb{Q})$ is the first Chern class of an orbi-line bundle if and only if the corrsponding subspace ${}^t\!N_\Sigma \mathfrak{t}^\ast$ in $(\mathbb{Q}^{\Sigma(1)})^\ast$ meets the new lattice $\bigoplus \frac{1}{w_\rho}\mathbb{Z}e_\rho^\ast$
\[
 ( h_c + {}^t\!N_\Sigma \mathfrak{t}^\ast ) \cap \bigoplus_{\rho \in \Sigma(1)} \frac{1}{w_\rho}\mathbb{Z}e_\rho^\ast  \neq \emptyset .
\]
\end{corollary}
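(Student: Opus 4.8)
The plan is to bootstrap from the combinatorial description of the linearized Picard group in Theorem \ref{theorem_Picorb} and the linearizability statement of Corollary \ref{cor_linearizability}, reducing everything to the classical correspondence on the coarse space between $\mathbb{T}$-linearized line bundles and compatible collections of genuine characters $\{m_\sigma\in\mathfrak{t}^\ast_{\mathbb{Z}}\}_{\sigma\in\Sigma(n)}$ with $m_{\sigma_1}-m_{\sigma_2}\in(\sigma_1\cap\sigma_2)^\perp$. First I would use Corollary \ref{cor_linearizability} to equip $\mathcal{L}$ with a linearization, given by a collection $\{m_\sigma\in\widetilde{\mathfrak{t}}^\ast_{\sigma,\mathbb{Z}}\}_{\sigma\in\Sigma(n)}$ satisfying that compatibility, and write $l_\rho:=m_\sigma(w_\rho\nu_\rho)\in\mathbb{Z}$ as in the proof of Theorem \ref{theorem_Picorb} (independent of the chosen $\sigma\supset\rho$). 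Each quotient $\widetilde{\mathfrak{t}}^\ast_{\sigma,\mathbb{Z}}/\mathfrak{t}^\ast_{\mathbb{Z}}$ is finite, so one may pick $d>0$ to be a common multiple of all the $w_\rho$ and of the orders of these finite groups; then $w_\rho\mid d$ for every $\rho$ and $d\,m_\sigma\in\mathfrak{t}^\ast_{\mathbb{Z}}$ for every $\sigma$. The collection $\{d\,m_\sigma\}$ still satisfies $d\,m_{\sigma_1}-d\,m_{\sigma_2}\in(\sigma_1\cap\sigma_2)^\perp$, so by the classical toric correspondence (see \cite{oda:1988}) it is the linearizing datum of the line bundle $L:=\mathcal{O}_{X_\Sigma}\big(\sum_{\rho}(d\,m_\sigma)(\nu_\rho)D_\rho\big)=\mathcal{O}_{X_\Sigma}\big(\sum_{\rho}l_\rho\tfrac{d}{w_\rho}D_\rho\big)$ on $X_\Sigma$; since the pullback $\pi^\ast L$ has linearizing datum $\{d\,m_\sigma\}=\{m_\sigma\}^{\otimes d}$ on the orbifold, we get $\pi^\ast L\cong\mathcal{L}^d$, which is the first assertion.

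Next I would read off the diagram (\ref{diagram_rational_c1}). Under the isomorphism $\Picorb\mathcal{X}_{\Sigma,w}\cong\big(\bigoplus_\rho\tfrac1{w_\rho}\rho^\ast_{\mathbb{Z}}\big)/\mathfrak{t}^\ast_{\mathbb{Z}}$ of Corollary \ref{cor_linearizability} and the identification $H^2(X_\Sigma,\mathbb{Q})\cong\big(\bigoplus_\rho\rho^\ast_{\mathbb{Q}}\big)/\mathfrak{t}^\ast_{\mathbb{Q}}$ of Remark \ref{rmk_integrality-condition}, the computation above shows that $c_1(\mathcal{L})=\tfrac1d c_1(L)$ is the class of $(h_\rho)_\rho$ with $h_\rho=\tfrac{l_\rho}{w_\rho}\nu_\rho^\ast$; i.e.\ $c_1$ is induced by the lattice inclusion $\bigoplus_\rho\tfrac1{w_\rho}\rho^\ast_{\mathbb{Z}}\hookrightarrow\bigoplus_\rho\rho^\ast_{\mathbb{Q}}$ followed by the quotient by $\mathfrak{t}^\ast_{\mathbb{Q}}$. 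Hence $c_1$ kills exactly the image in $\big(\bigoplus_\rho\tfrac1{w_\rho}\rho^\ast_{\mathbb{Z}}\big)/\mathfrak{t}^\ast_{\mathbb{Z}}$ of $\mathfrak{t}^\ast_{\mathbb{Q}}\cap\bigoplus_\rho\tfrac1{w_\rho}\rho^\ast_{\mathbb{Z}}$, which a one-line check identifies with $\mathfrak{t}^\ast_{(\Sigma,w),\mathbb{Z}}$ (an $m\in\mathfrak{t}^\ast_{\mathbb{Q}}$ lies in every $\tfrac1{w_\rho}\rho^\ast_{\mathbb{Z}}$ precisely when $m(w_\rho\nu_\rho)\in\mathbb{Z}$ for all $\rho$, i.e.\ $m\in\mathfrak{t}^\ast_{(\Sigma,w),\mathbb{Z}}$). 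So $c_1$ factors as the quotient onto $\big(\bigoplus_\rho\tfrac1{w_\rho}\rho^\ast_{\mathbb{Z}}\big)/\mathfrak{t}^\ast_{(\Sigma,w),\mathbb{Z}}$ followed by an injection into $H^2(X_\Sigma,\mathbb{Q})$ (by the same intersection computation), giving the bottom row of (\ref{diagram_rational_c1}); comparing with Corollary \ref{cor_linearizability}, whose kernel description says $\mathfrak{t}^\ast_{(\Sigma,w),\mathbb{Z}}/\mathfrak{t}^\ast_{\mathbb{Z}}=\ker c_1=(\Picorb\mathcal{X}_{\Sigma,w})_{\textrm{tor}}$, shows the square commutes and that $c_1(\Picorb\mathcal{X}_{\Sigma,w})$ is precisely the bottom-right group.

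Finally the ``in particular'' is a restatement of this image in the language of Remark \ref{rmk_integrality-condition}: writing a rational class as $h_c+{}^t\!N_\Sigma\mathfrak{t}^\ast\in(\mathbb{Q}^{\Sigma(1)})^\ast/{}^t\!N_\Sigma\mathfrak{t}^\ast$ with $e_\rho^\ast\leftrightarrow\nu_\rho^\ast$, the above says that $c_1(\Picorb\mathcal{X}_{\Sigma,w})$ consists of those cosets meeting the enlarged lattice $\bigoplus_\rho\tfrac1{w_\rho}\mathbb{Z}e_\rho^\ast$, exactly the asserted criterion. The one step that needs genuine care is the choice of $d$: it is not enough to clear the weight denominators $w_\rho$ -- one must also make the linearizing characters $m_\sigma$ into honest characters of $\mathbb{T}$, so that $\mathcal{L}^d$ descends to a \emph{Cartier} (and not merely Weil) divisor class on the possibly singular $X_\Sigma$; this is exactly the phenomenon of Example \ref{ex_quadric_cone}, and getting it right is what makes the formula for $\pi^\ast L$ a statement about line bundles rather than Weil divisors. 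Everything else is bookkeeping with the identifications of Remark \ref{rmk_integrality-condition} and Corollary \ref{cor_linearizability}.
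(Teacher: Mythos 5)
Your argument is correct and is essentially the paper's intended one: the corollary is stated without a separate proof, being treated as an immediate consequence of Theorem \ref{theorem_Picorb}, Corollary \ref{cor_linearizability}, Definition \ref{dfn_Picorb} and the identifications of Remark \ref{rmk_integrality-condition}, and your write-up fills in exactly that chain (linearize, clear denominators so the $m_\sigma$ become honest characters and the divisor becomes Cartier, then identify $\ker c_1$ with $\mathfrak{t}^\ast_{(\Sigma,w),\mathbb{Z}}/\mathfrak{t}^\ast_{\mathbb{Z}}$). Your closing caveat about choosing $d$ so that $d\,m_\sigma\in\mathfrak{t}^\ast_{\mathbb{Z}}$, and not merely $w_\rho\mid d$, is precisely the point illustrated by Example \ref{ex_quadric_cone}.
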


\begin{examplectd}{\ref{ex_quadric_cone}} For any simplicial fan $\Sigma$, such as the one defining the quadric cone, the orbi-line bundles with respect to the canonical orbifold structure $\mathcal{X}_{\Sigma,1}$ are thus characterized by the same condition as line bundles in the smooth case,
\[
 c \in c_1(\Pic \mathcal{X}_{\Sigma,1})
 \subset H^2(X_\Sigma,\mathbb{R})
 \quad \iff \quad
 ( h_c + {}^t\!N_\Sigma \mathfrak{t}^\ast ) \cap (\mathbb{Z}^{\Sigma(1)})^\ast \neq \emptyset .
\]
\end{examplectd}

\begin{examplectd}{\ref{ex_P2_2,2,3}}
Returning to the example of the projective plane with orbifold structures of weights $2,2$ and $3$ along the toric divisors, we depict in Figure \ref{fig_olbs_weightedP2} the dual lattices associated to the maximal cones as well as several examples of the smallest moment polytope representing ``orbi-integral symplectic structures''.

\begin{figure}[!htb]
 \centering
\subfloat[LoF 6a][ \\ Dual cones $\sigma\check{{}_{i}}$ and lattices {$\widetilde{\mathfrak{t}}^\ast_{\sigma_i,\mathbb{Z}}$}]{
\label{fig_6a} 
\begin{tikzpicture}
\node[anchor=center,inner sep=0] (image6a) at (0,0) {\includegraphics[width=0.33\textwidth]{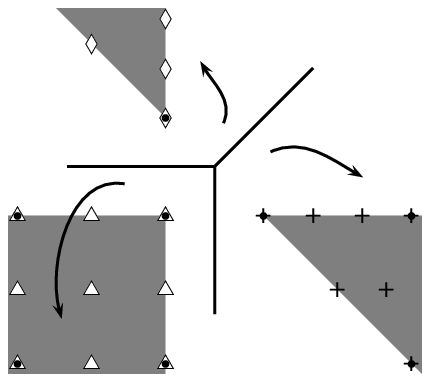}};
\begin{scope}[x={(image6a.east)-(image6a.west)},y={(image6a.north)-(image6a.south)}]
\node[] at (0,0.3){\small $\sigma_N$};
\node[] at (-0.25,0){\small $\sigma_{SW}$};
\node[] at (0.2,0.1){\small $\sigma_E$};
\node[] at (-0.05,0.8){\small $\sigma\check{{}_N}$};
\node[] at (-0.7,-0.8){\small $\sigma\check{{}_{SW}}$};
\node[] at (0.75,0.05){\small $\sigma\check{{}_E}$};
\end{scope}
\end{tikzpicture}}%
\hspace{2em}
\subfloat[LoF 6b][ \\ Several orbi-line bundles of fractional Chern class]{
\label{fig_6b} 
\begin{tikzpicture}
\node[anchor=center,inner sep=0] (image6b) at (0,0) {\includegraphics[width=0.33\textwidth]{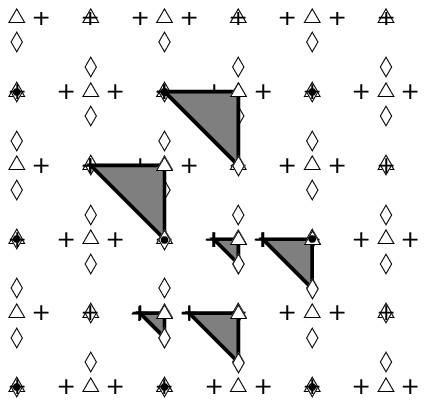}};
\begin{scope}[x={(image6b.east)-(image6b.west)},y={(image6b.north)-(image6b.south)}]
\end{scope}
\end{tikzpicture}}%
\caption{Orbi-line bundles on $\mathbb{P}^2_{2,2,3}$}
\label{fig_olbs_weightedP2}
\end{figure}
It is in particular visible that the rational Chern class alone (given by the side length of the triangle in question) does determine elements in the orbifold Picard group only up to the action of the dual of the orbifold fundamental group.

Using Theorem \ref{theorem_h0} below also the dimension of global spaces is easily read off this diagram: while both orbi-line bundles with $c_1(\mathcal{L}) = \frac{1}{6}\mathcal{O}_{\mathbb{P}^2}(1)$ have no global sections, for $c_1(\mathcal{L}) = \frac{1}{3}\mathcal{O}_{\mathbb{P}^2}(1)$ one has a section while the other has none, and for $c_1(\mathcal{L}) = \frac{1}{2}\mathcal{O}_{\mathbb{P}^2}(1)$ both have one section.
\end{examplectd}

\begin{remark}\label{rmk_Fantechi}
It might be instructive to compare some of these results with those of \cite{fantechi.mann.nironi:2010}: associated to each orbi-line bundle we have a collection of characters, either of the stabilizer groups along the torus orbits
\[
 \chi_\tau^{\mathcal{L}} : \Gamma_\tau \to U(1) ,
\]
or of the covering groups of the developable charts
\[
 \widetilde{\chi}_\tau^{\mathcal{L}} : \widetilde{\Gamma}_\tau \to U(1) .
\]
In their entirety these define an element in the projective limit
\[
 \lim_{\underset{\tau \in \Sigma}{\longleftarrow}} \Gamma\check{{}_\tau}
 \cong
 \lim_{\underset{\tau \in \Sigma}{\longleftarrow}} \widetilde{\Gamma}\check{{}_\tau} .
\]
In case the underlying variety $X_\Sigma$ is smooth (which is the case if and only if the left arrow of diagram (\ref{diagram_orbi-Pic}) is an isomorphism), this homomorphism gives the right half of the short-exact sequence defined by pull-back
\[
 0 \to \Pic X_\Sigma \to \Picorb \mathcal{X}_{\Sigma,w} \to  \lim_{\underset{\tau \in \Sigma}{\longleftarrow}} \Gamma\check{{}_\tau} \to 0,
\]
and the projective limit equals the product (or sum) of the duals of the (cyclic) stabilizer groups $\Gamma_\rho \cong \mathbb{Z}/w_\rho\mathbb{Z}$ of all $n-1$-dimensional torus orbits; in particular this recovers the short-exact sequence (5.6) of \cite{fantechi.mann.nironi:2010}.

In general, the left term of the short-exact sequence \cite[(5.2)]{fantechi.mann.nironi:2010} is the
Picard stack of the associated canonical stack, and by \cite[Rmk. 4.5.(2)]{fantechi.mann.nironi:2010} coincides with the class group $A^1(X_\Sigma)$ of the underlying toric variety, which in our setting is recovered as the orbifold Picard group $\Picorb \mathcal{X}_{\Sigma,1}$.
\end{remark}

\section{Global sections}

To any linearized orbi-line bundle, or compatible collection of characters $m_\sigma \in \widetilde{\mathfrak{t}}^\ast_{\sigma,\mathbb{Z}}$ we can associate a (possibly degenerate or empty) convex \emph{Newton polytope}
\[
 P_{ \{m_\sigma \}_{\sigma \in \Sigma(n)} } :=  \bigcap_{\sigma \in \Sigma(n)} (m_\sigma+\sigma\check{\ }) ;
\]
for ample line bundles on the toric variety, this is the usual Newton polytope. Also as in the case of toric varieties, if the collections $\{m_\sigma\}_{\sigma\in\Sigma(n)}$ and $\{l_\rho\}_{\rho \in \Sigma(1)}$ correspond to each other under the identification in Theorem \ref{theorem_Picorb}, the Newton polytope is alternatively described as intersection of half-spaces
\[
 P = \bigcap_{\rho \in \Sigma(1)} \{x \in \mathfrak{t}^\ast : \langle x, w_\rho\nu_\rho \rangle \geq l_\rho \} .
\]

\begin{theorem}\label{theorem_h0} For any linearized orbi-line bundle $\mathcal{L} \leftrightarrow \{m_\sigma \in \widetilde{\mathfrak{t}}^\ast_{\sigma,\mathbb{Z}} \}_{\sigma\in\Sigma(n)}$ satisfying the previous condition, the dimension of the space of global sections is
\[
 h^0(\mathcal{X}_{\Sigma,w},\mathcal{L}) = \# \mathfrak{t}^\ast_{\mathbb{Z}} \cap P_{ \{m_\sigma \}_{\sigma \in \Sigma(n)} } ;
\]
more precisely, each character in the intersection on the right-hand side appears with multiplicity one in the character decomposition of the representation $\mathbb{T} \circlearrowright H^0(\mathcal{L})$.
\end{theorem}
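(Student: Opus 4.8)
The plan is to reduce the computation of global sections on the orbifold $\mathcal{X}_{\Sigma,w}$ to the same kind of character count that works for line bundles on toric varieties, exploiting the developable atlas $\{\widetilde{U}_\tau \ds \widetilde{\Gamma}_\tau\}_{\tau \in \Sigma}$ established in Lemma \ref{lemma_developable} and Corollary \ref{corollary_inj_limit}. Since $\mathcal{X}_{\Sigma,w}$ is an injective limit of the developable charts, a global section of $\mathcal{L}$ is the same thing as a compatible collection of global sections $s_\sigma \in H^0(\widetilde{U}_\sigma, L_\sigma)^{\widetilde{\Gamma}_\sigma}$ over the vertex charts (the glueing over lower-dimensional cones being automatic once the vertex charts agree, because every $\widetilde{U}_\tau$ receives a local diffeomorphism from some $\widetilde{U}_\sigma$). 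So first I would write $H^0(\mathcal{X}_{\Sigma,w},\mathcal{L})$ as the inverse limit $\varprojlim_{\sigma} H^0(\widetilde{U}_\sigma, L_\sigma)^{\widetilde{\Gamma}_\sigma}$ over the maximal cones, with the transition maps given by the trivializations $\phi$ of Definition \ref{dfn_orbi-line-bundle}.

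Next I would analyze a single vertex chart. Fixing a maximal cone $\sigma$, the chart $\widetilde{U}_\sigma$ is the affine toric variety for the cone $\sigma$ with respect to the refined lattice $\widetilde{\mathfrak{t}}_{\sigma,\mathbb{Z}}$, hence $\widetilde{U}_\sigma \cong \mathbb{A}^n_{\mathbb{C}}$, and $L_\sigma$ is the trivial bundle with the $\widetilde{\mathbb{T}}_\sigma$-linearization twisted by the character $m_\sigma \in \widetilde{\mathfrak{t}}^\ast_{\sigma,\mathbb{Z}}$. Using the character decomposition of $H^0(\widetilde{U}_\sigma,\mathcal{O})$ recalled in \S\ref{subsection_toric-varieties} (now over the lattice $\widetilde{\mathfrak{t}}^\ast_{\sigma,\mathbb{Z}}$ and the dual cone $\sigma\check{\ }$), the sections of $L_\sigma$ with a given $\widetilde{\mathbb{T}}_\sigma$-weight $\widetilde{m}$ are one-dimensional precisely when $\widetilde{m} - m_\sigma \in \sigma\check{\ } \cap \widetilde{\mathfrak{t}}^\ast_{\sigma,\mathbb{Z}}$, i.e. when $\widetilde{m} \in (m_\sigma + \sigma\check{\ }) \cap \widetilde{\mathfrak{t}}^\ast_{\sigma,\mathbb{Z}}$; the $\widetilde{\Gamma}_\sigma$-invariant ones are those with $\widetilde{m} \in \mathfrak{t}^\ast_{\mathbb{Z}}$, since $\widetilde{\Gamma}_\sigma$ acts on the $\widetilde{m}$-eigenspace by the character $\widetilde{m}$ reduced modulo $\mathfrak{t}^\ast_{\mathbb{Z}}$ (it acts trivially iff $\widetilde{m}$ descends to an honest character of $\mathbb{T}$). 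So $H^0(\widetilde{U}_\sigma,L_\sigma)^{\widetilde{\Gamma}_\sigma}$ has a basis indexed by $(m_\sigma + \sigma\check{\ }) \cap \mathfrak{t}^\ast_{\mathbb{Z}}$, with each such $\mathbb{T}$-character appearing once.

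Finally I would assemble the charts. The compatibility conditions from Theorem \ref{theorem_Picorb} say that the $m_\sigma$ differ by elements of $(\sigma_1 \cap \sigma_2)^\perp$, so the trivializations identify a section of $\mathbb{T}$-weight $m$ over $\widetilde{U}_{\sigma_1}$ with the section of the same weight $m$ over $\widetilde{U}_{\sigma_2}$ whenever $m$ lies in both $m_{\sigma_1}+\sigma_1\check{\ }$ and $m_{\sigma_2}+\sigma_2\check{\ }$ — this is exactly the statement that the monomial $\chi_{h_{\sigma_1\sigma_2}}$ relating the two trivializing sections carries one eigenspace to the other. Hence a compatible family of sections is the same as a function on $\bigcap_\sigma (m_\sigma+\sigma\check{\ }) \cap \mathfrak{t}^\ast_{\mathbb{Z}} = P_{\{m_\sigma\}} \cap \mathfrak{t}^\ast_{\mathbb{Z}}$, giving both the dimension formula and the multiplicity-one statement for the $\mathbb{T}$-action. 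The main obstacle I anticipate is bookkeeping the two lattices carefully — one must keep straight that sections over $\widetilde{U}_\sigma$ naturally carry weights in the finer lattice $\widetilde{\mathfrak{t}}^\ast_{\sigma,\mathbb{Z}}$ while descent to $\mathcal{X}_{\Sigma,w}$ (i.e. $\widetilde{\Gamma}_\sigma$-invariance) forces the surviving weights into the coarse lattice $\mathfrak{t}^\ast_{\mathbb{Z}}$ — and to check that the limit over all $\sigma$ (rather than over a single chart, which would only see $m_\sigma + \sigma\check{\ }$) correctly intersects these half-space translates; a minor additional point is to confirm that no finiteness or convergence issue arises, which follows since $P_{\{m_\sigma\}}$ is a bounded polytope by completeness of $\Sigma$.
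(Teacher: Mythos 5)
Your proposal is correct and follows essentially the same route as the paper's proof: restrict to the vertex charts $\widetilde{U}_\sigma$, decompose sections of the trivialized bundle into $\widetilde{\mathbb{T}}_\sigma$-weight spaces indexed by $(m_\sigma+\sigma\check{\ })\cap\widetilde{\mathfrak{t}}^\ast_{\sigma,\mathbb{Z}}$, observe that $\widetilde{\Gamma}_\sigma$-invariance selects exactly the weights lying in the coarse lattice $\mathfrak{t}^\ast_{\mathbb{Z}}$, and patch over all maximal cones to obtain the intersection $P_{\{m_\sigma\}}\cap\mathfrak{t}^\ast_{\mathbb{Z}}$. The only difference is that you spell out the glueing step (the inverse limit and the role of the transition characters $\chi_{h_{\sigma_1\sigma_2}}$) in more detail than the paper, which simply says the local descriptions patch.
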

\begin{proof} This is practically obvious from the construction at this point: over a vertex orbifold chart $\widetilde{U}_\sigma \to U_\sigma$, our line bundle has the linearized trivialization $ \widetilde{\mathbb{T}}_\sigma \circlearrowright \widetilde{U}_\sigma \times \mathbb{C}$ where the action on $\mathbb{C}$ is given by the character $m_\sigma \in \widetilde{\mathfrak{t}}^\ast_{\sigma,\mathbb{Z}}$. Its holomorphic sections are by definition
\[
 H^0(\mathcal{X}_{\Sigma,w}\vert_{U_\sigma},\mathcal{L}\vert_{U_\sigma}) = H^0(\widetilde{U}_\sigma,\widetilde{U}_\sigma\times\mathbb{C})^{\widetilde{\Gamma}_\sigma} ,
\]
and they come with a natural $\mathbb{T}$-action: for any $t\in\mathbb{T}$, we choose an arbitrary lift $\widetilde{t} \in \widetilde{\mathbb{T}}_\sigma$ and set
\[
 (t \cdot \widetilde{s}) (\widetilde{x}) = \widetilde{t} \widetilde{s} (\widetilde{t}^{-1}\widetilde{x}),\qquad \forall \widetilde{x} \in \widetilde{U}_\sigma, \widetilde{s} \in  H^0(\mathcal{X}_{\Sigma,w}\vert_{U_\sigma},\mathcal{L}\vert_{U_\sigma}).
\]
This is well-defined since alternative choices for $\widetilde{t}$ differ by elements of $\widetilde{\Gamma}_\sigma$ that do not affect $\widetilde{s}$ (nor its image). As the character decomposition of the $\widetilde{\mathbb{T}}_{\sigma}$-action on $\widetilde{U}_\sigma \times \mathbb{C}$ is given by
\[
 \widetilde{\mathbb{T}}_{\sigma} \circlearrowright H^0(\widetilde{U}_\sigma,\widetilde{U}_\sigma\times\mathbb{C})
 \cong
 \bigoplus_{\widetilde{m} \in \widetilde{\mathfrak{t}}^\ast_{\sigma,\mathbb{Z}} \cap (m_\sigma+\sigma\check{\ })}
 \mathbb{C} \widetilde{s}_{\chi_{\widetilde{m}}} ,
\]
the $\widetilde{\Gamma}_\sigma$-invariant subspace together with its $\mathbb{T}$-weights is obtained by restricting to the characters that pull back from $\mathbb{T}$,
\[
 \mathbb{T} \circlearrowright H^0(\widetilde{U}_\sigma,\widetilde{U}_\sigma\times\mathbb{C})^{\widetilde{\Gamma}_\sigma}
 \cong
 \bigoplus_{m \in \mathfrak{t}^\ast_{\mathbb{Z}} \cap (m_\sigma+\sigma\check{\ })}
 \mathbb{C} \widetilde{s}_{\chi_m} .
\]
By patching these local conditions over all vertex charts, and hence over all of $\mathcal{X}_{\Sigma,w}$ the claimed description of the space of global sections follows.
\end{proof}

\begin{remark}\label{rmk_Sakai}
Our description of global sections at first sight seems to be in conflict with the main result of the preprint \cite{sakai:2014}, where it is claimed (Theorem 4.4) that non-integral polytopes on stacks that would correspond to non-simply connect orbifolds have $0$ as quantization: in fact it seems to us that Assumption 4.2. of that paper implies that the second clause in Theorem 4.4 does not occur, and hence there is no conflict with our results.

The ``genuine'' orbi-line bundles \emph{not} in $\pi^\ast \Pic \mathbb{P}^1 \subset \Picorb \mathbb{P}^1_{n,n}$ in \cite[Examples 4.3.(iv) and (v)]{sakai:2014} in fact do not arise from the reduction process employed there, while they are accessible by the glueing construction we use.
\end{remark}

\section{Applications}

In this section we present two (related) applications for the combinatorial description we have achieved: first, we discuss which Chern classes on toric orbifolds are represented by orbi-line bundles, and then we apply these results to determine explicitly the ``Bohr--Sommerfeld orbi-line bundles'' associated to the restriction of a toric action to a subtorus.

\subsection{Quantization of symplectic toric orbifolds}\label{sect_application}

As an application of our results we consider the classification of symplectic toric orbifolds by Lerman and Tolman and determine which of the symplectic forms $\omega_{P,w}$ are represented by orbi-line bundles. 

\begin{theorem*}[\cite{lerman.tolman:1997}] Symplectic toric orbifolds $(\mathcal{X},\omega)$ are in bijective correspondence with convex polytopes $P \subset \mathfrak{t}^\ast$ up to translation, and weight functions $w$ on the set of facets such that
\begin{itemize}
 \item the dual fan $\Sigma_P$ of $P$ is defined in $\mathfrak{t}^\ast_{\mathbb{Z}} \otimes \mathbb{Q}$ and simplicial, and
 \item facets $F$ of $P$ (or rays of $\Sigma_P$) are decorated with positive integers $w_F \in \mathbb{N}_+$ .
\end{itemize}
\end{theorem*}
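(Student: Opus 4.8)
The plan is to follow the Delzant construction and its converse, adapted to the orbifold category, which is the route taken by Lerman and Tolman.

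\emph{From a labelled polytope to a symplectic toric orbifold.} Write $P = \bigcap_{F} \{ x \in \mathfrak{t}^\ast : \langle x, \nu_F \rangle \geq -\lambda_F \}$, with $\nu_F \in \mathfrak{t}_{\mathbb{Z}}$ the primitive inward facet normals and $\lambda_F \in \mathbb{R}$ the support numbers, and let $d$ be the number of facets. I would start from $(\mathbb{C}^d, \omega_{\mathrm{std}})$ with its standard Hamiltonian $\mathbb{T}^d$-action and consider the linear map $\beta \colon \mathbb{Z}^d \to \mathfrak{t}_{\mathbb{Z}}$ sending $e_F \mapsto w_F \nu_F$. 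Since the dual fan $\Sigma_P$ is simplicial, at each vertex of $P$ the $n$ vectors $w_F \nu_F$ for the incident facets are linearly independent; hence $\beta$ becomes surjective after $\otimes \mathbb{Q}$, its image is the sublattice $\mathfrak{t}_{(\Sigma_P, w), \mathbb{Z}}$, and dualising yields a (generally disconnected) subgroup $N \subset \mathbb{T}^d$ with $1 \to N \to \mathbb{T}^d \to \mathbb{T} \to 1$ and $\pi_0 N$ dual to $\mathfrak{t}_{\mathbb{Z}} / \mathfrak{t}_{(\Sigma_P,w),\mathbb{Z}}$. The same linear independence shows that $0$ is a regular value of the $N$-moment map \emph{in the orbifold sense} at the level prescribed by the $\lambda_F$, so the symplectic reduction $\mathcal{X}_{P,w} := \mathbb{C}^d \ds N$ is a symplectic orbifold carrying an effective Hamiltonian action of $\mathbb{T}^d / N \cong \mathbb{T}$, with moment polytope $P$ and with generic orbifold structure group of order $w_F$ along the reduced divisor over each facet $F$.

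\emph{From a symplectic toric orbifold to a labelled polytope.} For the converse I would invoke the convexity theorem for orbifold moment maps to see that $\mu(\mathcal{X})$ is a convex polytope $P$, use effectiveness to force $P$ to be $n$-dimensional with dual fan rational over $\mathbb{Q}$, and apply the equivariant local normal form near a $\mathbb{T}$-fixed point (an orbifold Marle--Guillemin--Sternberg / equivariant Darboux statement) to see that a neighbourhood of each fixed-point fibre is modelled on $\mathbb{C}^n / \Gamma$ for a finite abelian $\Gamma$; this forces every vertex of $P$ to be simple, and the order of the generic structure group along the preimage of a facet $F$ then defines the weight $w_F \in \mathbb{N}_+$.

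\emph{Bijectivity, and the main obstacle.} The delicate point is that these two assignments are mutually inverse, which reduces to the uniqueness statement: two symplectic toric orbifolds with the same labelled polytope are $\mathbb{T}$-equivariantly symplectomorphic. Following Delzant, I would take a cover of $\mathcal{X}$ adapted to the face structure of $P$, produce over each piece an equivariant symplectomorphism onto a standard local model read off from the combinatorial data (a reduction of some $\mathbb{C}^k$ as above, compatible with the developable charts $\widetilde{U}_\tau$ glued in Corollary~\ref{corollary_inj_limit}), and patch them by an equivariant Moser/Weinstein interpolation carried out on uniformising orbifold charts, checking that the finite structure groups $\widetilde{\Gamma}_\sigma$ agree on overlaps. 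I expect the main obstacle to be exactly this gluing step: establishing the orbifold equivariant normal form near the singular strata and running the partition-of-unity Moser argument on orbifold charts while keeping the discrete stabilisers under control; the remaining combinatorial input (convexity, rationality, simplicity, and the weights) is comparatively routine.
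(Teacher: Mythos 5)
The paper does not prove this statement at all: it is quoted verbatim from Lerman--Tolman \cite{lerman.tolman:1997} as an external input, so there is no internal proof to compare against. Your sketch correctly reproduces the argument of the cited source --- the generalized Delzant construction $\mathcal{X}_{P,w} = \mathbb{C}^d \ds N$ with $N$ the (possibly disconnected) kernel of $e_F \mapsto w_F\nu_F$, and the converse via convexity plus equivariant local normal forms --- and you rightly flag uniqueness as the delicate step. The only notable difference is that Lerman and Tolman organize that uniqueness step as a sheaf-cohomology vanishing argument over the (contractible) polytope for the sheaf of local equivariant symplectomorphisms, rather than an explicit Moser/partition-of-unity patching on orbifold charts; the two framings are equivalent in substance, but the cohomological packaging is what keeps the discrete stabilizers under control without case-by-case bookkeeping.
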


We denote the corresponding symplectic toric orbifold by $(\mathcal{X}_{\Sigma_P,w},\omega_{P,w})$, and note \cite[Thm. 1.7]{lerman.tolman:1997} that it carries a canonical complex analytic (or algebraic) structure.
Just as in the case of toric varieties the actual identification of the symplectic and the complex construction is \emph{not} unique, but its choice is irrelevant to answering the following questions:

\begin{question} When does the symplectic form $\omega_{P,w}$ in this classification represent the first Chern class of an orbi-line-bundle $\mathcal{L} \in \Picorb \mathcal{X}_{\Sigma_P,w}$?
\end{question}
\begin{question} If this is the case, and given a compatible complex structure on $\mathcal{X}_{\Sigma_P,w}$, what is the dimension of the space of sections $H^0(\mathcal{X}_{\Sigma_P,w},\mathcal{L})$?
\end{question}

\begin{theorem}\label{theorem_prequantizability} Let $P \subset \mathfrak{t}^\ast$ be a polytope with rational simplicial dual fan $\Sigma=\Sigma_P$ and weight function $w$, and consider the toric orbifold $\mathcal{X}_{\Sigma,w} \to X_{\Sigma}$; then the following are equivalent:
\begin{itemize}
 \item[(i)] there exists an orbi-line bundle $\mathcal{L}\to\mathcal{X}_{\Sigma,w}$ representing the class defined by $P$ in $\Pic X_\Sigma \otimes \mathbb{R}$ (which therefore is a fortiori rational)
\[
 c_1(\mathcal{L}) = [\omega_{P,w}] \in \Pic X_\Sigma \otimes \mathbb{Q} \cong H^2(X_\Sigma,\mathbb{Q}) ;
\]
 \item[(ii)] there exists a translate of $P$ and characters $m_\sigma \in \widetilde{\mathfrak{t}}^\ast_{\sigma,\mathbb{Z}}$ such that
\[
 P = \bigcap_{\sigma \in \Sigma(n)} (m_\sigma+\sigma\check{\ }) =
 \conv \left\{ m_\sigma \right\}_{\sigma \in \Sigma(n)} ;
\]
\item[(iii)]
there exist integers $l_\rho \in \mathbb{Z}$ such that $P$ is a translate of the intersection
\[
 P = \bigcap_{\rho \in \Sigma(1)} \{x \in \mathfrak{t}^\ast : \langle x, w_\rho\nu_\rho \rangle \geq l_\rho \} ;
\]
\item[(iv)] 
the affine subspace defined by the Chern class $[\omega_{P,w}]$ intersects the weighted character lattice nontrivially,
\[
 ( h_{[\omega_{P,w}]} + {}^t\!N_\Sigma \mathfrak{t}^\ast ) \cap \bigoplus_{\rho \in \Sigma(1)} \frac{1}{w_\rho}\mathbb{Z}e_\rho^\ast \neq \emptyset .
\]
\end{itemize}
In this case, the non-equivalent orbi-line bundles representing this rational Chern class are a torsor under the group of characters of the orbifold fundamental group.
\end{theorem}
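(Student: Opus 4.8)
The plan is to prove the four conditions equivalent by running the cycle of implications (i) $\Rightarrow$ (iv) $\Rightarrow$ (iii) $\Rightarrow$ (ii) $\Rightarrow$ (i) --- in fact (ii), (iii), (iv) turn out to be pairwise equivalent almost tautologically once the combinatorial dictionary is set up --- and then to read off the final torsor statement from Corollary \ref{cor_linearizability}. Almost nothing new is required: conditions (i)--(iv) are four rephrasings of one another under Theorem \ref{theorem_Picorb} and Corollary \ref{cor_orbi-c1}, the only analytic input being the classical identification of the de Rham class $[\omega_{P,w}]$ with the support-number data of the polytope. Indeed, (i) $\Leftrightarrow$ (iv) is immediate from the last displayed equivalence of Corollary \ref{cor_orbi-c1} applied to $c = [\omega_{P,w}]$: such a class lies in $c_1(\Picorb\mathcal{X}_{\Sigma,w})$ exactly when $h_c + {}^t\!N_\Sigma\mathfrak{t}^\ast$ meets $\bigoplus_\rho \frac{1}{w_\rho}\mathbb{Z}e_\rho^\ast$. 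This also settles the parenthetical rationality assertion, since by Definition \ref{dfn_Picorb} every orbi-line bundle has rational Chern class, so (i) forces $[\omega_{P,w}] \in H^2(X_\Sigma,\mathbb{Q})$ (as is equally plain from (iii) or (iv)).

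To invoke Corollary \ref{cor_orbi-c1} here I would first recall, following \cite{lerman.tolman:1997}, that under the identification $H^2(X_\Sigma,\mathbb{Q}) \cong (\mathbb{Q}^{\Sigma(1)})^\ast / {}^t\!N_\Sigma\mathfrak{t}^\ast_{\mathbb{Q}}$ of Remark \ref{rmk_integrality-condition} the symplectic class $[\omega_{P,w}]$ is represented by the tuple of heights $(a_\rho)_\rho$ of any presentation $P = \{ x \in \mathfrak{t}^\ast : \langle x,\nu_\rho\rangle \ge a_\rho \ \forall \rho \}$ --- equivalently by the divisor class $\sum_\rho a_\rho D_\rho$ on the coarse space --- and that translating $P$ by $v \in \mathfrak{t}^\ast$ alters $(a_\rho)$ by $ {}^t\!N_\Sigma v$ and hence leaves the class unchanged. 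Granting this, (iv) $\Leftrightarrow$ (iii) is a tautology: (iv) says that the coset $(a_\rho) + {}^t\!N_\Sigma\mathfrak{t}^\ast$ contains a point of $\bigoplus_\rho \frac{1}{w_\rho}\mathbb{Z}e_\rho^\ast$; writing such a point as $(l_\rho/w_\rho)_\rho$ with $l_\rho \in \mathbb{Z}$, the polytope $\bigcap_\rho \{ x : \langle x, w_\rho\nu_\rho\rangle \ge l_\rho \}$ carries the same normal fan $\Sigma = \Sigma_P$ and differs from $P$ only by the translation encoded in the $ {}^t\!N_\Sigma v$ term, which is exactly (iii). The point to watch is that this rebuilt polytope must be a genuine translate of $P$ rather than a lower-dimensional degeneration; this is automatic because $P$ is full-dimensional with normal fan \emph{equal to} $\Sigma_P$, so any polytope with the same height data modulo $ {}^t\!N_\Sigma\mathfrak{t}^\ast$ has that same normal fan.

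Next I would establish (iii) $\Rightarrow$ (ii). With $l_\rho \in \mathbb{Z}$ realising a translate $P'$ of $P$ as in (iii), the normal fan of $P'$ is $\Sigma$, so the vertices of $P'$ are indexed by the maximal cones: for $\sigma = \langle \nu_{\rho_1},\dots,\nu_{\rho_n}\rangle \in \Sigma(n)$ the associated vertex $m_\sigma$ is the unique solution of $\langle m_\sigma, w_{\rho_i}\nu_{\rho_i}\rangle = l_{\rho_i}$, i.e. $m_\sigma = \sum_i l_{\rho_i} \bigl( \frac{1}{w_{\rho_i}}\nu_{\rho_i,\sigma}^\ast \bigr)$ where $\{ \nu_{\rho_i,\sigma}^\ast \}$ is dual to $\{ \nu_{\rho_i} \}$. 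Since $\sigma$ is simplicial, $\{ w_{\rho_i}\nu_{\rho_i} \}$ is a $\mathbb{Z}$-basis of $\widetilde{\mathfrak{t}}_{\sigma,\mathbb{Z}}$ (cf. Lemma \ref{lemma_developable}), hence $\{ \frac{1}{w_{\rho_i}}\nu_{\rho_i,\sigma}^\ast \}$ is a $\mathbb{Z}$-basis of the dual lattice $\widetilde{\mathfrak{t}}^\ast_{\sigma,\mathbb{Z}}$, so $m_\sigma \in \widetilde{\mathfrak{t}}^\ast_{\sigma,\mathbb{Z}}$ precisely because every $l_{\rho_i}$ is an integer. As a full-dimensional polytope with normal fan $\Sigma$ is both the convex hull of its vertices and the intersection of the corner cones $m_\sigma + \sigma\check{\ }$, we obtain $P' = \conv\{ m_\sigma \}_{\sigma \in \Sigma(n)} = \bigcap_{\sigma \in \Sigma(n)} (m_\sigma + \sigma\check{\ })$, which is (ii). (The reverse implication (ii) $\Rightarrow$ (iii), not needed for the cycle, follows by setting $l_\rho := m_\sigma(w_\rho\nu_\rho)$, which is an integer since $w_\rho\nu_\rho \in \widetilde{\mathfrak{t}}_{\sigma,\mathbb{Z}}$ and is independent of the choice of $\sigma \supset \rho$ because the $m_\sigma$ are the vertices of one polytope.)

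It remains to close the cycle with (ii) $\Rightarrow$ (i) and to identify the fibre of $c_1$. Given (ii), the collection $\{ m_\sigma \}$ is compatible in the sense of Theorem \ref{theorem_Picorb}: for two maximal cones meeting in a common face $\tau = \sigma_1 \cap \sigma_2$, the vertices $m_{\sigma_1}, m_{\sigma_2}$ both lie on the face of $P$ dual to $\tau$, which is contained in a single translate of $\tau^\perp$, so $m_{\sigma_1} - m_{\sigma_2} \in (\sigma_1\cap\sigma_2)^\perp$. Theorem \ref{theorem_Picorb} then produces an orbi-line bundle $\mathcal{L}_{\{ m_\sigma \}}$, and tracing it through the isomorphisms of Corollary \ref{cor_orbi-c1} shows that its rational Chern class is represented by $(l_\rho/w_\rho)_\rho = (a_\rho)_\rho$, i.e. equals $[\omega_{P,w}]$, which is (i). For the torsor statement, $c_1 \colon \Picorb\mathcal{X}_{\Sigma,w} \to H^2(X_\Sigma,\mathbb{Q})$ is a group homomorphism whose kernel is $\mathfrak{t}^\ast_{(\Sigma,w),\mathbb{Z}}/\mathfrak{t}^\ast_{\mathbb{Z}} = \pi_1^{orb}(\mathcal{X}_{\Sigma,w})\check{\ }$ by Corollary \ref{cor_linearizability}; under the equivalent conditions $[\omega_{P,w}]$ lies in its image, so the fibre $c_1^{-1}([\omega_{P,w}])$ --- which is precisely the set of equivalence classes of orbi-line bundles representing that Chern class --- is a coset of $\ker c_1$, hence a torsor under the character group of the orbifold fundamental group. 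The only genuinely non-formal step in this whole argument is the input recalled at the start of the second paragraph: the identification of $[\omega_{P,w}]$ with the combinatorial height data and its invariance under translation of $P$; once that is in place, and one has checked the non-degeneracy point noted there, everything else is direct bookkeeping against Theorem \ref{theorem_Picorb} and Corollaries \ref{cor_linearizability} and \ref{cor_orbi-c1}.
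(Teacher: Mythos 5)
Your proposal is correct and follows essentially the same route as the paper: both arguments reduce the theorem to bookkeeping against Theorem \ref{theorem_Picorb}, Corollary \ref{cor_linearizability} and Corollary \ref{cor_orbi-c1}, with the single non-formal input being the identification of $[\omega_{P,w}]$ with the support-number data of $P$ (which the paper routes through Satake's isomorphism between orbifold de Rham cohomology and $H^2(X_\Sigma,\mathbb{R})$, and you through the Lerman--Tolman/height-data description --- the same fact in two guises). Your version merely arranges the equivalences as a cycle and spells out the vertex computation for (iii) $\Rightarrow$ (ii) more explicitly than the paper's appeal to the two descriptions of Newton polytopes.
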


\begin{proof}
The equivalence of (i) and (ii) is immediate from our characterization of the orbifold Picard group in Theorem \ref{theorem_Picorb} and Satake's isomorphism between the orbifold de Rham cohomology (cf. \cite{satake:1956} or \cite[Thm. (1.9)]{blache:1996}), which fits into the diagram
\[
 \begin{tikzcd}
 \Picorb \mathcal{X}_{\Sigma,w} \ar{r} \ar{d} & \Pic X_\Sigma \otimes \mathbb{R} \ar{d}{\cong} \\
 H^2_{dR}(\mathcal{X}_{\Sigma,w},\mathbb{R}) \ar{r}{\textrm{Satake}}[swap]{\cong} & H^2(X_\Sigma,\mathbb{R})
 \end{tikzcd} .
\]
The kernel of the top (and thus by the diagram also of the left) morphism is the torsion subgroup, which by Corollary \ref{cor_linearizability} coincides with the dual of the orbifold fundamental group.

As for (iii), it is tantamount to (ii) by the alternative descriptions of Newton polytopes as either intersections of dual maximal cones or half-spaces, and (iv) is the characterization of Chern classes of orbi-line bundles from Corollary \ref{cor_orbi-c1}.

Finally, the last assertion is just a restatement of Corollary \ref{cor_linearizability}.
\end{proof}

Directly from Theorem \ref{theorem_h0} now follows the answer to the second question:
\begin{corollary}
  For $\mathcal{L} = \mathcal{L}_{\{m_\sigma\}}$ with $m_\sigma \in \widetilde{\mathfrak{t}}^\ast_{\sigma,\mathbb{Z}}$  and $P = \conv \{ m_\sigma \in \widetilde{\mathfrak{t}}^\ast_{\sigma,\mathbb{Z}} \}$ as in Theorem \ref{theorem_prequantizability} (ii), the dimension of the quantization space $H^0(\mathcal{X}_{\Sigma_P,w},\mathcal{L})$ is given by the number of lattice points in the Newton polytope
\[
 h^0(\mathcal{X}_{\Sigma_P,w},\mathcal{L}) = \# P \cap \mathfrak{t}^\ast_{\mathbb{Z}} .
\]
\end{corollary}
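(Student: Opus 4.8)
The plan is to read this off directly from Theorem~\ref{theorem_h0}, using only the identification of polytopes already recorded in Theorem~\ref{theorem_prequantizability}. Applied to the linearized orbi-line bundle $\mathcal{L} = \mathcal{L}_{\{m_\sigma\}}$, Theorem~\ref{theorem_h0} gives
\[
 h^0(\mathcal{X}_{\Sigma_P,w},\mathcal{L}) = \# \mathfrak{t}^\ast_{\mathbb{Z}} \cap P_{\{m_\sigma\}}, \qquad P_{\{m_\sigma\}} := \bigcap_{\sigma\in\Sigma(n)} (m_\sigma + \sigma\check{\ }),
\]
each lattice point appearing with multiplicity one in the character decomposition of $\mathbb{T}\circlearrowright H^0(\mathcal{L})$.

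It remains only to observe that, by the hypothesis placed on $\{m_\sigma\}$ --- namely that we are in the situation of Theorem~\ref{theorem_prequantizability}~(ii) --- the Newton polytope $P_{\{m_\sigma\}}$ coincides on the nose with $P$: indeed (ii) asserts precisely $P = \bigcap_{\sigma\in\Sigma(n)}(m_\sigma+\sigma\check{\ }) = \conv\{m_\sigma\}$ as a subset of $\mathfrak{t}^\ast$. Substituting $P = P_{\{m_\sigma\}}$ into the displayed formula yields $h^0(\mathcal{X}_{\Sigma_P,w},\mathcal{L}) = \# P \cap \mathfrak{t}^\ast_{\mathbb{Z}}$, which is the claim.

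There is nothing hard here; the only point worth emphasising for the reader is that the equality is genuinely a statement about the chosen representative $\mathcal{L}$ and not merely about its rational Chern class. The translate of $P$ realising clause~(ii) --- equivalently, the compatible collection $\{m_\sigma\}$, equivalently (by Corollary~\ref{cor_linearizability}) the point in the $\pi_1^{orb}(\mathcal{X}_{\Sigma_P,w})\check{\ }$-torsor of orbi-line bundles with this Chern class --- is what pins $P$ down as an honest subset of $\mathfrak{t}^\ast$, and it is the lattice-point count of \emph{that} subset which equals $h^0$. Different representatives give translates of $P$ with, in general, different numbers of lattice points, consistent with the non-constancy of $h^0$ emphasised throughout the paper.
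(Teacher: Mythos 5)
Your proposal is correct and is exactly the paper's argument: the corollary is stated as following ``directly from Theorem \ref{theorem_h0}'', i.e.\ one applies that theorem to $\mathcal{L}_{\{m_\sigma\}}$ and identifies the Newton polytope $\bigcap_{\sigma}(m_\sigma+\sigma\check{\ })$ with $P$ via clause (ii) of Theorem \ref{theorem_prequantizability}. Your closing remark about the count depending on the chosen representative (translate of $P$) rather than on the rational Chern class is also in line with the paper's emphasis, though it is not needed for the proof itself.
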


\subsection{Reduction with respect to a sub-torus and Bohr--Sommerfeld conditions}

While this is not the place to enter the details of geometric quantization in mixed polarizations (that we hope to come back to elsewhere), we here wish to illustrate the relevance of our main results for this subject in the toric case.

Consider a toric orbifold $\mathcal{X}_{P,w}$ and a sub-torus $\mathbb{T}_1 \subset \mathbb{T}$, so that we have a diagram
\[
  \begin{tikzcd}
    \mathbb{T}_1 \arrow[r,phantom,"\subset"] & \mathbb{T} \arrow[r,phantom,"\circlearrowright"] \arrow[d] & (\mathcal{X}_{P,w},\omega_{P,w}) \arrow[r, "\mu"] \arrow[rd, "\mu_1"] \arrow[d, swap, "p"] & P \arrow[r,phantom, "\subset"] \arrow[d, "\pi"] & \mathfrak{t}^\ast \arrow[d,"\pi"] \\
    & \mathbb{T} / \mathbb{T}_1 \arrow[r,phantom,"\circlearrowright"] & \mathcal{X}_{P,w} / \mathbb{T}_1 \arrow[r, swap, "q"] & P_1 \arrow[r,phantom, "\subset"] & \mathfrak{t}^\ast_1 \\
  \end{tikzcd} .
\]
\begin{proposition} For any regular value $\alpha \in P_1$, the symplectic reduction
  \[
    \mathcal{X}_{\alpha} := \mu_1^{-1}(\alpha) / \mathbb{T}_1
  \]
is a $\mathbb{T}/\mathbb{T}_1$-toric orbifold associated with the polytope $P_\alpha = \pi^{-1}(\alpha) \cap P$, 
\[
 \mathcal{X}_\alpha \cong \mathcal{X}_{P_\alpha,w^{\alpha}} ,
\]
where the weights are determined by the following condition: for any facet $F_\alpha \subset P_\alpha$ and $F$ the corresponding facet of $P$ such that $F_\alpha = F\cap P_\alpha$,
\[
 w_{F}\nu_{F} + (\mathfrak{t}_1)_{\mathbb{Z}} = w_{F_{\alpha}}^\alpha \nu_{F_{\alpha}} .
\]
\end{proposition}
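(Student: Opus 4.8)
The plan is to recognize the reduction $\mathcal{X}_\alpha$ as a symplectic toric orbifold for the quotient torus $\mathbb{T}/\mathbb{T}_1$ and then to extract its Lerman--Tolman data. Since $\alpha$ is a regular value, $\mathbb{T}_1$ acts locally freely on $\mu_1^{-1}(\alpha)$, so standard symplectic reduction yields an orbifold $\mathcal{X}_\alpha$ of real dimension $2n-2\dim\mathbb{T}_1=2\dim(\mathbb{T}/\mathbb{T}_1)$; as $\mathbb{T}$ is abelian the residual $\mathbb{T}/\mathbb{T}_1$-action is well defined, Hamiltonian, and effective (checked on an orbit over a point of $\operatorname{int}P\cap\pi^{-1}(\alpha)$, which is nonempty because regularity forces $\alpha\in\operatorname{int}P_1$). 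Hence $\mathcal{X}_\alpha$ is a symplectic toric orbifold for $\mathbb{T}/\mathbb{T}_1$, so by the Lerman--Tolman classification it is determined up to isomorphism by a rational, simplicial, labelled polytope in $(\mathfrak{t}/\mathfrak{t}_1)^\ast$.

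Identifying the polytope is immediate: the reduced moment map $\bar\mu\colon\mathcal{X}_\alpha\to(\mathfrak{t}/\mathfrak{t}_1)^\ast\cong\mathfrak{t}_1^\perp$ is the map induced by $\mu$, and since $\mu_1^{-1}(\alpha)=\mu^{-1}(\pi^{-1}(\alpha))$ while $\mu$ is onto $P$, the image of $\bar\mu$ is $\pi^{-1}(\alpha)\cap P=P_\alpha$, translated into the linear subspace $\mathfrak{t}_1^\perp$ by a choice of lift of $\alpha$ that is irrelevant up to translation of polytopes. Its dual fan is rational and simplicial by the first paragraph, so only the weights remain.

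For the weights I would first match facets. A facet $F_\alpha$ of $P_\alpha$ corresponds to a $\mathbb{T}/\mathbb{T}_1$-orbit of dimension $\dim(\mathbb{T}/\mathbb{T}_1)-1$ in $\mathcal{X}_\alpha$, which is the image of a single $\mathbb{T}$-orbit $O$ of $\mathcal{X}_{P,w}$ on which $\mathbb{T}_1$ acts locally freely; since $\dim(\text{image})=\dim O-\dim\mathbb{T}_1$ this forces $\dim O=n-1$, i.e. $O$ lies over a point in the relative interior of a facet $F$ of $P$, so $F_\alpha=F\cap P_\alpha$ with $F_\alpha$ meeting the relative interior of $F$. (A facet $F$ contributes a facet of $P_\alpha$ exactly when $\nu_F\notin\mathfrak{t}_1$: if $\nu_F\in\mathfrak{t}_1$ the circle $\exp(\mathbb{R}\nu_F)$ lies in $\mathbb{T}_1$, so $\mathbb{T}_1$ is not locally free over $F$ and regularity keeps $\pi^{-1}(\alpha)$ off $\operatorname{relint}F$.) Then I would compute the generic orbifold structure group of $\mathcal{X}_\alpha$ along the divisor over $F_\alpha$ using a uniformizing chart of $\mathcal{X}_{P,w}$ near a generic point of the invariant divisor over $F$: there the orbifold is modelled on $\mathbb{C}\times(\mathbb{C}^\ast)^{n-1}$ with generic structure group $\mathbb{Z}/w_F$ on the $\mathbb{C}$-factor, which is rotated by the circle $\exp(\mathbb{R}\nu_F)$. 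As $\nu_F\notin\mathfrak{t}_1$, the $\mathbb{T}_1$-reduction is transverse to this $\mathbb{C}$-factor, so it survives with structure group $\mathbb{Z}/w_F$ intact, while $\exp(\mathbb{R}\nu_F)$ is replaced by its image in $\mathbb{T}/\mathbb{T}_1$ — the circle with cocharacter lattice $\mathbb{Z}\cdot(\nu_F+(\mathfrak{t}_1)_{\mathbb{Z}})\subset\mathfrak{t}_{\mathbb{Z}}/(\mathfrak{t}_1)_{\mathbb{Z}}$. Comparing with the normal form of $\mathcal{X}_\alpha$ along its divisor over $F_\alpha$, whose structure group $\mathbb{Z}/w^\alpha_{F_\alpha}$ lies in $\exp(\mathbb{R}\nu_{F_\alpha})$ with $\nu_{F_\alpha}$ the \emph{primitive} cocharacter of $\mathbb{T}/\mathbb{T}_1$ in that direction (well defined since $\mathbb{T}_1$ is closed, so $(\mathfrak{t}_1)_{\mathbb{Z}}$ is primitive and $\mathfrak{t}_{\mathbb{Z}}/(\mathfrak{t}_1)_{\mathbb{Z}}$ torsion-free), yields exactly $w_F\,\nu_F+(\mathfrak{t}_1)_{\mathbb{Z}}=w^\alpha_{F_\alpha}\,\nu_{F_\alpha}$.

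The one genuinely delicate step is the last: controlling how the finite structure group and the rotation speed of the relevant circle transform under symplectic reduction at a locally free but non-free level set, which reduces to a lattice-index computation in $\mathfrak{t}_{\mathbb{Z}}/(\mathfrak{t}_1)_{\mathbb{Z}}$ (the new weight is the old one times $[\,\mathfrak{t}_{\mathbb{Z}}\cap(\mathbb{R}\nu_F+\mathfrak{t}_1):\mathbb{Z}\nu_F+(\mathfrak{t}_1)_{\mathbb{Z}}\,]$). A tidier, fully mechanical alternative is to start from a Delzant-type presentation $\mathcal{X}_{P,w}=\mathbb{C}^{\Sigma_P(1)}\ds N$ with $N=\ker(\mathbb{T}^{\Sigma_P(1)}\to\mathbb{T})$ built from the weighted normals $w_\rho\nu_\rho$, and perform the $\mathbb{T}_1$-reduction in stages as $\mathcal{X}_\alpha=\mathbb{C}^{\Sigma_P(1)}\ds\widehat{N}$, where $\widehat{N}$ is the preimage of $\mathbb{T}_1$; the surviving coordinate hyperplanes are exactly those with $\nu_\rho\notin\mathfrak{t}_1$, and their weighted normals in the Lerman--Tolman data of the quotient are the images of $w_\rho\nu_\rho$ under $\mathfrak{t}_{\mathbb{Z}}\twoheadrightarrow\mathfrak{t}_{\mathbb{Z}}/(\mathfrak{t}_1)_{\mathbb{Z}}$, which is the claimed formula.
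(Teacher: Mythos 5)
Your proof is correct and follows essentially the same route as the paper's: the polytope $P_\alpha$ is identified via the Lerman--Tolman classification, and the weight formula comes from the surjection of the facet-stabilizer circle $\mathbb{S}^1_F \twoheadrightarrow \mathbb{S}^1_{F_\alpha}$ with finite kernel of order $k = [\,\mathbb{Z}\nu_{F_\alpha} : \mathbb{Z}(\nu_F + (\mathfrak{t}_1)_{\mathbb{Z}})\,]$, giving $w^\alpha_{F_\alpha} = k\,w_F$. The only quibble is the parenthetical ``$F$ contributes a facet of $P_\alpha$ exactly when $\nu_F\notin\mathfrak{t}_1$'': the condition $\nu_F\notin\mathfrak{t}_1$ is necessary but not sufficient (the hyperplane of $F$ may meet $\pi^{-1}(\alpha)$ transversally while $F\cap\pi^{-1}(\alpha)$ is still empty), though this does not affect the weight computation for the facets that do appear.
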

\begin{proof}
  The characterization of the moment polytope $P_\alpha$ for the $\mathbb{T}/\mathbb{T}_1$-action on $\mathcal{X}_\alpha$ follows from \cite[Lemma 3.9 and Theorem 5.2]{lerman.tolman:1997}.

  Considering the circle $\mathbb{S}^1_F \subset \mathbb{T}$ that stabilizes points in the relative interior of facet $F\subset P$, it surjects onto the corresponding stabilizer
  \[
    \mathbb{S}^1_F \twoheadrightarrow \mathbb{S}^1_{F_\alpha} \subset \mathbb{T}/\mathbb{T}_1
  \]
  with kernel $\mathbb{S}^1_F \cap \mathbb{T}_1 \cong \mathbb{Z}/ k \mathbb{Z}$, where $k$ is determined by
\[
 \nu_{F} + (\mathfrak{t}_1)_{\mathbb{Z}} = k \nu_{F_{\alpha}} ,
\]
  and the condition for the weights follows.
\end{proof}

\begin{proposition} The following conditions are equivalent:
  \begin{itemize}
    \item[(i)] the restriction of any orbi-line bundle $\mathcal{L}$ with $c_1(\mathcal{L}) = [\omega_{P,w}]$ to a fiber $\mu_1^{-1}(\alpha)$ descends to an orbi-line bundle on the symplectic reduction $\mathcal{X}_\alpha$,
  \[
  \exists \mathcal{L}_\alpha \in \Picorb{\mathcal{X}_\alpha} \textrm{ s.th. }
  \begin{tikzcd}
    \left.\mathcal{L}\right\vert_{\mu_1^{-1}(\alpha)} \cong p^\ast \mathcal{L}_\alpha \arrow[r] \arrow[d] & \mathcal{L}_\alpha \arrow[d] \\
    \mu_1^{-1}(\alpha) \arrow[r, "p"] & \mathcal{X}_\alpha
  \end{tikzcd} ;
  \]
\item[(ii)] the class of the reduced symplectic form is orbi-integral,
  \[
    [\omega_\alpha] \in c_1 \left ( \Picorb(\mathcal{X}_{\alpha}) \right) ;
  \]
    \item[(iii)] the affine subspace $\pi^{-1}(\alpha)$ meets the lattice $\mathfrak{t}^\ast_{\mathbb{Z}}$,
      \[
        \pi^{-1}(\alpha) \cap \mathfrak{t}^\ast_{\mathbb{Z}} \neq \emptyset .
      \]
  \end{itemize}
\end{proposition}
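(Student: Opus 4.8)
The plan is to close the cycle (iii) $\Rightarrow$ (i) $\Rightarrow$ (ii) $\Rightarrow$ (iii), using the combinatorial machinery already in place. First I would note that by the previous proposition the reduced space $\mathcal{X}_\alpha$ is the toric orbifold $\mathcal{X}_{P_\alpha, w^\alpha}$ associated to the slice polytope $P_\alpha = \pi^{-1}(\alpha) \cap P$, and $[\omega_\alpha]$ is the class it defines in $\Pic X_{\Sigma_{P_\alpha}} \otimes \mathbb{Q}$; by Theorem \ref{theorem_prequantizability} (applied to $\mathcal{X}_\alpha$) condition (ii) is equivalent to the existence of a translate of $P_\alpha$ whose vertices lie in the appropriate weighted character lattices $\widetilde{\mathfrak{t}}^\ast_{\sigma_\alpha,\mathbb{Z}}$ for the sub-quotient torus $\mathbb{T}/\mathbb{T}_1$. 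The geometric heart of the argument is then to identify this translate of $P_\alpha$ with the actual slice $\pi^{-1}(\alpha)\cap P$ when the latter sits at a lattice point of $\mathfrak{t}^\ast_{\mathbb{Z}}$ under $\pi$; equivalently, to track how the half-space description $P = \bigcap_\rho \{\langle x, w_\rho \nu_\rho\rangle \geq l_\rho\}$ restricts to the affine slice.

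For (iii) $\Rightarrow$ (i): pick $m \in \pi^{-1}(\alpha) \cap \mathfrak{t}^\ast_{\mathbb{Z}}$. Translating $P$ by $-m$ we may assume $0 \in \pi^{-1}(\alpha)$, i.e.\ $\alpha = 0 \in \mathfrak{t}_1^\ast$. Now given any orbi-line bundle $\mathcal{L}$ with $c_1(\mathcal{L}) = [\omega_{P,w}]$, linearize it (Corollary \ref{cor_linearizability}) so it is given by a compatible system $\{m_\sigma \in \widetilde{\mathfrak{t}}^\ast_{\sigma,\mathbb{Z}}\}$; the restriction $\mathcal{L}\vert_{\mu_1^{-1}(0)}$ descends to $\mathcal{X}_0$ precisely when the linearizing characters restrict compatibly to the covering tori $\widetilde{\mathbb{T}}_{\sigma_\alpha}$ of $\mathbb{T}/\mathbb{T}_1$ — and since $\alpha$ is a lattice point, the weight condition $w_F \nu_F + (\mathfrak{t}_1)_{\mathbb{Z}} = w_{F_\alpha}^\alpha \nu_{F_\alpha}$ from the previous proposition ensures that the integers $l_\rho$ defining $\mathcal{L}$ pass directly to the integers defining an orbi-line bundle on $\mathcal{X}_0$ via Theorem \ref{theorem_Picorb}. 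The implication (i) $\Rightarrow$ (ii) is formal: if $\mathcal{L}\vert_{\mu_1^{-1}(\alpha)} \cong p^\ast \mathcal{L}_\alpha$, then $c_1(\mathcal{L}_\alpha)$ is by naturality the class of $\omega_\alpha$, so $[\omega_\alpha] \in c_1(\Picorb \mathcal{X}_\alpha)$.

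The implication I expect to be the main obstacle is (ii) $\Rightarrow$ (iii). Here one knows only that \emph{some} translate of $P_\alpha$ has vertices in the weighted character lattices for $\mathbb{T}/\mathbb{T}_1$, and must deduce that the \emph{specific} affine subspace $\pi^{-1}(\alpha)$ already meets $\mathfrak{t}^\ast_{\mathbb{Z}}$. The point is that a translation of $P_\alpha$ within $\mathfrak{t}^\ast$ that is not induced by a translation of the ambient $P$ along $\pi^{-1}(\alpha)$ could a priori achieve integrality of the reduced vertices without $\pi^{-1}(\alpha)$ being integral; one has to rule this out. The resolution should come from comparing the two lattices: the preimage $\pi^{-1}((\mathfrak{t}_1^\ast)_{\mathbb{Z}})$-cosets and the character lattice $(\mathfrak{t}/\mathfrak{t}_1)^\ast_{\mathbb{Z}} = (\mathfrak{t}_1)^\perp \cap \mathfrak{t}^\ast_{\mathbb{Z}}$ fit into the short exact sequence $0 \to (\mathfrak{t}/\mathfrak{t}_1)^\ast_{\mathbb{Z}} \to \mathfrak{t}^\ast_{\mathbb{Z}} \to (\mathfrak{t}_1)^\ast_{\mathbb{Z}} \to 0$ dual to $0 \to (\mathfrak{t}_1)_{\mathbb{Z}} \to \mathfrak{t}_{\mathbb{Z}} \to (\mathfrak{t}/\mathfrak{t}_1)_{\mathbb{Z}} \to 0$, and integrality of the reduced polytope is an assertion about the $(\mathfrak{t}/\mathfrak{t}_1)^\ast$-component, which only fixes $\pi^{-1}(\alpha) \cap \mathfrak{t}^\ast_{\mathbb{Z}}$ up to the translation freedom \emph{within} the slice — but an honest translate of the class is already accounted for, so $\alpha$ itself must be integral. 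Writing this cleanly, probably via the snake-lemma style diagram chase comparing $\Picorb \mathcal{X}_{P,w}$, its restriction to the slice, and $\Picorb \mathcal{X}_\alpha$, is the step that needs care; the rest is bookkeeping with the dictionary of Theorems \ref{theorem_Picorb} and \ref{theorem_prequantizability}.
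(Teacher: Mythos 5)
Your overall strategy is the paper's: normalize $\mathcal{L}$ to $\mathcal{L}_{\{m_\sigma\}}$ with $P=\conv\{m_\sigma\}$ via Theorem \ref{theorem_prequantizability}, and obtain (iii) $\Rightarrow$ (i) by using a lattice point $m\in\pi^{-1}(\alpha)\cap\mathfrak{t}^\ast_{\mathbb{Z}}$ to kill the $\mathbb{T}_1$-weights on the fibers over $\mu_1^{-1}(\alpha)$ --- your ``translate $P$ by $-m$'' is literally the paper's ``multiply the $\mathbb{T}_1$-action by $\chi_\alpha^{-1}$'', since $\chi_\alpha$ makes sense as a character of $\mathbb{T}_1$ exactly when $\alpha\in(\mathfrak{t}_1)^\ast_{\mathbb{Z}}=\pi(\mathfrak{t}^\ast_{\mathbb{Z}})$. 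So far the two arguments coincide.

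The step you flag as the main obstacle, (ii) $\Rightarrow$ (iii), is where your sketch does not yet close. The sentence ``an honest translate of the class is already accounted for, so $\alpha$ itself must be integral'' is not an argument, and indeed the implication is \emph{false} if $[\omega_\alpha]$ is read as a bare cohomology class with $P_\alpha$ taken up to arbitrary translation inside $\pi^{-1}(\alpha)$: take $\mathbb{P}^1\times\mathbb{P}^1$ with $P=[0,1]^2$, trivial weights, $\mathbb{T}_1$ the first factor, $\alpha=\tfrac12$; the reduction is $\mathbb{P}^1$ with an integral symplectic class, yet $\pi^{-1}(\tfrac12)\cap\mathbb{Z}^2=\emptyset$. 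What makes the proposition true, and what the paper's appeal to Remark \ref{rmk_integrality-condition} is implicitly using, is that $[\omega_\alpha]$ comes with its canonical normalization from the residual $\mathbb{T}/\mathbb{T}_1$-moment map, whose image is the slice $P_\alpha=\pi^{-1}(\alpha)\cap P$ \emph{at its actual position} in $\mathfrak{t}^\ast$ (equivalently, the support numbers $l_\rho$ of $P$ restricted to the slice fix the affine subspace $h_{[\omega_\alpha]}+{}^t\!N_{\Sigma_\alpha}(\mathfrak{t}/\mathfrak{t}_1)^\ast$); only translations by the character lattice $\mathfrak{t}_1^\perp\cap\mathfrak{t}^\ast_{\mathbb{Z}}$ of $\mathbb{T}/\mathbb{T}_1$ are permitted, and these do not change whether $\pi^{-1}(\alpha)$ meets $\mathfrak{t}^\ast_{\mathbb{Z}}$. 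The same normalization is needed to make your ``(i) $\Rightarrow$ (ii) is formal by naturality'' correct: $p^\ast$ on $H^2$ of a principal circle bundle kills the Euler class, so $p^\ast c_1(\mathcal{L}_\alpha)=p^\ast[\omega_\alpha]$ alone does not determine $c_1(\mathcal{L}_\alpha)$; you must use that the descent in (i) is $\mathbb{T}$-equivariant (cf.\ Theorem \ref{thm_qr-rq}(iii) and Remark \ref{rmk_BS-linebundle}), which pins down $\mathcal{L}_\alpha$ and its class. With that equivariant reading made explicit throughout, your plan becomes a complete proof and coincides with the paper's.
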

\begin{proof}
  By Theorem \ref{theorem_prequantizability} we can choose a translate of $P$ such that
  \[
    P = \conv\{m_\sigma\},
    \qquad \text{ and } \qquad \mathcal{L} \cong \mathcal{L}_{\{m_\sigma\}} .
  \]
For the restriction of $\mathcal{L}$ to $\mu_1^{-1}(\alpha)$ to descend to an orbi-line bundle, all stabilizers (in $\mathbb{T}_1$) at points $m\in\mu_1^{-1}(\alpha)$ have to act trivially on the fiber of $\mathcal{L}$ at $m$, which is achieved under condition (iii) by multiplying the $\mathbb{T}_1$-action by the character $\chi_\alpha^{-1}$. We already had noted that (ii) and (iii) are equivalent in Remark \ref{rmk_integrality-condition}.
\end{proof}
The subvarieties $\mu^{-1}(\alpha)$ satisfying the condition of the proposition are called \emph{Bohr--Sommerfeld leaves}; occasionally we may also term the corresponding $\alpha$ ``Bohr--Som\-mer\-feld'', or ``BS'' for short. It should be noted that they are determined by the rational Chern class of $[\omega_{P,w}]$ alone, independently of the ``pre-quantization''.
  
\begin{remark}\label{rmk_BS-linebundle} The line bundle $\mathcal{L}_\alpha$ in the statement is generally \emph{not} unique up to isomorphism, as clearly any torsion element in the orbifold Picard group has to pull back to the trivial line bundle on the manifold $\mu^{-1}(\alpha)$. This fact is quite relevant to the ``quantization commutes with reduction'' problem, as we have seen before that changing orbi-line bundles by torsion can change the dimension of their space of sections. In greater generality this issue is settled by including the information contained in the connection, but in our toric setting there is a unique choice determined by the requirement of $\mathbb{T}$-equivariance, as the proof shows.
\end{remark}

Summing up, we have almost finished the proof of our final theorem:
\begin{theorem}\label{thm_qr-rq}
  Suppose given a symplectic toric orbifold $\mathcal{X}_{P,w}$ and consider the restriction of the torus action to a subtorus $\mathbb{T}_1 \subset \mathbb{T}$. Then for regular values $\alpha \in P_1 = \mu_1 P$ of the moment map $\mu_1 : \mathcal{X}_{P,w} \to \mathfrak{t}_1^\ast$ of the $\mathbb{T}_1$-action,
  \begin{itemize}
  \item[(i)] the reduced orbifold $\mathcal{X}_\alpha := \mu_1^{-1}(\alpha)/\mathbb{T}_1$ is determined (as a symplectic $\mathbb{T}/\mathbb{T}_1$-toric orbifold) by the intersection of the moment polytope $P$ with the fiber of the canonical projection $\pi: \mathfrak{t}^\ast \to \mathfrak{t}_1^\ast$ over $\alpha$,
    \[
     P_\alpha := P \cap \pi^{-1}(\alpha) ,
    \]
    and weights $w^\alpha$ determined by the condition that for any facet $F_\alpha \subset P_\alpha$ and corresponding facet $F\subset P$ such that $F_\alpha = F\cap P_\alpha$
    \[
     w_F \nu_F + \left( \mathfrak{t}_1 \right)_{\mathbb{Z}} = w_{F_\alpha}^\alpha \nu_{F_\alpha} .
    \]
  \item[(ii)] The reduced symplectic form on $\mathcal{X}_\alpha$ is represented by an orbi-line bundle if and only if
    \[
     \pi^{-1}(\alpha) \cap \mathfrak{t}_{\mathbb{Z}}^\ast \neq \emptyset .
    \]
    These values of $\alpha$ and symplectic reductions are called ``Bohr--Sommerfeld''.
  \item[(iii)] In this case, for any choice of orbi-line bundle $\mathcal{L}_{\{m_\sigma\}} \to \mathcal{X}_{P,w}$ representing the class of the symplectic form $\omega_{P,w}$, there is a unique orbi-line bundle $\mathcal{L}_\alpha$ on the Bohr--Sommerfeld fiber $\mathcal{X}_\alpha$ descending in a $\mathbb{T}$-equivariant manner from $\mu_1^{-1}(\alpha)$.
  \item[(iv)] With these notations, there is a decomposition
  \[
  H^0(\mathcal{X}_{P,w},\mathcal{L}_{\{m_\sigma\}}) \cong
  \bigoplus_{\stackrel{\alpha \in P_1}{Bohr-Sommerfeld}}
  H^0(\mathcal{X}_\alpha,\mathcal{L}_\alpha) .
  \]
\end{itemize}
\end{theorem}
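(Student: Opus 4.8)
The plan is to obtain parts (i)--(iii) directly from the two propositions immediately preceding the theorem, together with Remark~\ref{rmk_BS-linebundle}, and to reserve the real work for the direct-sum decomposition in (iv). For the uniqueness clause in (iii) I would add one observation: by Theorem~\ref{theorem_Picorb} a $\mathbb{T}/\mathbb{T}_1$-linearized orbi-line bundle on $\mathcal{X}_\alpha\cong\mathcal{X}_{P_\alpha,w^\alpha}$ is pinned down by the characters attached to its torus-fixed points, and the descent recipe of the preceding proposition --- restrict $\mathcal{L}$ to the $\mathbb{T}$-invariant fiber $\mu_1^{-1}(\alpha)$, twist the $\mathbb{T}_1$-action by the canonical character $\chi_\alpha^{-1}$, then quotient by $\mathbb{T}_1$ --- determines each of these characters, so there is no residual freedom; the non-uniqueness of Remark~\ref{rmk_BS-linebundle} is precisely the loss of this equivariant rigidity.

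For (iv) the strategy is to compute both sides with Theorem~\ref{theorem_h0} and match $\mathbb{T}$-weights. After translating $P$ so that $\mathcal{L}=\mathcal{L}_{\{m_\sigma\}}$ with $P=\conv\{m_\sigma\}$ (Theorem~\ref{theorem_prequantizability}), that theorem identifies $H^0(\mathcal{X}_{P,w},\mathcal{L})$ with $\bigoplus_{m\in P\cap\mathfrak{t}^\ast_{\mathbb{Z}}}\mathbb{C}\,\widetilde{s}_{\chi_m}$, each weight occurring once. The $\mathbb{T}_1$-weight of $\widetilde{s}_{\chi_m}$ is $m|_{\mathfrak{t}_1}=\pi(m)$, so grouping the summands according to this weight decomposes $H^0$ into its $\mathbb{T}_1$-isotypic pieces, indexed by $\alpha\in\pi(P\cap\mathfrak{t}^\ast_{\mathbb{Z}})$; by condition (ii) these $\alpha$ are exactly the Bohr--Sommerfeld values, and by the standing hypothesis the reductions $\mathcal{X}_\alpha$ at these values are orbifolds. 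It then remains to match the $\alpha$-block $\bigoplus_{m\in P\cap\pi^{-1}(\alpha)\cap\mathfrak{t}^\ast_{\mathbb{Z}}}\mathbb{C}\,\widetilde{s}_{\chi_m}$ with $H^0(\mathcal{X}_\alpha,\mathcal{L}_\alpha)$.

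To carry this out I would fix a lattice point $m_0\in\pi^{-1}(\alpha)\cap\mathfrak{t}^\ast_{\mathbb{Z}}$ (nonempty exactly because $\alpha$ is Bohr--Sommerfeld) and use translation by $-m_0$ to identify the affine slice $\pi^{-1}(\alpha)$ with $\ker\pi=\mathfrak{t}_1^\perp=(\mathfrak{t}/\mathfrak{t}_1)^\ast$, carrying $P_\alpha=P\cap\pi^{-1}(\alpha)$ to $P_\alpha-m_0$ and $\pi^{-1}(\alpha)\cap\mathfrak{t}^\ast_{\mathbb{Z}}$ bijectively onto the character lattice $\mathfrak{t}_1^\perp\cap\mathfrak{t}^\ast_{\mathbb{Z}}$ of $\mathbb{T}/\mathbb{T}_1$. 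The point to verify carefully is that $P_\alpha-m_0$ is the Newton polytope of the descended bundle $\mathcal{L}_\alpha$: extending the twisting character to $\chi_{m_0}^{-1}$ on $\mathbb{T}$ shifts the $\mathbb{T}$-moment polytope from $P$ to $P-m_0$ and makes the $\mathbb{T}_1$-moment map vanish on $\mu_1^{-1}(\alpha)$, so the $\mathbb{T}/\mathbb{T}_1$-moment polytope of the quotient is the slice $(P-m_0)\cap\mathfrak{t}_1^\perp=P_\alpha-m_0$. Granting this, Theorem~\ref{theorem_h0} applied to $\mathcal{X}_\alpha$ --- whose section count, note, is insensitive to the weights $w^\alpha$ and depends only on the character lattice of $\mathbb{T}/\mathbb{T}_1$ --- yields $H^0(\mathcal{X}_\alpha,\mathcal{L}_\alpha)\cong\bigoplus_{m'\in(P_\alpha-m_0)\cap\mathfrak{t}_1^\perp\cap\mathfrak{t}^\ast_{\mathbb{Z}}}\mathbb{C}\,\widetilde{s}_{\chi_{m'}}$, which matches the $\alpha$-block under $m'=m-m_0$.

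The main obstacle is exactly this last bookkeeping: tracking the linearization datum $\{m_\sigma\}$ of $\mathcal{L}$ through restriction to $\mu_1^{-1}(\alpha)$, the $\chi_\alpha^{-1}$-twist, and the quotient by $\mathbb{T}_1$, and confirming that the outcome is the bundle $\mathcal{L}_\alpha$ on $\mathcal{X}_{P_\alpha,w^\alpha}$ with Newton polytope the translated slice. Once that is in place I would finish by observing that the weight-matching isomorphism is induced by a genuine map of vector spaces --- restriction of global sections to the $\mathbb{T}$-invariant subvariety $\mu_1^{-1}(\alpha)$ followed by descent, with inverse given by extending a $\mathbb{T}_1$-eigensection (necessarily the monomial $\chi_m$ with $m\in P_\alpha\subset P$, hence regular on all of $\mathcal{X}_{P,w}$) --- so that assembling these isomorphisms over all Bohr--Sommerfeld $\alpha$ produces the asserted decomposition and not merely an equality of dimensions.
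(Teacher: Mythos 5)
Your proposal is correct and follows essentially the same route as the paper: parts (i)--(iii) are delegated to the two preceding propositions and Remark \ref{rmk_BS-linebundle}, and part (iv) is obtained by applying Theorem \ref{theorem_h0} to both sides and matching lattice points of $P$ grouped by their $\mathbb{T}_1$-weight, i.e.\ by the isotypical decomposition under $\mathbb{T}_1$. Your write-up merely spells out in more detail (the translation by $m_0$ identifying $P_\alpha$ with the Newton polytope of $\mathcal{L}_\alpha$, and the realization of the isomorphism by restriction of eigensections) what the paper compresses into ``both vector spaces have bases indexed by all integer points in $P$.''
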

\begin{proof}
 Only statement (iv) remains to be proved, which however follows immediately from the previous Proposition and Theorem \ref{theorem_h0}, as both vector spaces have bases indexed by all integer points in $P$. We furthermore see that the right-hand side corresponds to the isotypical decomposition of the restriction of the toric action to $\mathbb{T}_1$.
\end{proof}

\begin{example}
  Consider the restriction of the standard action $\mathbb{T}^3 \circlearrowright \mathbb{P}^3$, polarized by $\mathcal{O}_{\mathbb{P}^3}(3)$, to the circle generated by the vector $(4,3,6) \in \mathbb{R}^3 \cong \mathfrak{t}$.
  
\begin{figure}[!htb]
 \centering
\subfloat[LoF 7a][ \\ Moment map of restricted action]{
\label{fig_7a} 
\begin{tikzpicture}
\node[anchor=center,inner sep=0] (image7a) at (0,0) {\includegraphics[width=0.33\textwidth]{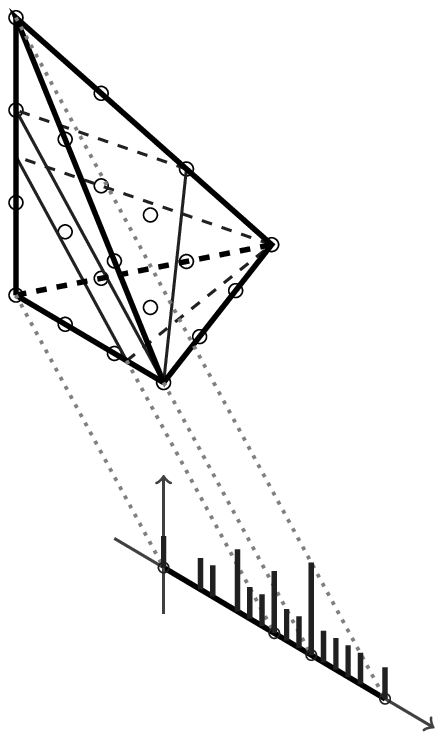}};
\begin{scope}[x={(image7a.east)-(image7a.west)},y={(image7a.north)-(image7a.south)}]
  \node[] at (-1.25,0.5){$\mathfrak{t}^\ast \supset P$};
  \node[] at (0.15,-0.85){$\mathfrak{t}^\ast_1 \supset P_1$};
  \node[] at (-0.55,-0.35){$h^0(\mathcal{L}_\alpha)$};
\end{scope}
\end{tikzpicture}}%
\hspace{2em}
\subfloat[LoF 7b][\\Weighted fans of reductions]{
\label{fig_7b} 
\begin{tikzpicture}
\node[anchor=center,inner sep=0] (image7b) at (0,0) {\includegraphics[width=0.166\textwidth]{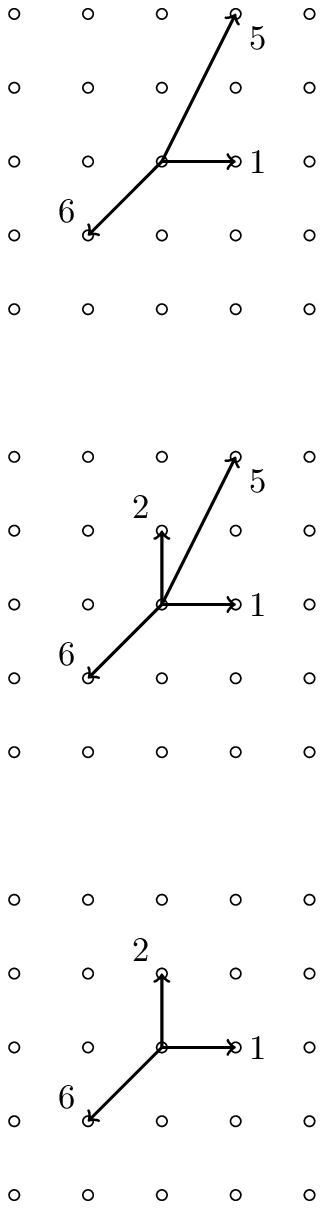}};
\begin{scope}[x={(image7b.east)+15-(image7b.west)},y={(image7b.north)-(image7b.south)}]
  \node[] at (2,0.75){$0 < \mu_1 \leq 9$};
  \node[] at (2,0){$9 < \mu_1 < 12$};
  \node[] at (2,-0.75){$12 \leq \mu_1 < 18$};
\end{scope}
\end{tikzpicture}}%
\caption{Restricting the action on $\mathbb{P}^3$ to a circle}
\label{fig_reduction_P3}
\end{figure}

The moment map $\mu_1$ of the restricted action has image $P_1 = [0,18] \subset \mathbb{R} \cong \mathfrak{t}^\ast_1$ and critical values $0,9,12$ and $18$. The ``critical slices'' of the moment map and the weighted fans of the reductions on each component of regular values are shown in Figure \ref{fig_reduction_P3}.

\begin{figure}[!htb]
 \centering
\begin{tikzpicture}
\node[anchor=center,inner sep=0] (image8) at (0,0) {\includegraphics[width=0.8\textwidth]{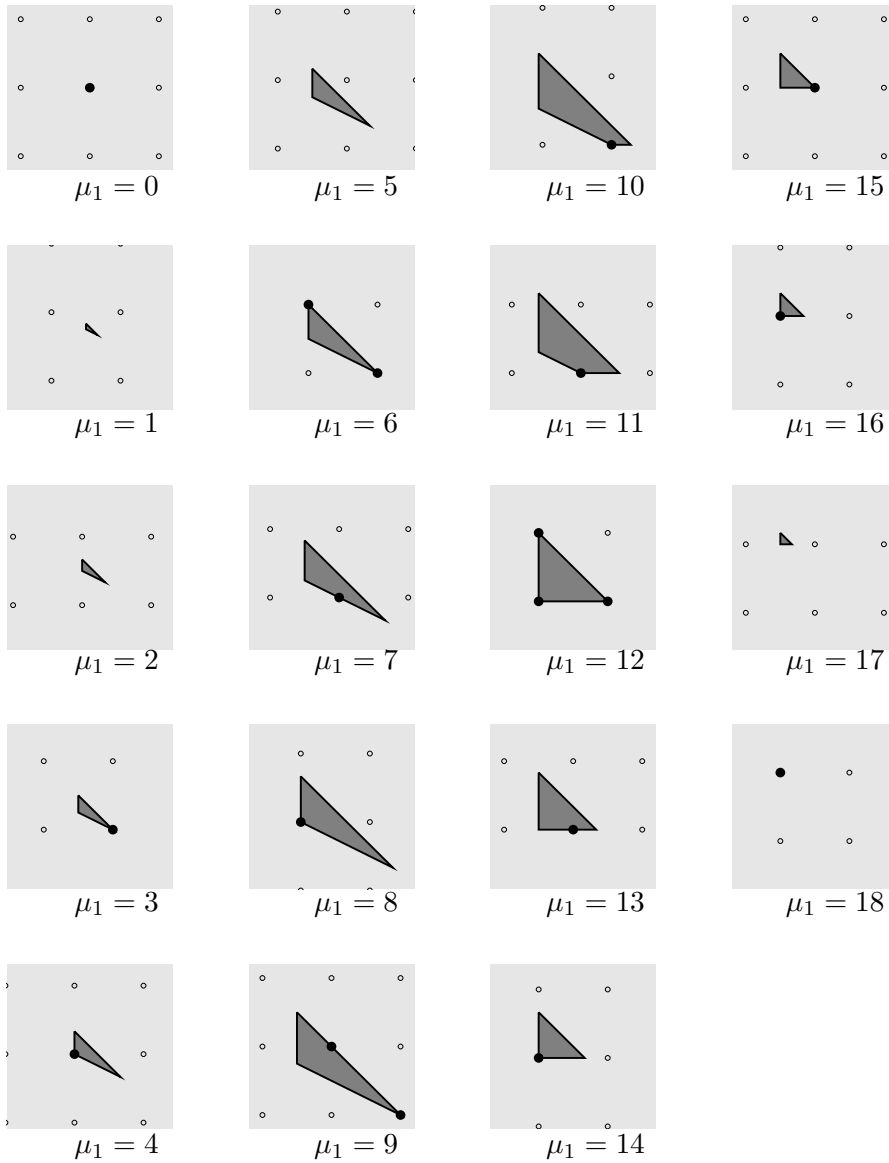}};
\begin{scope}[x={(image8.east)-(image8.west)},y={(image8.north)-(image8.south)}]
  \node[] at (-0.75,0.67){$\mu_1 = 0$};
  \node[] at (-0.75,0.25){$\mu_1 = 1$};
  \node[] at (-0.75,-0.17){$\mu_1 = 2$};
  \node[] at (-0.75,-0.6){$\mu_1 = 3$};
  \node[] at (-0.75,-1.03){$\mu_1 = 4$};
  \node[] at (-0.215,0.67){$\mu_1 = 5$};
  \node[] at (-0.215,0.25){$\mu_1 = 6$};
  \node[] at (-0.215,-0.17){$\mu_1 = 7$};
  \node[] at (-0.215,-0.6){$\mu_1 = 8$};
  \node[] at (-0.215,-1.03){$\mu_1 = 9$};
  \node[] at (0.32,0.67){$\mu_1 = 10$};
  \node[] at (0.32,0.25){$\mu_1 = 11$};
  \node[] at (0.32,-0.17){$\mu_1 = 12$};
  \node[] at (0.32,-0.6){$\mu_1 = 13$};
  \node[] at (0.32,-1.03){$\mu_1 = 14$};
  \node[] at (0.855,0.67){$\mu_1 = 15$};
  \node[] at (0.855,0.25){$\mu_1 = 16$};
  \node[] at (0.855,-0.17){$\mu_1 = 17$};
  \node[] at (0.855,-0.6){$\mu_1 = 18$};
\end{scope}
\end{tikzpicture}%
\caption{Polytopes of the Bohr--Sommerfeld orbi-line bundles (weights ommited, cf. the fans in Figure \ref{fig_7b})}\label{fig_reduced_polytopes}
\end{figure}

In this case the symplectic reductions are orbifolds even for the non-regular values $9$ and $12$, as all two-dimensional toric singularities are orbifold singularities.

The Bohr--Sommerfeld leaves are precisely the pre-images of the integers in $P_1$, and we show the moment polytopes of the symplectic reduction corresponding to the orbi-line bundles of Remark \ref{rmk_BS-linebundle} in Figure \ref{fig_reduced_polytopes}. In particular, several of the Bohr--Sommerfeld fibers with ``small'' Chern classes carry orbi-line bundles without holomorphic sections (namely those corresponding to $\mu_1 = 1,2,5,17$).
\end{example}

We hope to come back to quantizations in mixed polarizations in greater generality, and the aspects of restrictions of Hamiltonian torus actions linked to metric degenerations of the total space in particular in future work.

\section*{Acknowledgements}

The authors would like to thank Rosa Sena--Dias for discussions; this work was partially supported by FCT/Portugal through the projects UID/\allowbreak{}MAT/\allowbreak{}04459/\allowbreak{}2013 and PTDC\allowbreak{}/MAT-\allowbreak{}GEO/\allowbreak{}3319/\allowbreak{}2014.


\bibliographystyle{halpha-abbrv}
\bibliography{toric}

\end{document}